\newtheorem*{rep@theorem}{\rep@title}
\newcommand{\newreptheorem}[2]{%
\newenvironment{rep#1}[1]{%
 \def\rep@title{#2 \ref{##1}}%
 \begin{rep@theorem}}%
 {\end{rep@theorem}}}
\let\oldtocsection=\tocsection
\let\oldtocsubsection=\tocsubsection
\let\oldtocsubsubsection=\tocsubsubsection
\renewcommand{\tocsection}[2]{\hspace{0em}\oldtocsection{#1}{#2}\vspace*{-1.5mm}}
\renewcommand{\tocsubsection}[2]{\hspace{1em}\oldtocsubsection{#1}{#2}\vspace*{-1.5mm}}
\renewcommand{\tocsubsubsection}[2]{\hspace{2em}\oldtocsubsubsection{#1}{#2}\vspace*{-1.5mm}}
\newtheorem{theorem}{Theorem}[section]
\theoremstyle{definition}\newtheorem*{definition}{Definition}
\theoremstyle{definition}\newtheorem{proposition}[theorem]{Proposition}
\theoremstyle{definition}\newtheorem{lemma}[theorem]{Lemma}
\theoremstyle{definition}\newtheorem{corollary}[theorem]{Corollary}
\theoremstyle{definition}
\theoremstyle{remark}\newtheorem{remark}[theorem]{Remark}
\definecolor{steel}{HTML}{396a93}
\newcommand{\IM}{\text{\normalfont Imm}^1}
\newcommand{\SL}{\mathcal{L}}
\newcommand{\SC}{\mathcal{C}}
\newcommand{\SI}{\mathcal{I}}
\newcommand{\SB}{\mathcal{B}}
\newcommand{\dst}{\text{\normalfont d}_{L^2(ds)}}
\newcommand{\vn}[1]{\lVert{#1}\rVert}
\def\D{\mathcal{D}}
\def\R{\mathbb{R}}
\def\N{\mathbb{N}}
\def\C{\mathcal{C}\!h}
\def\SQ{\mathcal{Q}}
\def\mcs{\mathcal{S}}
\def\h1{{H^1}}
\def\l2{{L^2}}
\renewcommand{\S}{\mathbb{S}}
\newcommand{\norm}[1]{\vn{#1}}
\newcommand{\enorm}[1]{{\left | {#1}\right |}}
\newcommand{\ip}[1]{{\left \langle {#1} \right \rangle}}
\DeclareMathOperator{\imm}{\text{\normalfont Imm}}
\DeclareMathOperator{\grad}{grad}
\DeclareMathOperator{\diff}{Diff}
\def\L{\mathcal L}
\DeclareMathOperator{\sign}{sgn}
\begin{document}

\title{On the $H^1(ds)$-gradient flow for the length functional}

\author[P. Schrader]{Philip Schrader}
\address{JSPS International Research Fellow \\ Mathematical Institute\\ Tohoku University\\
Sendai 980-8578\\
Japan}
\email{philschrad@gmail.com}

\author[G. Wheeler]{Glen Wheeler}
\address{Institute for Mathematics and its Applications, School of Mathematics and Applied Statistics\\
University of Wollongong\\
Northfields Ave, Wollongong, NSW $2500$\\
Australia}
\email{glenw@uow.edu.au}

\author[V.-M. Wheeler]{Valentina-Mira Wheeler}
\address{Institute for Mathematics and its Applications, School of Mathematics and Applied Statistics\\
University of Wollongong\\
Northfields Ave, Wollongong, NSW $2500$\\
Australia}
\email{vwheeler@uow.edu.au}

\thanks{This research was supported in part by
Discovery Project DP180100431 and DECRA DE190100379
of the Australian Research Council, and an International Postdoctoral Research
Fellowship of the Japan Society for the Promotion of Science.}

\subjclass[2020]{53E99, 34C40, 58B20}
\maketitle

\begin{abstract}
In this article we consider the length functional defined on the space of
immersed planar curves.
The $L^2(ds)$ Riemannian metric gives rise to the curve shortening flow as the
gradient flow of the length functional.
Motivated by the vanishing of the $L^2(ds)$ Riemannian distance, we consider
the gradient flow of the length functional with respect to the
$H^1(ds)$-metric.
Circles with radius $r_0$ shrink with $r(t) = \sqrt{W(e^{c-2t})}$ under the
flow, where $W$ is the Lambert $W$ function and $c = r_0^2 + \log r_0^2$.
We conduct a thorough study of this flow, giving existence of eternal solutions
and convergence for general initial data, preservation of regularity in various
spaces, qualitative properties of the flow after an appropriate rescaling, and
numerical simulations.

\end{abstract}

\tableofcontents 

\section{Introduction}

Consider the length functional: 
\begin{equation}\label{eq:length}
\L(\gamma):=\int_{\S^1} \enorm{\gamma'}du \end{equation}
defined on $\IM$, the space of (once) differentiable curves $\gamma:\S^1\to \R^2$ with
$|\gamma'|\ne0$.
The definition of a gradient of $\SL$
requires a notion of direction on $\IM$,
that is an inner product or more generally a Riemannian metric ${\langle \cdot,
\cdot \rangle}$.
The gradient is then characterised by $ d\SL={\langle\grad \SL, \cdot \rangle}$. 

To calculate the (Gateaux) derivative $d\L $ take a variation $\gamma
:(-\varepsilon,\varepsilon)\times \S^1 \to \R^2$, ${\left . \partial_\varepsilon
\gamma \right|}_{\varepsilon=0}=V$ and calculate
\begin{equation}
\begin{split}
d\L_\gamma V = {\left .  \int_{\S^1}\frac{ \langle\partial_\varepsilon \partial_u \gamma , \partial_u\gamma\rangle  }{|\partial_u \gamma|} du \right |}_{\varepsilon=0}
 \label{eq:dlength}
=-\int \langle T_s,V\rangle\, ds 
=-\int k\ip{N,V}\,ds
\end{split}
\end{equation}
 Here the inner product is the Euclidean one, $u$ is the given parameter along $\gamma=(x,y)$,   $s$ is the Euclidean arc-length parameter, $T=(x_s,y_s)$ is the unit
tangent vector, $k$ the curvature scalar, and $N
= (-y_s,x_s)$ is the normal vector. 

As for the inner product or Riemannian metric in $\IM$ we might choose either of 
\begin{align*}
\ip{v,w}_{L^2} :=\int_{ \S^1} \ip{v,w} du\,,\quad\text{or}\quad
	{\langle v,w\rangle}_{L^2(ds)} := \int_{\S^1}\ip{v,w} \, ds 
\end{align*}
for $v,w$ vector fields along $\gamma$. The former is simpler, but
from the point of view
of geometric analysis (and in particular geometric flows) the latter is preferable
because it is invariant under reparametrisation of $\gamma$, and this invariance
carries through to the corresponding gradient flow (see Section
\ref{symmetries}).  Note that
the $L^2(ds)$ product is in fact a Riemannian metric because it depends on the
base point $\gamma$ through the measure $ds$.

\subsection{The gradient flow for length in $(\IM,L^2(ds))$}
Indeed \eqref{eq:dlength} shows that the gradient flow of length in the $L^2(ds)$ metric is the famous curve shortening flow proposed by Gage-Hamilton \cite{GH}:
\begin{equation}\label{eq:csf}
	X_t = X_{ss} = \kappa = kN 
\end{equation}
where $X:\S\times(0,T)\rightarrow\R^2$ is a one-parameter family of immersed regular closed curves, $X(u,t) = (x(u,t),y(u,t))$ and $s,T,k,N$ are as above.

The curve shortening flow moves each point along a curve in the direction of
the curvature vector at that point.
Concerning local and global behaviour of the flow, we have:

\begin{theorem}[Angenent \cite{A}, Grayson \cite{Gray}, Gage-Hamilton \cite{GH}, Ecker-Huisken \cite{EH}]
\label{TMcsf}
Consider a locally Lipschitz embedded curve $X_0$.
There exists a curve shortening flow $X:\S\times(0,T)\rightarrow\R^2$ such that $X(\cdot,t)\searrow X_0$ in the $C^{1/2}$-topology.
The maximal time of smooth existence for the flow is finite, and as $t\nearrow T$, $X(\cdot,t)$ shrinks to a point $\{p\}$.
The normalised flow with length or area fixed exists for all time.
It becomes eventually convex, and converges exponentially fast to a standard round circle in the smooth topology.
\end{theorem}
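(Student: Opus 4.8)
Since this statement is a synthesis of several deep classical results, my plan is to assemble the standard pieces rather than reprove each from scratch, indicating where the genuine difficulty lies. The natural starting point is short-time existence and uniqueness for smooth embedded initial data. Equation \eqref{eq:csf} is only degenerate parabolic, because the tangential part of $X_{ss}$ carries no genuine geometric motion; the standard remedy is the DeTurck trick, or equivalently to observe that the purely normal flow $X_t = kN$ differs from \eqref{eq:csf} only by a tangential reparametrisation. Either device converts the geometric evolution into an honestly quasilinear parabolic system, whence short-time existence, uniqueness and instantaneous smoothing follow from standard parabolic theory.

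Next I would establish finite-time extinction by comparison. The avoidance principle for \eqref{eq:csf} keeps disjoint curves disjoint, so enclosing $X_0$ in a large circle of radius $R$, whose radius evolves by $r(t)=\sqrt{R^2-2t}$ and vanishes at $t=R^2/2$, forces the maximal smooth time $T$ to be finite. Equivalently, for an embedded curve the enclosed area satisfies $A'(t)=-\int k\,ds=-2\pi$ by Gauss--Bonnet, so $A(t)=A_0-2\pi t$ and smooth existence must cease by $t=A_0/(2\pi)$. A rescaling analysis near $t=T$, based on Huisken's monotonicity formula and the type I/type II classification of singularities, then identifies the terminal singularity with collapse to a single point $\{p\}$.

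The main obstacle — and where essentially all the analytic difficulty is concentrated — is Grayson's theorem: that an embedded curve stays embedded and becomes convex \emph{before} $T$, rather than developing an interior curvature singularity. Here I would invoke the modern streamlined arguments in place of Grayson's original one, namely Angenent's monotonicity of the number of self-intersections and sign changes of $k$, together with Huisken's distance-comparison (chord--arc) estimate. The chord--arc bound rules out type II singularities and shows that, as $t\nearrow T$, the suitably rescaled curves converge to the round shrinking circle.

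Finally the convexity and asymptotic claims follow from Gage--Hamilton. Once the curve is convex I would reparametrise by the tangent angle $\theta$ and study $k(\theta,t)$, which satisfies the reaction--diffusion equation $k_t = k^2 k_{\theta\theta}+k^3$. Passing to the length- or area-normalised flow, one shows the isoperimetric ratio tends to that of a circle and controls the oscillation $\max k-\min k$, which yields $C^0$ and then, by parabolic bootstrapping, smooth and exponentially fast convergence to a standard round circle. For merely locally Lipschitz initial data I would approximate $X_0$ by smooth embedded curves, run the flow, and use instantaneous parabolic smoothing together with uniform interior estimates to extract a limiting flow; the convergence $X(\cdot,t)\searrow X_0$ in $C^{1/2}$ as $t\searrow 0$ then follows from the interior gradient and Hölder estimates for \eqref{eq:csf}.
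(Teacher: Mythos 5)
The paper does not actually prove Theorem \ref{TMcsf}: it is stated as a cited classical result, and the remark that follows it only distributes credit --- Angenent for existence from locally Lipschitz data, Ecker--Huisken for upgrading the initial-time convergence to $C^{1/2}$ via interior estimates, Grayson for eventual convexity of embedded curves, and Gage--Hamilton for the convergence of convex curves to round points. Your outline assembles precisely these ingredients (plus the standard avoidance/area argument for finite extinction time and the modern distance-comparison route to Grayson's theorem) in a way that is correct and consistent with the paper's attributions, so both treatments ultimately defer the substantive work to the same literature.
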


\begin{remark}
In the theorem above, we make the following attributions.
Angenent \cite{A} showed that the curve shortening flow exists with locally
Lipschitz data where convergence as $t\searrow0$ is in the continuous topology.
Ecker-Huisken's interior estimates in \cite{EH} extend this to the $C^{1/2}$-topology.
Gage-Hamilton \cite{GH} showed that a convex curve contracts to a round point,
whereas Grayson \cite{Gray} proved that any embedded curve becomes eventually
convex.
There are a number of ways that this can be proved; for instance we also
mention Huisken's distance comparison \cite{Hcsf} and the novel optimal
curvature estimate method in \cite{ABryan}.
\end{remark}

The curve shortening flow has been extensively studied and found many
applications. We refer the interested reader to the recent book
\cite{Andrews:2020aa}.

\subsection{Vanishing Riemannian distance in $(\IM,L^2(ds))$}
Every Riemannian metric induces a distance function defined as the infimum of
lengths of paths joining two points. For finite dimensional manifolds the
resulting path-metric space has the same topology as the manifold, but for
infinite-dimensional manifolds it is possible that the path-metric space
topology is weaker (so-called \emph{weak Riemannian metrics}). Furthermore, as
famously demonstrated by Michor-Mumford in \cite{MR2201275}, it is possible
that the Riemannian distance is actually trivial. 

The example given by Michor-Mumford is the space $(\IM,L^2(ds))$ quotient by the diffeomorphism
group\footnote{$\text{Diff}(\S^1)$ is the regular Lie group
of all diffeomorphisms $\phi:\S^1\rightarrow\S^1$ with connected components
$\text{Diff}^+(\S^1)$, $\text{Diff}^-(\S^1)$ given by orientation preserving
and orientation reversing diffeomorphisms respectively.} $\text{Diff}(\S^1)$
 of $\S^1$.
We call this space $\SQ = \text{Imm}^1 / \text{Diff}(\S^1)$.
While $\text{Imm}^1$ is an open subset of $C^1(\S^1,\R^2)$ and so $(\text{Imm}^1,L^2(ds))$ is a Riemannian manifold, the action of $\text{Diff}(\S^1)$ is not free (see
\cite[Sections 2.4 and 2.5]{MR2201275}), and so the quotient $\SQ$ is not a
manifold but is an orbifold.

\begin{reptheorem}{TMMM}[Michor-Mumford \cite{MR2201275}]
The Riemannian distance in $(\SQ, L^2(ds))$ is trivial.
\end{reptheorem}
This surprising fact is shown by an explicit construction in
\cite{MR2201275} of a path between orbits with arbitrarily small
$L^2(ds)$-length, which for the benefit of the reader we briefly recall in
Section \ref{SNawesome}.
A natural question arising from Michor-Mumford's work is if the induced metric
topology on the Riemannian manifold $(\IM,L^2(ds))$ is also trivial.
This was confirmed in \cite{Bauer:2012aa} as a special case of a more general result. 

\begin{reptheorem}{TMawesome}
The Riemannian distance in $(\IM, L^2(ds))$ is trivial.
\end{reptheorem}

Here we give a different proof of Theorem \ref{TMawesome} using a detour through small curves. The setup and proof is given in detail in Section \ref{SNawesome}.

We can see from \eqref{eq:dlength} that the curve shortening flow
\eqref{eq:csf} is indeed the $L^2(ds)$-gradient flow of the length functional
in $\IM$, not the quotient $\SQ$.
Theorem \ref{TMawesome} yields that the underlying metric space that the curve
shortening flow is defined upon is \emph{trivial}, and therefore
this background metric space structure is useless in the analysis
of the flow.

While it could conceivably be true that the triviality of the Riemannian metric topology
on $\IM, L^2(ds)$  is important for the validity of Theorem
\ref{TMcsf} and the other nice properties that the curve shortening flow
enjoys, one naturally wonders if this is in fact the case: What do gradient
flows of length look like on $\IM$, with other choices of Riemannian metric?

\subsection{The gradient flow for length in $(\IM,H^1(ds))$}
We wish to choose a metric that (a) yields a non-trivial Riemannian distance; and (b) produces a non-trivial gradient flow.
One way of doing this (similar to that described by Michor-Mumford \cite{MR2201275}) is to view the $L^2(ds)$ Riemannian metric as an element on the Sobolev scale of metrics (as the $H^0(ds)$ metric).
The next most simple choice is therefore the $H^1(ds)$ metric:
\begin{equation}\label{eq:h1dsip}
\ip{v,w}_{H^1(ds)}:=\ip{v,w}_{L^2(ds)}+\ip{v_s,w_s}_{L^2(ds)}
\end{equation}
Note that we have set the parameter $A$ from \cite[Section 3.2, Equation (5)]{MR2201275} to $1$ and we are considering the full space, not the quotient.
In contrast to the $L^2(ds)$ case, the $H^1(ds)$ distance is non-trivial \cite{MR2201275}.

\begin{remark}
There is an expanding literature on the multitude of
alternative metrics proposed for quantitative comparison of shapes in imaging
applications (see for example \cite{bauer2014constructing,bauer2015metrics,epifanio2020new,kurtek2011elastic,mennucci2019designing,MR2201275,nardi2016geodesics,shah2013h2,Sundaramoorthi2007,tumpach2017quotient,younes1998computable,younes2018hybrid,younes2008metric}).
It might be interesting to compare the dynamical properties of the gradient
flows of length on $\IM$ with respect to other Riemannian metrics.  We note also that the study of Sobolev type gradients is far from new. We mention the comprehensive book on the topic by Neuberger \cite{Neuberger:2010aa}, and the flow studied in \cite{Sundaramoorthi2007} for applications to active contours is closely related to the one we study here. A recurring theme seems to be better numerical stability for the Sobolev gradient compared to its $L^2$ counterpart. However, in this article we focus on analytical aspects. 
\end{remark}

The steepest descent $H^1(ds)$-gradient flow for length (called the
\emph{$H^1(ds)$ curve shortening flow}) on maps in $H^1(\S^1,\R^2)$ is a
one-parameter family of maps $X:\S^1\times I\rightarrow\R^2$ ($I$ an interval
containing zero) where for each $t$, $X(\cdot,t)\in H^1(\S^1,\R^2)$ and
\begin{equation}\label{h1flow}
	 \partial_t X(s,t)
	= -\Big(\text{grad}_{H^1(ds)}\L_{X(\cdot,t)}\Big)(s)
	= -X(s,t)-\int_0^\L X(\tilde s,t)G(s,\tilde s)d\tilde s
\end{equation}
where $G$ is given by
\begin{equation*}
G(s,\tilde s)=\frac{\cosh\left (|s-\tilde s|-\tfrac{\L}{2}\right)}{2\sinh(-\tfrac{\L}{2})} \quad \text{ for } 0\leq s,\tilde s \leq \L
\,.
\end{equation*}
Our derivation of this is contained in Section 4.1.

An instructive example of the flow's behaviour is exhibited by taking any standard round circle as initial data.
A circle will shrink self-similarly to a point under the flow, taking infinite time to do so.
The circle solutions can be extended uniquely and indefinitely in negative time as well, that is, they are eternal
solutions (see also Section 4.1). 

We set $\SC$ to be the space of constant maps.
While \eqref{h1flow} does not make sense on $\SC$, our first main result is that
everywhere else on $H^1(\S^1,\R^2)$ it does, and we are able to obtain eternal
solutions for any initial data $X_0\in H^1(\S^1,\R^2)\setminus \SC$.
This is Theorem \ref{TMglob}, which is the main result of Sections \ref{ssexistence}--\ref{ssglobal}:

\begin{reptheorem}{TMglob}
For each $X_0\in H^1(\S^1,\R^2)\setminus\SC$ there exists a unique eternal
$H^1(ds)$ curve shortening flow $X:\S^1\times\R\rightarrow\R^2$ in
$C^1(\R; H^1(\S^1,\R^2)\setminus\SC)$ such that $X(\cdot,0) = X_0$.
\end{reptheorem}

In Section \ref{ssconvergence} we study convergence for the flow, showing that the flow is asymptotic to a constant map in $\SC$.

\begin{reptheorem}{convergence}
Let $X$ be an $H^1(ds)$ curve shortening flow.
Then $X$ converges as $t\rightarrow\infty$ in $H^1$ to a constant map $X_\infty\in H^1(\S^1,\R^2)$.
\end{reptheorem}

Numerical simulations of the flow show fascinating qualitative behaviour for solutions.
Figures \ref{fig:squares} and \ref{fig:barbell} exhibit three important properties: first, that there
is no smoothing effect - it appears to be possible for corners to persist throughout the flow.
Second, that the evolution of a given initial curve is highly dependent upon
its size, to the extent that a simple rescaling dramatically alters the the amount of re-shaping along the flow. 
Third, the numerical simulations in Figures \ref{fig:squares} and \ref{fig:barbell} indicate that the flow does
not uniformly move curves closer to circles.
The scale-invariant isoperimetric ratio $\SI$ is plotted alongside the
evolutions and for an embedded barbell it is not monotone\footnote{This is also the case for the classical curve shortening flow, as explained in \cite{Gage:1983aa}.}.
We have given some comments on our numerical scheme and a link to the code in
Section \ref{ssnumerics}.
\begin{figure}[h] 
\includegraphics[width=10cm]{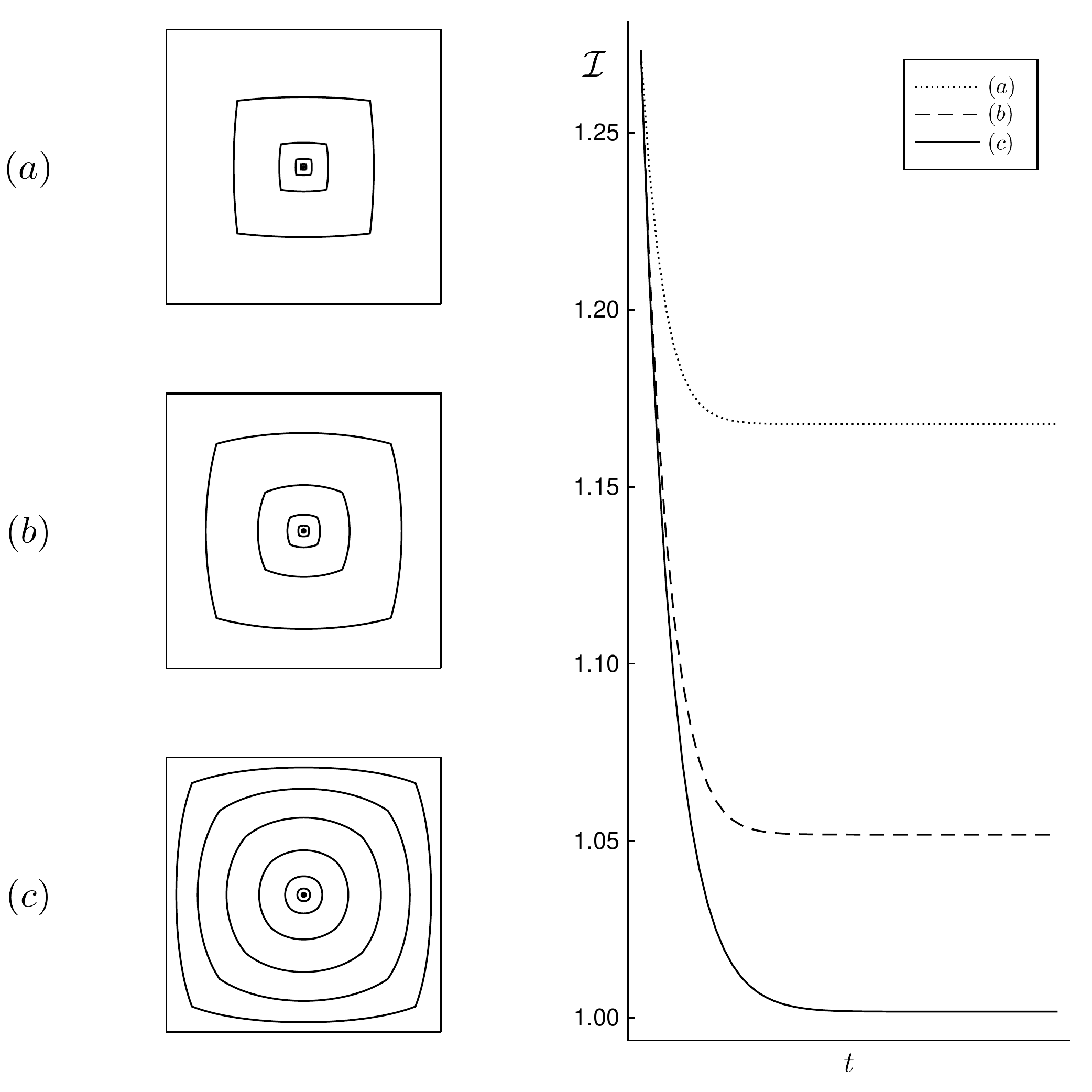}
\caption{ Left: (a),(b),(c) initial side lengths 1,2,4 respectively, time step $0.2$, every 5th out of 50 steps shown. Right: evolution of the isoperimetric ratio.}
\label{fig:squares}
\end{figure}

Despite the lack of a generic smoothing effect, what we might hope is that a generic \emph{preservation} effect holds.
In Section 5.4 we consider this question in the $C^k$ regularity spaces (here
$k\in\N$), and show that this regularity is indeed preserved by the flow.
We consider the question of embeddedness in Section 5.7, with the main result
there showing that embedded curves with small length relative to their
chord-arclength ratio will remain embedded.
Since the chord-arclength ratio is scale-invariant but length is not, we
comment that this condition can always be satisfied by rescaling the initial
data.

We summarise this in the following theorem.

\begin{theorem}\label{ckcharch}
Let $k\in\N_0$ be a non-negative integer.
For each $X_0\in \imm^k$ there exists a unique eternal
$H^1(ds)$ curve shortening flow $X:\S^1\times\R\rightarrow\R^2$ in
$C^1(\R; \imm^k)$ such that $X(\cdot,0) = X_0$.

Furthermore, suppose $X_0$ satisfies
\begin{equation}
\inf_{s\in[0,\L_0]} \frac{\C(s)}{\mcs(s)} 
> \frac{L_0^2\sqrt{2+\vn{X_0}_\infty^2}}{4} e^{\frac{L_0^2\sqrt{2+\vn{X_0}_\infty^2}}{4}}
\,.
\label{EQembcondn}
\end{equation}
where $\C$ and $\mcs$ are the chord and arclengths respectively.
Then $X(t)$ is a family of embeddings.
\end{theorem}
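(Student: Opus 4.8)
The plan is to treat the two assertions separately: first upgrading the existence theory of Theorem \ref{TMglob} from $H^1(\S^1,\R^2)\setminus\SC$ to $\imm^k$, and then controlling the chord--arclength ratio along the flow.

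For the regularity statement I would read \eqref{h1flow} as an autonomous ODE $\dot X=F(X)$ on the open subset $\imm^k\subset C^k(\S^1,\R^2)$, where $F(X)=-X-\int_0^\L X(\tilde s)G(s,\tilde s)\,d\tilde s$. The key observation is that $F$ is \emph{regularity preserving}: writing everything over a fixed parameter $u\in\S^1$, the arclength $s(u)=\int_0^u\enorm{X_{u'}}\,du'$ and the total length $\L$ depend smoothly on $X\in\imm^k$, and the kernel $G$ is a smooth (even) function built from $\cosh$ and $\sinh$; hence the integral term is at least as regular in $u$ as $X$ itself, so that $F:\imm^k\to C^k$ is well defined and locally Lipschitz (indeed smooth). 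Cauchy--Lipschitz in the Banach space $C^k$ then yields a unique local $C^1$ solution, which by uniqueness and the embedding $C^k\hookrightarrow H^1$ coincides with the flow of Theorem \ref{TMglob}. To reach an eternal solution in $C^1(\R;\imm^k)$ I would run the standard continuation argument: differentiating \eqref{h1flow} in $u$ gives $\partial_t\enorm{X_u}^2=-2\enorm{X_u}^2\big(1+\ip{\int X\partial_s G\,d\tilde s,\,T}\big)\ge -C(t)\enorm{X_u}^2$ with $C$ locally bounded, so $\essinf\enorm{X_u}$ cannot reach $0$ in finite time and the immersion condition survives; a matching induction on $j\le k$ bounds $\norm{\partial_u^j X}_\infty$ by Gronwall, ruling out blow-up of the $C^k$ norm on bounded intervals. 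Hence the maximal interval is all of $\R$.

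For the embeddedness statement I would monitor, for distinct parameters, the chord $\C=\enorm{X(u)-X(v)}$ and the arclength $\mcs$ between the corresponding points, aiming to show the scale-invariant ratio $\C/\mcs$ stays positive. Differentiating and using \eqref{h1flow}, the velocity difference is $X_t(u)-X_t(v)=-(X(u)-X(v))-\int_0^\L X(\tilde s)\big(G(s_u,\tilde s)-G(s_v,\tilde s)\big)\,d\tilde s$, and similarly $\partial_t(ds)=\ip{\partial_s X_t,\,T}\,ds$ with $\partial_s X_t=-T-\int X\partial_s G\,d\tilde s$. The decisive structural point is that the leading term $-X$ contracts $\C$ and $\mcs$ at the same unit rate, so this contraction cancels in the logarithmic derivative of $\C/\mcs$; what remains is governed only by the nonlocal correction, which I would estimate using the uniform Lipschitz bound on $G$ in its first slot together with $\enorm{s_u-s_v}\le\mcs$, yielding a differential inequality of the form $\partial_t(\C/\mcs)\ge -C\vn{X}_\infty\L$. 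Integrating, and using that $\L(t)$ is nonincreasing (length decreases along its own gradient flow) and in fact decays exponentially because the lowest nonconstant mode contracts at rate $\ge (2\pi/\L)^2/(1+(2\pi/\L)^2)$, the total decrease of $\C/\mcs$ is finite and bounded by the right-hand side of \eqref{EQembcondn}; thus \eqref{EQembcondn} forces $\inf_{u\ne v}\C/\mcs>0$ along the flow, which is exactly embeddedness.

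The routine part is the regularity bootstrap: once one notices that the smoothing inverse $(1-\partial_s^2)^{-1}$ implicit in \eqref{h1flow} gains the two derivatives that the curvature vector $X_{ss}=kN$ costs, preservation of $C^k$ is morally automatic, and the only care needed is the smooth dependence on $X$ through the arclength reparametrisation. The main obstacle is the embeddedness estimate. Because \eqref{h1flow} is \emph{nonlocal}, there is no clean maximum principle of Huisken type available, and one is forced into the cruder Gronwall scheme above; the delicate point is the bookkeeping that produces the precise constant $\tfrac{\L_0^2\sqrt{2+\vn{X_0}_\infty^2}}{4}e^{\L_0^2\sqrt{2+\vn{X_0}_\infty^2}/4}$ in \eqref{EQembcondn} --- in particular, extracting the factor $\sqrt{2+\vn{X_0}_\infty^2}$ from a sharp bound on the velocity and on $\enorm{\partial_s X_t}$, and integrating the decay of $\L(t)$ to a finite total. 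Controlling this total decrease, rather than merely showing it is finite, is where essentially all the work lies.
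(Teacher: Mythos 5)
Your architecture coincides with the paper's: the $\imm^k$ part is proved there (Theorem \ref{TMglobCk}) by treating the flow as a Banach-space ODE with a regularity-preserving, locally Lipschitz right-hand side and closing continuation with Gronwall bounds on $\essinf\enorm{X_u}$ and on $\norm{\partial_u^jX}_\infty$; the embeddedness part (Proposition \ref{PNchordarc}) uses exactly your mechanism, namely that the unit-rate contraction from the $-X$ term cancels in the logarithmic derivative of the chord--arc ratio, after which the nonlocal remainder is integrated in time against the exponential decay of $\L$. Two of your steps are not sound as written, though. First, $G$ is \emph{not} smooth: $\partial_s\cosh(\enorm{s-\tilde s}-\L/2)$ jumps across the diagonal $s=\tilde s$, so ``the kernel is smooth, hence the integral term is as regular as $X$'' fails as an argument. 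The paper instead transfers derivatives onto $X$ using $\partial_sG=-\partial_{\tilde s}G$ and repeated integration by parts (its identity \eqref{eq:gradluk}); that is what actually makes $F:\imm^k\to C^k(\S^1,\R^2)$ well defined and Lipschitz, and it is a real step rather than an observation.

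Second, and more seriously, your justification of the exponential decay of $\L$ via ``the lowest nonconstant mode contracts at rate $(2\pi/\L)^2/(1+(2\pi/\L)^2)$'' is a linearisation heuristic with no meaning for this nonlinear, nonlocal flow, and the constant in \eqref{EQembcondn} is precisely the integrated decay rate: the paper derives $\L(t)\le\L_0e^{-\beta t}$ with $\beta=1/\sqrt{2+\vn{X_0}_\infty^2}$ from the gradient inequality $\norm{\grad_{H^1(ds)}\L_X}^2_{H^1(ds)}\ge C\L$ (Lemma \ref{lem:gradientinequality}), which comes from $\tfrac{d}{dt}\L=-\L-\int\ip{X,X_t}\,ds$ together with Young's inequality and the monotone $\vn{X(t)}_\infty$ bound. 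Without that lemma there is no route to the factor $\sqrt{2+\vn{X_0}_\infty^2}$, i.e.\ to the very hypothesis you are meant to exploit. Relatedly, your differential inequality $\partial_t(\C/\mcs)\ge -C\vn{X}_\infty\L$ is too crude: the paper obtains $|Q_0|\le\tfrac12\L^2\,\C\,\mcs$ and $|Q_1|\le\tfrac12\L^2\mcs$ by subtracting the average $\bar X$ and using Poincar\'e plus the Lipschitz bound \eqref{eq:Gest2}, so the loss term is $\L^2$ rather than $\vn{X}_\infty\L$; it is this $\L^2$, integrated against the decay rate $\beta$, that produces the exact threshold $\tfrac{\L_0^2}{4\beta}e^{\L_0^2/(4\beta)}$ in \eqref{EQembcondn}. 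Your proposal correctly identifies where the work lies but does not supply the two estimates that do it.
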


\begin{figure}[h]
\includegraphics[width=12cm]{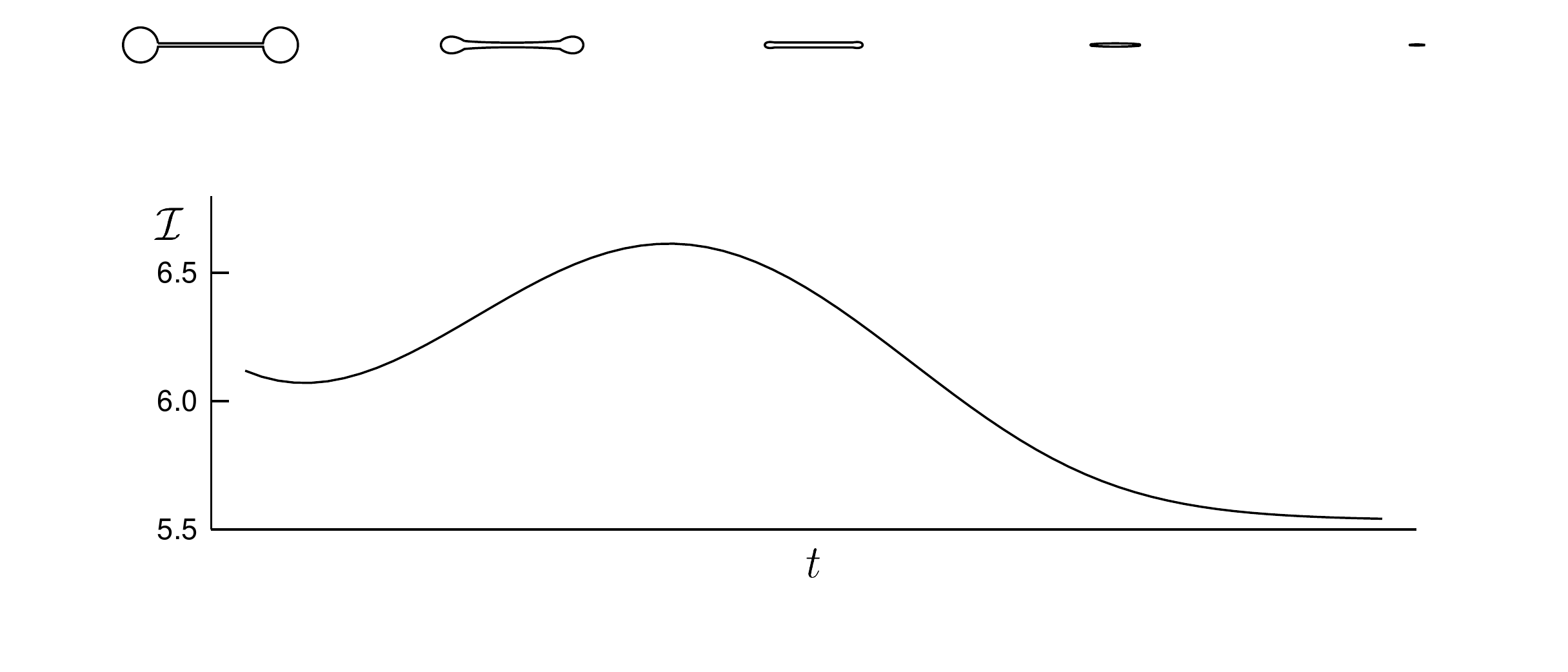}
\caption{Evolution for a barbell inital curve. 70 steps of size 0.1.}
\label{fig:barbell}
\end{figure}

Although the $H^1(ds)$ curve shortening flow disappears (in infinite time), we are interested in
identifying if it asymptotically approaches any particular shape.
In order to do this, we define the \emph{asymptotic profile} of a given
$H^1(ds)$ curve shortening flow $Y:\S\times\R\rightarrow\R^2$ by
\[
 Y(t,u):=e^{t}\left(X(t,u)-X(t,0)\right)\,.
\]
Because of the exponential rescaling bounds for $Y$ and its gradient become more difficult than for $X$. On the other hand, scale invariant estimates for $X$ such as the chord-arc ratio and isoperimetric ratio carry through directly to estimates on $Y$.
Furthermore, for $H^1(ds)$ curve shortening flows on the $C^2$ space the curvature scalar $k$  is well defined, and we can ask meaningfully if curvature remains controlled along $Y$
(on $X$, it will always blow up).

By considering the asymptotic profile we hope to be able to identify limiting profiles $Y_\infty$ for the flow.
For the vanilla flow $X$, the limit is always a constant map.
In stark contrast to this, possible limits for $Y$ are manifold.
We are able to show that the asymptotic profile $Y$ does converge to a unique
limit $Y_\infty$ depending on the initial data $X_0$, but it seems difficult to
classify precisely what these $Y_\infty$ look like.
For the curvature, we show in Section 5.5 that it is uniformly bounded for
$C^2$ initial data, and that the profile limit $Y_\infty$ is immersed with
well-defined curvature (Theorem \ref{TMcurvatureprofile}).
On embeddedness, the same result as for $X$ applies due to scale-invariance.
The isoperimetric deficit $\D_Y$ on $Y$ (in a scale-invariant sense) is studied in Section 5.6.
It isn't true that the deficit is monotone, or improving, but at least we can
show that the eventual deficit of the asymptotic profile limit $Y_\infty$ is
bounded by a constant times the deficit of the initial data $X_0$; this is
sharp.

We summarise these results in the following theorem.

\begin{theorem} \label{besttheorem}
Let $k\in\N_0$ be a non-negative integer.
Set $\SB$ to $H^1(\S^1,\R^2)\setminus\SC$ for $k=0$ and otherwise 
set $\SB$ to $C^k(\S^1,\R^2)\setminus\SC$.
For each $X_0\in\SB$ there exists a non-trivial $Y_\infty\in
H^1(\S^1,\R^2)\setminus\SC$ such that the asymptotic profile $Y(t)\to Y_\infty$ 
in $C^0$ as $t\to\infty$.

Furthermore:
\begin{itemize}
\item $Y_\infty$ is embedded if at any $t\in(0,\infty)$ the condition \eqref{EQembcondn} was satisfied for $X(t)$
\item If $k\ge2$, and $X(0)$ is immersed, then $Y_\infty$ is immersed with bounded curvature
\item There is a constant $c = c(\vn{X(0)}_\infty)$ such that the isoperimetric deficit of $Y_\infty$ satisfies
\[
	\D_{Y_\infty} \le c\D_{X(0)}
	\,.
\]
\end{itemize}
\end{theorem}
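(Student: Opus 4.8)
The plan is to reduce Theorem~\ref{besttheorem} to a single convergence statement for the profile together with three \emph{scale-invariant} observations, and to treat these separately. Existence of the flow in each $\SB$ is already supplied by Theorem~\ref{ckcharch} (and by Theorem~\ref{TMglob} when $k=0$), so the real content is the asymptotics of $Y(t,u)=e^t(X(t,u)-X(t,0))$. First I would differentiate $Y$ in $t$ using the flow equation~\eqref{h1flow}. The decisive algebraic feature is that the linear term $-X$ in~\eqref{h1flow} cancels exactly against the $+Y$ arising from differentiating the factor $e^t$; after also subtracting the basepoint value $X(t,0)$ one is left with the closed evolution
\[
\partial_t Y(\cdot,t)=\mathcal H[Y](\cdot)-\big(\mathcal H[Y]\big)(0),\qquad \mathcal H:=(1-\partial_s^2)^{-1},
\]
where $\mathcal H$ is the smoothing operator (with Green's function $-G$) taken with respect to the arclength of $X(\cdot,t)$. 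This equation no longer contains the dangerous prefactor $e^t$ explicitly, and the nonlinearity survives only through the $t$-dependence of the arclength and of $\L(t)$.

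Next I would exploit that the $s$-difference annihilates constants: since $\int_0^{\L}\big(G(s,\tilde s)-G(0,\tilde s)\big)\,d\tilde s=0$, I may center $X$ at its basepoint, and a short estimate of the kernel gives $\int_0^{\L}\enorm{G(s,\tilde s)-G(0,\tilde s)}\,d\tilde s=O(\L^2)$ (equivalently, $\mathcal H$ has norm $O(\L^2)$ on the mean-zero subspace, the top non-constant eigenvalue being $\L^2/(\L^2+4\pi^2)$). This yields the pointwise bound
\[
\vn{\partial_t Y(\cdot,t)}_\infty\le C\,\L(t)^2\,\vn{Y(\cdot,t)}_\infty ,\qquad C=C(\L(0)).
\]
Granting the length estimate $M:=\int_0^\infty \L(t)^2\,dt<\infty$, Gronwall's inequality applied in both directions gives the two-sided control $\vn{Y(0)}_\infty e^{-CM}\le \vn{Y(t)}_\infty\le \vn{Y(0)}_\infty e^{CM}$ and, simultaneously, $\int_0^\infty\vn{\partial_t Y}_\infty\,dt<\infty$. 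Completeness of $C^0$ then produces the limit $Y_\infty$, and the lower bound together with $\vn{Y(0)}_\infty=\sup_u\enorm{X_0(u)-X_0(0)}>0$ (valid since $X_0\notin\SC$) forces $Y_\infty\notin\SC$. Uniform $H^1$ bounds on the profile (the rescaled length $\L_Y=e^t\L_X$ remaining bounded) then place $Y_\infty\in H^1(\S^1,\R^2)\setminus\SC$ by weak compactness. The length decay $M<\infty$ I would import from the convergence analysis behind Theorem~\ref{convergence} and the energy identity $\tfrac{d}{dt}\L=-\vn{\grad_{H^1}\L}_{H^1}^2$.

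For the three additional assertions I would use that embeddedness, the chord--arc ratio $\C/\mcs$, the curvature, and the isoperimetric deficit $\D$ are all invariant under the similarity $X\mapsto Y$. Consequently~\eqref{EQembcondn} holds for $Y(t)$ precisely when it holds for $X(t)$; if it holds at some $t$ then Theorem~\ref{ckcharch} gives embeddedness of $X(t)$, hence of $Y(t)$, and the accompanying \emph{quantitative} chord--arc lower bound is scale-invariant and therefore survives passage to the limit, giving $Y_\infty$ embedded. When $k\ge2$ the curvature of $Y(t)$ is $e^{-t}$ times that of $X(t)$; the uniform bound and convergence established in Theorem~\ref{TMcurvatureprofile} then show $Y_\infty$ is immersed with bounded curvature. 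Finally, since $\D$ is scale-invariant, $\D_{Y(t)}=\D_{X(t)}$, so $\D_{Y_\infty}=\lim_{t\to\infty}\D_{X(t)}$ by continuity of $\D$ along the convergence, and the estimate $\D_{X(t)}\le c\,\D_{X(0)}$ with $c=c(\vn{X(0)}_\infty)$ from Section~5.6 closes the third item.

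The main obstacle is the two-sided control of $\vn{Y}_\infty$. The estimate above shows the profile can neither blow up nor collapse into $\SC$ \emph{provided} the length is square-integrable in time, so the crux is genuinely the length-decay input $M<\infty$ (morally, exponential decay at rate $1$, consistent with the explicit circle solution $r(t)=\sqrt{W(e^{c-2t})}$, for which $e^t r(t)\to e^{c/2}$ with an $O(e^{-2t})$ correction). A secondary difficulty is that the scale-invariant lower bounds---chord--arc for embeddedness and $\enorm{Y_u}$ for immersedness---must be shown to \emph{persist} in the limit rather than degenerate; this requires upgrading the $C^0$ convergence of $Y$ to $C^1$ on the immersed range, which is exactly where the higher regularity $k\ge2$ and Theorem~\ref{TMcurvatureprofile} enter.
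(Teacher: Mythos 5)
Your treatment of the convergence statement is essentially the paper's own route (Theorems \ref{TMprofile} and \ref{TMprofilefullconv}): the same cancellation of the linear term gives $Y_t(t,u)=-\int Y(t,\tilde s)\big(G(s(u),\tilde s)-G(0,\tilde s)\big)\,d\tilde s$, and the same $\tfrac12$-Lipschitz kernel estimate gives $\vn{Y_t}_\infty\le C\L(X(t))^2\vn{Y}_\infty$. Your Gronwall packaging is a mild (and rather clean) variant: the paper instead first proves two-sided bounds on $\L(Y)$ via Lemma \ref{alpha} and the gradient upper bound, plus the pointwise bound \eqref{EQimmersionprofile} on $|Y_u|$, and then gets subconvergence by Arzel\`a--Ascoli before upgrading; your direct lower Gronwall bound $\vn{Y(t)}_\infty\ge\vn{Y(0)}_\infty e^{-CM}$ delivers non-triviality of $Y_\infty$ a little more economically. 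One small slip there: a bound on $\L(Y)=e^t\L(X)$ only controls $\vn{Y_u}_{L^1}$, not $\vn{Y}_{H^1}$; the uniform $H^1$ bound must come from the pointwise estimate \eqref{eq:Xuest}/\eqref{EQimmersionprofile}, which is available in the paper. The curvature and embeddedness bullets are handled as in the paper (Theorem \ref{TMcurvatureprofile} and the quantitative chord--arc lower bound behind Proposition \ref{PNchordarc}, which is genuinely scale-invariant and does persist).

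The genuine gap is in the isoperimetric deficit bullet. The deficit $\D=\L^2-4\pi A$ is \emph{not} scale-invariant: under $X\mapsto\lambda X$ it scales as $\lambda^2\D$, so $\D_{Y(t)}=e^{2t}\D_{X(t)}$, not $\D_{X(t)}$. Your chain ``$\D_{Y(t)}=\D_{X(t)}$, hence $\D_{Y_\infty}=\lim_t\D_{X(t)}$'' would, if anything, yield $\D_{Y_\infty}=0$ (since $\D_{X(t)}=e^{-2t}\D_{Y(t)}\to0$), which is false for a generic non-circular limit profile. The entire content of Proposition \ref{PNisodef} is precisely to beat this $e^{2t}$ factor: one must show $\tfrac{d}{dt}\D\le-\tfrac{\L}{2\sinh(\L/2)}\,2\D+o(\L^4)$, so that
\begin{equation*}
\frac{d}{dt}\D_Y\le 2\D_Y\Big(1-\frac{\L}{2\sinh(\L/2)}\Big)+o(\L^4)e^{2t}\,,
\end{equation*}
where the prefactor $1-\frac{\L}{2\sinh(\L/2)}=O(\L^2)$ is integrable in $t$ by the exponential length decay \eqref{eq:Lexp}. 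This requires the second-order Taylor expansion of the Green's function and the identification of $-2\L\D$ as the leading term of $\int(2\L k-4\pi)\ip{N,\grad\L_X}\,ds$ --- none of which follows from scale-invariance. There is also no estimate of the form $\D_{X(t)}\le c\,\D_{X(0)}$ ``from Section 5.6'' to import as you state it; what that section proves is the much stronger $e^{2t}\D_{X(t)}\le c\,\D_{X(0)}$. As written, the third bullet of the theorem is not proved by your argument.
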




\addtocontents{toc}{\protect\setcounter{tocdepth}{0}}

\section*{Acknowledgements}

The first author is grateful to Shinya Okabe and Kazumasa Fujiwara for helpful conversations.

\addtocontents{toc}{\protect\setcounter{tocdepth}{2}}


\section{Metrics on spaces of immersed curves}

Let $C^k(\S^1,\R^2)$ be the usual Banach space of maps with continuous derivatives up to order $k$.
Our convention is that $|\S^1| = |[0,1]| = 1$.
For $ 1\leq k \leq \infty$ we define 
\[
 \imm^k :=\{ \gamma \in C^k(\S^1,\R^2): \enorm{\gamma'(u)}\neq 0 \}\,.
 \]
Note that $\imm^k$ is an open subset of $C^k(\S^1,\R^2)$.

The tangent space $T_{\gamma}\imm^k\cong C^{k}(\S^1,\R^2)$
consists of vector fields along $\gamma$. We define the following Riemannian
metrics on $\imm^1$ for $v,w\in T_\gamma \imm^1$:
\begin{align}\nonumber
\ip{v,w}_{L^2} &:= \int_0^1 \ip{v,w} du\\ \nonumber
\ip{v,w}_{H^1} &:= \ip{v,w}_{L^2}+\ip{v',w'}_{L^2}\\ \nonumber
\ip{v,w}_{L^2(ds)} &:= \int_0^{\L(\gamma)}\ip{v,w} \, ds \\
\ip{v,w}_{H^1(ds)}&:=\ip{v,w}_{L^2(ds)}+\ip{v_s,w_s}_{L^2(ds)} \label{h1dsprod}
\end{align}
The length function \eqref{eq:length} is of course well-defined on the larger Sobolev space $H^1(\S^1,\R^2)$, as are the $L^2, H^1$ and $L^2(ds)$ products above. However, the $H^1(ds)$ product is not well-defined because of the arc-length derivatives, even if one restricts to curves which are almost everywhere immersed.

We remark that the $L^2(ds)$ metric on $\imm^1$ is an example of a \emph{weak} Riemannian metric - a purely infinite dimensional phenomenon where the topology induced by the Riemannian metric is weaker than the manifold topology. In fact, even for a strong Riemannian metric, geodesic and metric completeness are not equivalent (as guaranteed by the Hopf-Rinow theorem in finite dimensions) and it is not always the case that points can be joined by minimising geodesics (for an overview of these and related facts see \cite{Bruveris:2016aa}).

\subsection{Vanishing Riemannian distance in $(\IM,L^2(ds))$}
\label{SNawesome}
Consider curves $\gamma_1, \gamma_2 \in \IM$ and a smooth path $\alpha:[0,1]\rightarrow \IM$ with $\alpha(0) = \gamma_1$ and $\alpha(1) = \gamma_2$.
The $L^2(ds)$-length of this smooth path is well-defined and given by
\begin{equation}
\label{EQdefnl2ds}
	\SL_{L^2(ds)}(\alpha)
	:= \int_0^1 \norm{\alpha'(t)}_{L^2(ds)} \, dt
	= \int_0^1 \left(\int_0^{\L(\alpha(t))} \enorm{ \alpha_t(t,s)}^2 \,ds\right )^{\tfrac{1}{2}}\, dt 
\,.
\end{equation}
As usual, one defines a distance function associated with the Riemannian metric by
\[ \dst(\gamma_1,\gamma_2):=\inf\{ \SL_{L^2(ds)}(\alpha): \alpha \text{ piecewise smooth path from } \gamma_1 \text{ to } \gamma_2 \}
\]
Since the $L^2(ds)$ metric is invariant under the action of $\text{Diff}( \S^1)$ it induces a Riemannian metric on
the quotient space $\SQ$ (except at the singularities) as follows. Let $\pi:\IM\to \SQ$ be the projection. Given $v,w\in T_{[\gamma]}\SQ$ choose any $V,W\in T_\gamma\IM$ such that $\pi(\gamma)=[\gamma],$  $T_\gamma \pi (V)=v, T_\gamma\pi(W)=w$. Then the quotient metric is given by
\[ \ip{v,w}_{[\gamma]}:=\ip{V^{\perp},W^\perp}_{\gamma,L^2(ds)} \]
where $V^\perp$ and $W^\perp$ are projections onto the subspace of $T_\gamma\IM$ consisting of vectors which are tangent to the orbits. This is just the space of vector fields along $\gamma$ in the direction of the normal $N$ to $\gamma$, and so $V^\perp=\ip{V,N}N$. The length of a path $\pi(\alpha)$ in $\SQ$ according to the quotient metric is then
\[\SL^\SQ_{L^2(ds)}(\pi(\alpha))=\int_0^1\left (\int_0^\SL \enorm{\alpha^\perp_t}^2\,  ds\right)^\frac{1}{2}\, dt\ \]
and the distance is
\begin{align*}
	\dst^\SQ([\gamma_1],[\gamma_2])&= \inf \{ \L_{L^2(ds)}^\SQ(\pi(\alpha)): \pi(\alpha) \text{ piecewise smooth path from } [\gamma_1] \text{ to } [\gamma_2] \}
\end{align*}
This is the distance function that Michor and Mumford have shown to be identically zero (Theorem \ref{TMMM}). 
They also point out (cf.  \cite[Section 2.5]{MR2201275}) that for any smooth path $\alpha$ between curves $\gamma_1$, $\gamma_2$, there exists a
smooth $t$-dependent family of reparametrisations $\phi:[0,1]\to
\text{Diff}(\S^1)$ such that the reparametrised path\footnote{Note that as paths in the full space $\IM$, $\tilde \alpha$ is different to $\alpha$, but they project to the same path in $\SQ$.} $\tilde \alpha(t,u)
:=\alpha(t,\phi(t,u))$ has path derivative $\tilde \alpha_t(t)$ which is \emph{normal} to $\tilde \alpha(t)$ . Thus an equivalent definition is
\[\dst^\SQ([\gamma_1],[\gamma_2])=\inf \{\SL_{L^2(ds)}(\alpha)\,:\,\alpha \text{ p.w. smooth with } \alpha(0)\in[\gamma_1], \alpha(1)\in [\gamma_1]\} \]

\begin{theorem}[Michor-Mumford \cite{MR2201275}]\label{TMMM}
For any $\varepsilon>0$ and $[\gamma_1], [\gamma_2]$ in the same path component of $\SQ$ there is a path $\alpha:[0,1]\to \imm^\infty $ satisfying $\alpha(0)\in[\gamma_1], \alpha(1)\in [\gamma_2]$ and having length $\L_{L^2(ds)}(\alpha)<\varepsilon $.
\end{theorem}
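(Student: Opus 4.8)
The plan is to reproduce the \emph{corrugation} (or ``teeth'') mechanism of Michor--Mumford, reducing the full-space statement to a normal-energy estimate on the quotient $\SQ$ via the reparametrisation remark recorded just above. First I would observe that it suffices to exhibit a path $\pi(\alpha)$ in $\SQ$ from $[\gamma_1]$ to $[\gamma_2]$ whose quotient length $\L^\SQ_{L^2(ds)}(\pi(\alpha)) = \int_0^1\big(\int_0^\L |\alpha^\perp_t|^2\,ds\big)^{1/2}dt$ is $<\varepsilon$: once such a path is in hand, the reparametrisation $\tilde\alpha(t,u) = \alpha(t,\phi(t,u))$ with $\tilde\alpha_t$ normal satisfies $\L_{L^2(ds)}(\tilde\alpha) = \L^\SQ_{L^2(ds)}(\pi(\alpha)) < \varepsilon$ and has the same endpoints up to reparametrisation, so it is a path in $\imm^\infty$ with $\tilde\alpha(0)\in[\gamma_1]$, $\tilde\alpha(1)\in[\gamma_2]$. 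Since quotient lengths add under concatenation, it then suffices to treat an \emph{elementary move}: deforming the curve by a small normal graph supported in a single coordinate chart, where after a rotation the curve is a normal graph over a segment.

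The heart of the matter is a key estimate showing such an elementary move costs arbitrarily little normal energy. On a segment I would corrugate, introducing a sawtooth built from a fixed smooth $1$-periodic template of frequency $n$ and height $h$, turned on over a unit time interval, so the curve acquires $\sim n$ steep teeth (angle $\theta_n$ with $\cos\theta_n \sim (nh)^{-1}$). The decisive point is that along a steep tooth the arclength element is large, $\int ds \sim nh$, while the normal component of \emph{any} bounded deformation velocity $V\hat e_2$ is small, $\langle V\hat e_2, N\rangle \sim V\cos\theta_n \sim V/(nh)$. Hence the normal energy of translating the corrugated segment vertically by $D$ obeys
\[
\int |\langle D\hat e_2, N\rangle|^2\,ds \;\sim\; \frac{D^2}{(nh)^2}\cdot nh \;=\; \frac{D^2}{nh}\;\xrightarrow[n\to\infty]{}\;0,
\]
and a similar computation gives normal energy $\sim h/n\to0$ for growing and for removing the teeth. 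Thus the three-phase path corrugate--deform--decorrugate has quotient length tending to $0$ as $n\to\infty$, for fixed $D$ and $h$.

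To finish I would take any smooth homotopy from a representative of $[\gamma_1]$ to one of $[\gamma_2]$ (which exists since they lie in the same path component), subdivide it into finitely many elementary normal moves localised in charts, and realise each by the corrugate--deform--decorrugate path above. The number $M$ of moves is finite and fixed once the homotopy and its subdivision are chosen; I then take the corrugation frequency $n$ large enough that each move contributes quotient length $<\varepsilon/M$, so the concatenated path has quotient length $<\varepsilon$. Reparametrising to normal velocity as above produces the desired $\alpha:[0,1]\to\imm^\infty$.

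The main obstacle is the bookkeeping for the transition phases together with maintaining genuine smoothness and immersedness throughout. The template must be smooth, so the teeth are rounded rather than literal corners, and the amplitude and frequency must be chosen so that $|c_\theta|\neq0$ at every time, keeping the path inside $\imm^\infty$; one must also check that corrugating over a \emph{curved} base rather than a straight segment perturbs the estimates only by lower-order terms, which is where a careful choice of chart and a uniform immersion bound on the fixed homotopy enter. The scaling balance --- $n\to\infty$ forcing both the deformation energy $D^2/(nh)$ and the transition energy $h/n$ to vanish --- is the crux, and it is precisely this competition between the growth of arclength and the decay of the normal velocity component that produces the vanishing distance. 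The clean separation of the normal (quotient) energy from the tangential part, handled by the reparametrisation remark, is what upgrades the construction from a statement about $\SQ$ to the stated full-space bound.
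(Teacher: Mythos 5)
Your proposal is correct in substance and rests on exactly the key estimate that drives the Michor--Mumford result (arclength of the oscillated curve grows like $n$ while the normal component of the velocity decays like $1/n$, so the squared normal speed integrated against $ds$ decays like $1/n$), but it organizes the construction differently from the paper. The paper's sketch (following \cite[Section 3.10]{MR2201275}) takes an \emph{arbitrary} smooth path $\alpha$ from $\gamma_1$ to $\gamma_2$ and deforms it globally by staggering the motion of alternating sample points in time: the points $\gamma_1(\theta_i)$ travel along $\alpha$ at double speed during the first half of the time interval while the interleaved points $\gamma_1(\psi_i)$ wait, and then the roles reverse. The teeth are thus generated automatically by the path itself, there is no chart decomposition, no explicit corrugation template, and no separate transition phases; the endpoint is a reparametrisation of $\gamma_2$ by construction. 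Your version first reduces to the quotient length via the normal-reparametrisation remark, then decomposes the homotopy into finitely many chart-localized normal-graph moves and realizes each by an explicit corrugate--translate--decorrugate triple. Both are legitimate; your modular version makes the scaling $D^2/(nh)$ and $\sqrt{h/n}$ completely transparent and is easy to verify move-by-move, at the cost of the bookkeeping you already flag (cutoffs at chart boundaries, smoothing the template to stay in $\imm^\infty$, curved-base corrections, and concatenating $M$ moves each of cost $<\varepsilon/M$), whereas the paper's single global deformation avoids all of that and needs only the one estimate from \cite[Section 3.10]{MR2201275}.
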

Since it is quite a surprising result and an elegant construction, we include a description of the proof. 
The idea is to show that we may deform any path $\alpha$ in $\imm^\infty$ to a new path $\alpha_n$ that remains smooth but has
small normal projection, and whose endpoint changes only by reparametrisation.

So, let us consider a smooth path $\alpha:[0,1]\rightarrow\imm^\infty$ such that $\alpha(0) = \gamma_1$ and $\alpha(1) = \gamma_2$.
We choose evenly-spaced points $\theta_0,\ldots,\theta_n$ in $\S^1$ and 
move $\gamma_1(\theta_i)$, via $\alpha(2t)$, to their eventual destination
$\gamma_2(\theta_i)$ twice as fast.  The in-between points $\psi_i =
(\theta_{i-1} + \theta_i)/2$ should remain stationary while this occurs.  Once
half of the time has passed, and all points $\theta_i$ are at their
destination, the points $\gamma_1(\psi_i)$ may begin to move via $\alpha$.  They
should also move twice as fast as before.
A graphical representation of this is given in Figure 3.
\input{mmfigure2}

The resultant path $\alpha_n$ has small normal projection (depending on $n$) but
also longer length (again depending on $n$).
The key estimate in \cite[Section 3.10]{MR2201275} shows that the length of the
path $\alpha_n$ increases proportional to $n$ and the normal projection decreases
proportional to $\frac1n$.
Since the normal projection is squared, this means that the length of $\alpha_n$ is
proportional to the length of $\alpha$ times $\frac{1}{n}$.

In other words, for any $\varepsilon>0$ we can connect $\gamma_1$ to a \emph{reparametrisation} of
$\gamma_2$ by a path with $L^2(ds)$-length less than $\varepsilon$, and so the distance
between them is zero.
\footnote{We remark that this phenomenon of triviality of the metric topology
induced by the Riemannian $L^2(ds)$ metric on the quotient space is also
established in higher dimensions, see \cite{MM05}.}

Note that the corresponding result does not immediately follow for the full space $\IM$ because in the full space the tangential component of the path derivative is also measured. Indeed, we could apply the Michor-Mumford construction to obtain a path $\alpha_n$ whose derivative has small normal component, and then introduce a time-dependent reparametrisation to set the tangential component to zero, but  of course the reparametrisation changes the endpoint to a reparametrisation of the original endpoint. However, as we show in the following theorem, it is still possible to get the desired result by diverting through a sufficiently small curve.

\begin{theorem}\label{TMawesome}
The $L^2(ds)$-distance between any two curves in the same path component of $\IM$ vanishes.
\end{theorem}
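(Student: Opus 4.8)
The plan is to build on the Michor--Mumford construction recalled above and isolate the \emph{only} extra difficulty present in the full space, namely the tangential (reparametrisation) cost. Concretely, given a quotient path realising a shape deformation, reparametrising its zigzag realisation in time so that the $t$-derivative is everywhere normal produces a path $\tilde\alpha\subset\IM$ whose full speed $\norm{\tilde\alpha_t}_{L^2(ds)}$ equals its normal speed, hence its quotient speed; by Theorem~\ref{TMMM} this can be driven below any prescribed $\varepsilon$. The price is that such a normal-only path ends not at the target curve but at a \emph{reparametrisation} of it. Thus for any two curves I can make $\dst\big(\gamma_1,\sigma\circ\phi\big)$ arbitrarily small for a chosen target shape $\sigma$ and some orientation-preserving $\phi$ that the construction produces, and the whole theorem reduces to absorbing this leftover $\phi$. (Throughout, $\gamma_1,\gamma_2$ lie in one path component of $\IM$, so they share a turning number $m$; I take the target shapes in the same regular-homotopy class, so Theorem~\ref{TMMM} applies and all reparametrisations stay orientation preserving.)

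The mechanism for absorbing $\phi$ is that \emph{everything is cheap on small curves}. The elementary bound I would record is: if $\alpha:[0,1]\to\IM$ is a path all of whose curves have length at most $\Lambda$ and with $\sup_{t,u}\enorm{\alpha_t}\le M$, then
\[
\SL_{L^2(ds)}(\alpha)=\int_0^1\Big(\int_0^{\L(\alpha(t))}\enorm{\alpha_t}^2\,ds\Big)^{1/2}\,dt\le M\sqrt{\Lambda}.
\]
Applied to a curve scaled to size $r$, this shows that undoing a (scale-independent) reparametrisation costs $O(r^{3/2})$ and that translating it a bounded distance costs $O(r^{1/2})$; both vanish as $r\to0$. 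In other words, while an \emph{honest} homothety has an irreducible $O(1)$ cost and so cannot be used, Theorem~\ref{TMMM} contracts the shape of $\gamma_1$ to an arbitrarily small curve for free, and the residual reparametrisation and repositioning become negligible precisely because the curve we land on is tiny. This is the ``detour through a small curve''.

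I would then assemble the estimate with the correct order of quantifiers, which is the crux. Fix $\varepsilon>0$. Route the shape contraction of $\gamma_1$ through a fixed unit model of turning number $m$ followed by a homothety down to size $r$; the quotient length of this route is bounded by a constant independent of $r$ (the homothety part of a round model contributes a uniformly bounded normal length). Applying the zigzag shortening with $n=n(\varepsilon)$ teeth --- a choice depending only on $\varepsilon$, not on $r$ --- gives a normal-only path from $\gamma_1$ to $\sigma_r\circ\phi_n$ of $L^2(ds)$-length $<\varepsilon$, where $\sigma_r$ is the size-$r$ model and the residual $\phi_n$ has a fixing-cost factor $\tilde K(\phi_n)$ that is itself independent of $r$. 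Now choose $r$ \emph{last}, small enough that $r^{3/2}\tilde K(\phi_n)<\varepsilon$ and $r^{1/2}<\varepsilon$; undoing $\phi_n$ and repositioning via the displayed bound then costs $<2\varepsilon$, so $\dst(\gamma_1,\sigma_r)<3\varepsilon$ for a prescribed tiny model $\sigma_r$. Running the identical argument for $\gamma_2$ onto the \emph{same} $\sigma_r$ (take the smaller of the two radii) and applying the triangle inequality yields $\dst(\gamma_1,\gamma_2)<6\varepsilon$, whence the distance is zero.

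The main obstacle, and the delicate point of the write-up, is exactly this interplay of limits: the zigzag must be refined ($n\to\infty$) to shorten the normal path, which tends to make the leftover reparametrisation $\phi_n$ wild, so one must verify that its cost-factor is \emph{scale-independent} and therefore defeated by sending the target size $r\to0$ afterwards. Establishing that $\tilde K(\phi_n)$ does not secretly depend on $r$ --- i.e.\ that the normal-forcing reparametrisation is determined at the level of shapes rather than of scale --- is where the care lies. The remaining points are routine: reducing to smooth representatives (a $C^1$ curve is $L^2(ds)$-approximated by smooth ones at arbitrarily small cost, again by the displayed bound applied to $C^1$-small motions), and checking that the common small model can be placed and oriented compatibly for both $\gamma_1$ and $\gamma_2$.
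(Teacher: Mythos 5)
Your proposal is essentially the paper's argument: detour through an arbitrarily small curve, use Theorem~\ref{TMMM} to reach a reparametrisation of it at negligible cost, and then undo that reparametrisation by the linear homotopy $x((1-t)u+t\theta(u))$, whose $L^2(ds)$-length is of order $\norm{x'}_{L^\infty}^{3/2}$. The one point you flag as delicate --- whether the fixing cost $\tilde K(\phi_n)$ of the residual reparametrisation secretly depends on the scale $r$ --- is a non-issue, and your own displayed bound already dissolves it: for \emph{any} $\theta\in\diff(\S^1)$ one has $|\theta(u)-u|\le 1$ (with the convention $|\S^1|=1$), so the linear homotopy from $x$ to $x\circ\theta$ has pointwise speed $|x'(c)||\theta(u)-u|\le\norm{x'}_{L^\infty}$ and hence $L^2(ds)$-length at most $\norm{x'}_{L^\infty}^{3/2}$, \emph{uniformly over all reparametrisations}. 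Consequently there is no interplay of limits to manage: the paper simply fixes the small curve $x$ with $\norm{x'}_{L^\infty}<(\varepsilon/4)^{2/3}$ first, applies Theorem~\ref{TMMM} as a black box (so the zigzag parameter $n$ never appears explicitly and the quantifier order you agonise over is reversed harmlessly), and absorbs whatever reparametrisation emerges at cost less than $\varepsilon/4$. Your explicit routing through a unit model of the correct turning number is likewise unnecessary --- the paper just takes $x$ to be a small scalar multiple of $\gamma_0$ --- but it is not wrong.
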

\begin{proof}
Let $\gamma_0,\gamma_1 \in C^\infty(\S^1,\R^2)$ be smooth immersions in the same path component, and let $x$ be another curve in the same component with $\norm{x'}_{L^\infty}<\left(\frac{\varepsilon}{4}\right)^{2/3}$ (for example, $x$ could be a sufficiently small scalar multiple of $\gamma_0$). By Theorem \ref{TMMM} there exists a path $\alpha_0$ from $\gamma_0$ to $y$, where $y$ is a reparametrization of $x$, with $\SL_{L^2(ds)}(\alpha_0)<\frac{\varepsilon}{4}$. Now let $\theta\in\text{Diff}( \S^1)$  such that $y(u)=x(\theta(u))$ and define a path $\alpha_1$ from $x$ to $y$ by
\begin{align*}
	c(t,u)&:=(1-t)u+t\theta(u)\\
\alpha_1(t,u)&:=x(c(t,u))
\end{align*}
Then
\begin{align*}
	\partial_t\alpha_1 &=x'(c)c_t=x'(c)(\theta(u)-u)\\
\partial_u\alpha_1 &=x'(c)c_u
\end{align*}
and by \eqref{EQdefnl2ds} the $L^2(ds)$-length of $\alpha_1$ is
\begin{align*}
	\SL_{L^2(ds)}(\alpha_1)=\int_0^1\left (\int_{ \S^1}\enorm{x'(c)}^3|\theta(u)-u|^2 \, dc\right)^{1/2}\, dt
\leq \norm{x'}_{L^\infty}^{3/2}\leq \frac{\varepsilon}{4}
\end{align*}
Now we concatenate $\alpha_0$ with $\alpha_1(-t)$ to form a path $p$ from $\gamma_0$ to $x$ with $\SL_{L^2(ds)}(p)<\frac{\varepsilon}{2}$. By the same method we construct a path $q$ from $\gamma_1$ to $x$ with $\SL_{L^2(ds)}(q)<\frac{\varepsilon}{2}$ and then the concatenation of $p$ with $q(-t)$ is a path from $\gamma_0$ to $\gamma_1$ with arbitrarily small $L^2(ds)$-length. We assumed $\gamma_0,\gamma_1$ were smooth, but since the smooth curves are dense in $C^1$ and $\norm{\alpha_t}_{L^2(ds)}\leq c\norm{\alpha_t}_{C^1}\norm{\alpha_u}_{L^2} $ for any path $\alpha$, we can also join any pair of curves in $\imm^1$ by a path with arbitrarily small $L^2(ds)$-length.	
\end{proof}

\begin{remark}
As mentioned in the introduction, an alternative proof of a more general result is outlined in \cite{Bauer:2012aa}. The proof relies on another theorem of Michor and Mumford \cite{MM05} (extended in eg. \cite{BHP20}) showing that the right invariant $L^2$ metric on diffeomorphism groups gives vanishing distance.
\end{remark}

\section{Symmetries of metrics and gradient flows} \label{symmetries}

The standard curve shortening flow \eqref{eq:csf} enjoys several important symmetries:
\begin{itemize}

\item \emph{Isometry of the plane:} if $A:\R^2\to\R^2$ is an isometry and $X:\S^1\times[0,T)\to \R^2$ is a solution to curve shortening flow then $A\circ X$ is also a solution.

\item \emph{Reparametrisation:} if $\phi\in \text{Diff}(\S^1)$ and $X(u,t)$ is a solution to curve shortening flow then $X(\phi(u),t)$ is also a solution.

\item \emph{Scaling spacetime:} if $X(u,t)$ is a solution to curve shortening flow then so is $\lambda X(u,t/\lambda^2), 
$ with $\lambda>0$.
\end{itemize}

It is interesting to note that these symmetries can be observed directly from symmetries of the length functional $\L$ and the $H^{0}(ds)$ Riemmannian inner product without actually calculating the gradient. 

\begin{lemma}\label{LMsym}
Suppose there is a free group action of $G$ on $(M, g)$ which is an isometry of the Riemannian metric $g$ and which leaves $E:M\to \R$ invariant. Then the gradient flow of $E$ with respect to $g$ is invariant under the action. 
\end{lemma}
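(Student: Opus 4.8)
The plan is to reduce the invariance of the flow to a single equivariance property of the gradient vector field, and then transport trajectories by the group action. Writing $\Phi_a:M\to M$ for the diffeomorphism given by the action of $a\in G$, with tangent map $T_p\Phi_a:T_pM\to T_{\Phi_a(p)}M$, I would first establish the equivariance identity
\[
	T_p\Phi_a\big((\grad E)_p\big)=(\grad E)_{\Phi_a(p)}
	\qquad\text{for all }p\in M,\ a\in G,
\]
and then deduce that $t\mapsto\Phi_a(X(t))$ solves the same gradient flow equation as $X$, so that the solution through a translated initial datum is the translate of the solution.

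For the equivariance of the gradient I would combine the two hypotheses in differential form. Invariance $E\circ\Phi_a=E$ differentiates, by the chain rule, to $dE_p=dE_{\Phi_a(p)}\circ T_p\Phi_a$, so that for every $v\in T_pM$,
\[
	g_p\big((\grad E)_p,v\big)=dE_p(v)=dE_{\Phi_a(p)}\big(T_p\Phi_a\,v\big)=g_{\Phi_a(p)}\big((\grad E)_{\Phi_a(p)},T_p\Phi_a\,v\big).
\]
Because $\Phi_a$ is an isometry, $g_{\Phi_a(p)}(T_p\Phi_a\,w,T_p\Phi_a\,v)=g_p(w,v)$ for all $w,v$; setting $w:=(T_p\Phi_a)^{-1}(\grad E)_{\Phi_a(p)}$ (legitimate since $T_p\Phi_a$ is a linear isomorphism) rewrites the right-hand side as $g_p(w,v)$. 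Thus $g_p((\grad E)_p,v)=g_p(w,v)$ for every $v$, and non-degeneracy of $g_p$ forces $w=(\grad E)_p$, which is exactly the displayed identity.

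With equivariance in hand, the remaining step is a short chain-rule computation. Let $X:I\to M$ solve $\dot X=-(\grad E)_X$ and set $Y:=\Phi_a\circ X$. Then
\[
	\dot Y(t)=T_{X(t)}\Phi_a\,\dot X(t)=-T_{X(t)}\Phi_a\,(\grad E)_{X(t)}=-(\grad E)_{\Phi_a(X(t))}=-(\grad E)_{Y(t)},
\]
so $Y$ is again a gradient flow trajectory with $Y(0)=\Phi_a(X(0))$; uniqueness of solutions then identifies $\Phi_a\circ X$ as the flow through the translated initial point, establishing the invariance asserted.

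The computation is purely formal, so the only genuine subtlety—and the hard part to phrase correctly—is that $M=\imm^1$ is infinite dimensional and $g$ is merely a weak Riemannian metric: for such metrics the map $v\mapsto g_p(\cdot,v)$ need not be surjective onto $T_p^*M$, so the gradient is not guaranteed to exist a priori. I would therefore read the statement as conditional on the existence of the gradient flow (which is what the phrase ``the gradient flow of $E$'' presupposes, and which is verified directly for the $H^1(ds)$ metric in Section 4.1); given existence, only the \emph{injectivity} of $g_p$ is used, to pin down $w=(\grad E)_p$. I note finally that freeness of the action plays no role in this argument—it is needed only later, to give the quotient a manifold structure—so the lemma in fact holds for any isometric action leaving $E$ invariant.
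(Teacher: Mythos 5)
Your proof is correct and follows essentially the same route as the paper: differentiate the invariance $E\circ\Phi_a=E$, use the isometry property of $g$ to deduce equivariance of the gradient, and then transport trajectories by the chain rule. Your side remarks are also apt — in particular, you are right that freeness is not actually needed (each $\Phi_a$ is a diffeomorphism with inverse $\Phi_{a^{-1}}$, so $T_p\Phi_a$ is automatically an isomorphism, whereas the paper's proof invokes freeness for this), and your caveat about existence of the gradient for a weak metric correctly identifies the implicit hypothesis.
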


\begin{proof}
Since $E(x)=E(\lambda x)$ for all $\lambda \in G$ we have
\[ dE_x=dE_{\lambda x}d\lambda_x \]
then equating
\begin{align*}
dE_xV &=\langle \grad E_x,V\rangle_x=\langle d\lambda_x\grad E_x,d\lambda_xV\rangle_{\lambda x}\\
dE_{\lambda x}d\lambda_x V &=\langle \grad E_{\lambda x},d\lambda_x V\rangle_{\lambda x}
\end{align*}
shows that $d\lambda_x\grad E_x=\grad E_{\lambda x}$ ($d\lambda_x$ has full rank because the action is free). Therefore if $X$ is a solution to $X_t=-\grad E_{X}$ then 
\[ (\lambda X)_t=-d\lambda_X \grad E_X=-\grad E_{\lambda X} \]
so $\lambda X$ is also a solution.
\end{proof}

To demonstrate we observe the following symmetries of the $H^1(ds)$ gradient flow.

\emph{Isometry}. 
An isometry $A:\R^2\to \R^2$ induces $A:\imm^1(\S^1,\R^2)\to \imm^1(\S^1,\R^2)$ by $A\gamma=A\circ \gamma$. Since an isometry is length preserving
\begin{align}\nonumber
\L(A\gamma)&=\L(\gamma)\\ 
\implies d\L_{A\gamma}dA_\gamma &=d\L_\gamma \label{lengthequivalence}
\end{align}
and similarly for the arc-length functions $s_{A\gamma}=s_\gamma \implies ds_{A\gamma}=ds_\gamma$. Hence, in the $H^1(ds)$ metric:
\begin{align*}
	\ip{dA_\gamma(\xi),dA_\gamma(\eta)}_{A\gamma}&= \int \langle dA_\gamma(\xi),dA_\gamma(\eta)\rangle ds_{A\gamma}+\int \langle \tfrac{d}{ds}dA_\gamma(\xi), \tfrac{d}{ds} dA_\gamma(\eta)\rangle ds_{A\gamma}\\
&= \ip{\xi,\eta}_\gamma
\end{align*}
i.e. the induced map $A:\imm^1(\S^1,\R^2)\to \imm^1(\S^1,\R^2)$ is an $H^1(ds)$ isometry. Now by Lemma \ref{LMsym} if $X$ is a solution of the $H^1(ds)$ gradient flow of $\L$ then so is $AX$.

\emph{Reparametrisation.}
Given $\phi \in \diff(\S^1)$ we have $\L(\gamma)=\L(\gamma\circ \phi)$ and the map $\Phi(\gamma)=\gamma\circ \phi$ is linear on $\imm^k(\S^1,\R^2)$ so $d\L_\gamma=d\L_{\Phi\gamma}\Phi$. Assuming for simplicity that $\phi'>0$ we have 
\begin{align*}
\ip{\Phi \xi,\Phi \eta}_{\Phi\gamma} &=
\int_{\S^1}\ip{\xi( \phi(u)),\eta(\phi(u))}|\gamma'(\phi)\phi'(u)|du\\
& \qquad + \int_{\S^1}\ip{\tfrac{1}{|y'(\phi)\phi'(u)|}\tfrac{d}{du}\xi( \phi(u)),\tfrac{1}{|y'(\phi)\phi'(u)|}\tfrac{d}{du}\eta(\phi(u))}|\gamma'(\phi)\phi'(u)|du\\	
&=\int_{\S^1}\ip{\xi( \phi),\eta(\phi)}|\gamma'(\phi)|d\phi\\
& \qquad + \int_{\S^1}\ip{\tfrac{1}{|y'(\phi)|}\tfrac{d}{d\phi}\xi( \phi),\tfrac{1}{|y'(\phi)|}\tfrac{d}{d\phi}\eta(\phi)}|\gamma'(\phi)|d\phi\\	
&=\ip{\xi,\eta}_\gamma
\end{align*}
so $\Phi$ is also an $H^1(ds)$ isometry and again by Lemma \ref{LMsym} the gradient flow is invariant under reparametrisation.

\emph{Scaling space-time.} 
For a dilation of $\R^2$ by $\lambda>0$ the length function scales $\L(\lambda x)=\lambda \L(x)$. To get a space-time scaling symmetry we need the metric to also be homogeneous. However the $H^1(ds)$ metric is not homogeneous:
\begin{align*}
 \ip{\lambda \xi,\lambda \eta}_{\lambda \gamma}&=
\int \ip{\lambda \xi,\lambda \eta}\lambda ds_\gamma+\int \ip{\frac{1}{\lambda}\frac{d}{ds_\gamma}\lambda \xi,\frac{1}{\lambda}\frac{d}{ds_\gamma}\lambda \eta}\lambda ds_\gamma \\
&= \lambda^3\int\ip{\xi,\eta}ds+\lambda\int\ip{\xi_s,\eta_s}ds
\end{align*}
If we want space-time scaling we need to use a different metric. For example the metric
\begin{align}
\ip{\xi,\eta}_{H^1(d\bar s)} :=\ip{h,k}_{L^2(d\bar s)}+\L^2\ip{\xi_s,\eta_s}_{L^2(d\bar s)} 	\label{eq:SYM1}
\end{align}
which is used in \cite{Sundaramoorthi2007} satisfies $\ip{\lambda \xi,\lambda \eta}_{\lambda\gamma}=\lambda^2\ip{\xi,\eta}_\gamma$, and then 
\[d\L_{\lambda\gamma} \xi=\ip{\grad\L_{\lambda \gamma}, \xi}_{\lambda\gamma}= \ip{\grad\L_{\lambda \gamma},\xi}_{\gamma}\]
\[d\L_{\gamma} \xi=\ip{\grad\L_{\gamma},\xi}_{\gamma} \]
Since $\L(\lambda \gamma)=\lambda \L(\gamma)$ we have 
\[ d\L_{\lambda \gamma}\lambda v= \frac{\partial}{\partial t}\L(\lambda \gamma+t\lambda v)=\lambda d\L_\gamma v \]
 i.e. $d\L_{\lambda \gamma} = d\L_\gamma $ and so from above $\grad\L_{\lambda \gamma}=\grad\L_{\gamma}$. Now if $X(u,t)$ is a solution to $X_t=-\grad \L_X$ then defining $\tilde X(u,t):=\lambda X(u,t/\lambda)$ we have
\[ \tilde X_t(u,t)=X_t(u,t/\lambda)=-\grad\L_{X(u,t/\lambda)}=-\grad\L_{\tilde X} \]



\section{The gradient flow for length with respect to the $H^1(ds)$ Riemannian  metric}

In this section our focus is on the $H^1(ds)$-gradient flow for length on a variety of spaces.

\subsection{Derivation, stationary solutions and circles}

The $H^1(ds)$ gradient of length is defined by
\begin{align*}
d\L_\gamma V=\ip{\grad\L_\gamma,V}_{H^1(ds)}
&=\int \ip{ \grad\L_\gamma,V}ds+\int \ip{ (\grad\L_\gamma)_s,V_s}ds\\
&=\int\ip{\grad\L_\gamma -(\grad\L_\gamma)_{ss},V}ds
\end{align*}
Comparing with \eqref{eq:dlength} the gradient of length with respect to the $H^1(ds)$ metric must satisfy weakly
\begin{equation}\label{eq:gradODE}
	(\grad\L_\gamma)_{ss}-\grad\L_\gamma=\frac{dT}{ds}
\,.
\end{equation} 
We solve this ODE in arc-length parametrisation using the Green's function method. Considering 
\begin{equation}\label{eq:fundamental}
G_{ss}(s,\tilde s)-G(s,\tilde s)=\delta(s-\tilde s)
\end{equation}
(again weakly) with $C^1$-periodic boundary conditions and the required discontinuity we find the Green's function
\begin{equation}\label{eq:greens1}
 G(s,\tilde s)=\frac{\cosh\left (|s-\tilde s|-\tfrac{\L}{2}\right)}{2\sinh(-\tfrac{\L}{2})}
\,.
 \end{equation}
(cf. \cite{Sundaramoorthi2007} eqn. (12) for the metric \eqref{eq:SYM1} above.) 
Then the solution to \eqref{eq:gradODE} is 
\begin{align}\label{eq:h1grad}
\grad\L_\gamma (s)=\int_0^\L \frac{dT}{d\tilde s}G(s,\tilde s)d\tilde s 
\,.
\end{align}
 We can integrate by parts twice in \eqref{eq:h1grad} to obtain
\[ \grad\L_\gamma (s)=\gamma(s)+\int_0^\L\gamma(\tilde s)G(s,\tilde s) d\tilde s \]
and we observe that it is not neccesary for $\gamma$ to have a second derivative. Indeed, using integration by parts and \eqref{eq:fundamental} we find
\begin{align*}
\ip{\grad\L_\gamma,V}_{H^1(ds)}&=\int \ip{\gamma+\int \gamma G\,d\tilde s,V} + \ip{\gamma_s+\int \gamma G_s \, d\tilde s ,V_s} \, ds \\
&= \int \ip{\gamma+\int \gamma (G-  G_{ss})\, d\tilde s,V}+\int \ip{\gamma_s,V_s}\, ds\\
&=\int \ip{\gamma_s,V_s}\, ds\\
&= d\L_\gamma V
\end{align*}

\begin{definition}

Consider a family of curves $X:\S^1\times(a,b)\rightarrow\R^2$ where for each $t\in(a,b)\subset \R$, $X(\cdot,t)\in \imm^1$. 
We term $X$ an \emph{$H^1(ds)$ curve shortening flow} if 
\begin{equation}\label{eq:h1csf}
 \partial_t X(s,t)=-X(s,t)-\int_0^\L X(\tilde s,t)G(X;s,\tilde s)d\tilde s
\end{equation}
where $G$ is given by
\begin{equation}
\label{EQdefG}
G(X;s,\tilde s)=\frac{\cosh\left (|s-\tilde s|-\tfrac{\L}{2}\right)}{2\sinh(-\tfrac{\L}{2})} \quad \text{ for } 0\leq s,\tilde s \leq \L
\,.
\end{equation}
Here we write $G(X;s,\tilde{s})$ to emphasize the dependence on the curve $X(.,t)$, but henceforth we will omit the first argument unless it is needed to avoid ambiguity.
\end{definition}

\begin{remark}\label{flowspace}
	Note that \eqref{eq:h1csf} makes sense on the larger space $H^1(\S^1,\R^2)\setminus\SC$ where
\[
\SC := \{X\in H^1(\S^1,\R^2)\,:\,\vn{X'(u)}_{L^2} = 0\}
\]
is the space of constant maps, provided we do not use the arc length parametrisation. That is, we consider
\begin{equation}\label{eq:ode}
\partial_t X(u,t)=F(X(u,t))\end{equation}
where $F$ is defined by
\begin{equation}
\label{EQgenldefs}
\begin{split}
	F(x;u)&:= -x(u)-\int_0^1 x(\tilde u)G(x;u,\tilde u)|x'(\tilde u)|\,d\tilde u \\
	G(x;u,\tilde u)&:= \frac{\cosh\left( |s_x(u)-s_x(\tilde u)|-\tfrac{\L(x)}{2} \right)}{2 \sinh\Big(-\tfrac{\L(x)}{2}\Big)}
\end{split}
\end{equation}
The constant maps are problematic: viewing $G$ as a map from $H^1(\S^1,\R^2)\times\S^1\times\S^1 \rightarrow\R$ we see that taking a sequence in the first variable toward the space of constant maps results in $-\infty$.
Then in the evolution equation \eqref{eq:h1csf}, the integral involving $G$ along such a sequence is not well-defined.
Most of the results that follow will be proved for this larger space $H^1(\S^1,\R^2)\setminus\SC$.  However, the interpretation of this flow as the $H^1(ds)$ gradient flow of length requires that we use the space $\imm^1$. This is so that $H^1(ds)$ is a Riemannian metric: the product \eqref{h1dsprod} is not positive definite at curves which are not immersed, and in fact is not necessarily well-defined because of the arc length derivatives. Moreover,  $\L$ is not differentiable outside of $\imm^1$. Nevertheless we proceed to study the flow mostly in the space $H^1(\S^1,\R^2)\setminus\SC$, but bear in mind that on this space it is some kind of pseudo-gradient. 
\end{remark}

We begin our study of the flow by considering stationary solutions and observing the evolution of circles.

\begin{lemma}
\label{LMstat}
There are no stationary solutions to the $H^1(ds)$ curve shortening flow.
\end{lemma}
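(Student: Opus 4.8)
The plan is to reduce the non-existence of stationary solutions to the elementary impossibility of a closed curve with constant unit tangent. A stationary solution is by definition a curve at which the right-hand side of \eqref{eq:h1csf} vanishes; since that right-hand side is exactly $-\grad\L_\gamma$, a stationary solution is precisely a critical point of length in the $H^1(ds)$ metric. On $\imm^1$ the product \eqref{h1dsprod} is a genuine positive-definite inner product, so this is equivalent to the vanishing of $d\L_\gamma$ itself, which by \eqref{eq:dlength} means $\int\langle V',T\rangle\,du=0$ for every variation $V$, i.e.\ the unit tangent $T=\gamma_s$ has vanishing weak derivative. I would therefore like to conclude that $T$ is a constant vector and derive a contradiction from closedness.

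The cleanest mechanism, which also handles the larger space, is to work directly from the flow equation rather than from $d\L$. Setting $\Psi(s):=\int_0^\L\gamma(\tilde s)\,G(s,\tilde s)\,d\tilde s$, stationarity reads $\gamma=-\Psi$. Applying $\partial_{ss}$ and using the distributional identity $\partial_{ss}G-G=\delta(s-\tilde s)$ from \eqref{eq:fundamental} gives $\Psi_{ss}=\Psi+\gamma$, so that $\Psi_{ss}=\Psi+\gamma=0$ because $\gamma=-\Psi$. Since $G$ is the $\L$-periodic Green's function, $\Psi$ is $\L$-periodic, and an affine periodic function is constant; hence $\Psi$, and therefore $\gamma=-\Psi$, is constant. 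This contradicts $\gamma\in\imm^1$ (equivalently $\gamma\notin\SC$). Alternatively one may invert the relation \eqref{eq:gradODE}: the operator $\partial_{ss}-1$ has spectrum in $(-\infty,-1]$ on periodic functions and so is invertible, whence $\grad\L_\gamma=0$ forces $\tfrac{dT}{ds}=0$ and $\gamma(s)=\gamma(0)+sT_0$ describes a line, incompatible with $0=\gamma(\L)-\gamma(0)=\L T_0$ and $|T|=1$.

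Finally I would address the larger space $H^1(\S^1,\R^2)\setminus\SC$ of Remark \ref{flowspace}, where $\L$ is not differentiable and arc length need not be a reparametrisation, so the direct computation is the natural one. Pushing the stationary equation $F(X)=0$ forward under the measure $|X'|\,du$ (legitimate even when $|X'|$ vanishes on a set) reduces it to $\hat X=-\int_0^\L\hat X\,G\,d\tilde s$ for the arc-length representative $\hat X$, and since the right-hand side depends on $u$ only through $s_X(u)$ we recover $\hat X=-\Psi$ exactly as above. The same $\Psi_{ss}=0$ and periodicity argument then yields $\hat X$, and hence $X$, constant, contradicting $X\notin\SC$. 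I do not expect a genuine obstacle here: the only points requiring care are the distributional justification of $\Psi_{ss}=\Psi+\gamma$ and the periodicity of $G$ guaranteeing that an affine periodic function is constant, and this essentially bookkeeping step is the crux of the proof.
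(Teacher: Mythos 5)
Your proposal is correct and its central argument -- differentiating the stationary equation $\gamma=-\int\gamma\,G\,d\tilde s$ twice, invoking the Green's function identity $G_{ss}-G=\delta$ to conclude $\gamma_{ss}=0$, and then using periodicity to force $\gamma$ constant, contradicting $\gamma\notin\SC$ -- is exactly the computation in the paper's proof (equation \eqref{EQpoint}). The surrounding remarks about $d\L_\gamma=0$, the invertibility of $\partial_{ss}-1$, and the extension to $H^1(\S^1,\R^2)\setminus\SC$ are sound but do not change the fact that this is essentially the same proof.
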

\begin{proof}
From \eqref{eq:h1csf}, a map $X\in H^1(\S^1,\R^2)$ is stationary if 
\[
	X(s) = -\int_0^\L X(\tilde s)G(s,\tilde s)d\tilde s
\,.
\]
The arc-length function is in $H^1(\S^1,\R)$ and so $G$ (see \eqref{EQdefG}) is in turn in $H^1(\S^1,\R)$.

Differentiating, we find
\[
	X_s(s) = -\int_0^\L X(\tilde s)G_s(s,\tilde s)d\tilde s
\,,
\]
and so the first derivative of $X$ exists classically. Iterating this with integration by parts shows that in fact all derivatives of $X$ exist and it is a smooth map.

Furthermore, examining the case of the second derivative in detail, we find (applying \eqref{eq:fundamental})
\begin{equation}
\label{EQpoint}
X_{ss}=-\int_0^\L X(\tilde s)G_{ss}(s, \tilde s)d\tilde s=-X(s)-\int_0^\L X(\tilde s)G(s,\tilde s)d\tilde s=0
\,.
\end{equation}
Since $X$ is periodic, this implies that $X$ must be the constant map. As explained in remark \ref{flowspace}, $G$ is singular at constant maps. 
\end{proof}


Let us now consider the case of a circle. Here, we see a stark difference to the case of the classical $L^2(ds)$ curve shortening flow.
\begin{lemma}
Under the $H^1(ds)$ curve shortening flow, an initial circle in $H^1(\S^1,\R^2)$ with any radius and any centre will
\begin{enumerate}
\item[(i)] Exist for all time; and
\item[(ii)] Shrink homothetically to a point as $t\rightarrow\infty$.
\end{enumerate}
\end{lemma}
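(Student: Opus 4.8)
The plan is to exploit the rotational symmetry of a circle to reduce the integro-differential flow \eqref{eq:h1csf} to a scalar ODE for the radius, solve that ODE explicitly, and read off both existence for all time and the shrinking behaviour from its solution. First, using the translation invariance of the flow (which follows from the isometry invariance established in Section~\ref{symmetries}, or directly from the fact that $\int_0^\L G(s,\tilde s)\,d\tilde s = -1$, so that the term $X + \int X G$ is unchanged under $X\mapsto X+a$), I would assume without loss of generality that the circle is centred at the origin. I write the evolving circle as $X(u,t) = r(t)\,\omega(u)$ with $\omega(u) = (\cos 2\pi u, \sin 2\pi u)$ the fixed unit parametrisation, so that $\partial_t X = \dot r\,\omega = \tfrac{\dot r}{r}X$.

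The key step is to show that $\grad\L$ of a circle of radius $r$ is radial and equal to $\tfrac{1}{1+r^2}X$. For this I would use that along a circle one computes directly $X_{ss} = \tfrac{dT}{ds} = kN = -\tfrac{1}{r^2}X$, and recall from \eqref{eq:gradODE} that $\grad\L$ is the $\L$-periodic solution of $(\grad\L)_{ss} - \grad\L = \tfrac{dT}{ds} = -\tfrac{1}{r^2}X$. Substituting the ansatz $\grad\L = \lambda X$ and using $X_{ss} = -\tfrac{1}{r^2}X$ gives $\lambda\bigl(-\tfrac1{r^2}-1\bigr) = -\tfrac1{r^2}$, hence $\lambda = \tfrac{1}{1+r^2}$; and since the homogeneous solutions $e^{\pm s}$ of $Y_{ss}-Y=0$ are not $\L$-periodic, this $\lambda X$ is the unique periodic solution. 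Thus $\grad\L = \tfrac{1}{1+r^2}X$, the flow reads $\partial_t X = -\tfrac{1}{1+r^2}X$, which is radial, confirming that the circle stays a circle centred at the origin and yielding the radius equation
\[
\dot r = -\frac{r}{1+r^2}, \qquad r(0) = r_0.
\]

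Finally I would solve this ODE by separation of variables. From $\bigl(\tfrac1r + r\bigr)\,dr = -\,dt$ one obtains $\log r^2 + r^2 = c - 2t$ with $c = \log r_0^2 + r_0^2$, whence $r^2 e^{r^2} = e^{c-2t}$ and, by the defining relation of the Lambert $W$ function, $r(t) = \sqrt{W(e^{c-2t})}$, matching the formula stated in the introduction. Because the right-hand side is smooth and bounded for $r>0$, with $\dot r<0$ and $\dot r \sim -r$ as $r\to 0^+$, the solution stays strictly positive and exists for all $t\in\R$ (in fact eternally, since $r(t)\to\infty$ as $t\to-\infty$), which gives (i); and $e^{c-2t}\to 0$ together with $W(0)=0$ forces $r(t)\to 0$ as $t\to\infty$, so the circle contracts homothetically to the origin, giving (ii).

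The main obstacle is the middle step: verifying that $\grad\L$ of a circle is radial and pinning down its coefficient $\tfrac{1}{1+r^2}$. The cleanest route is the periodicity argument above rather than directly integrating $\cos(\tilde s/r)\cosh(\tilde s-\tfrac\L2)$ against the Green's function, since non-periodicity of $e^{\pm s}$ immediately rules out the homogeneous part and isolates the particular solution. Once the flow is reduced to the scalar ODE, global (indeed eternal) existence, the Lambert $W$ representation, and the asymptotics $r\to 0$ are all routine.
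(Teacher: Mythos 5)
Your proposal is correct and follows essentially the same route as the paper: both reduce the flow on a circle to the scalar ODE $\dot r = -r/(1+r^2)$ by exploiting $X_{ss}=-\tfrac{1}{r^2}X$, then solve by separation of variables to obtain $r(t)=\sqrt{W(e^{c-2t})}$ and read off eternal existence and shrinking to a point. The only cosmetic difference is that you extract the coefficient $\tfrac{1}{1+r^2}$ by substituting the radial ansatz directly into the second-order ODE \eqref{eq:gradODE} for the gradient (with a periodicity argument for uniqueness), whereas the paper manipulates the integral representation \eqref{eq:h1grad} together with \eqref{eq:fundamental}; both yield the same identity $\grad\L_X=\tfrac{1}{1+r^2}X$.
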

\begin{proof}
We can immediately conclude from the symmetry of the length functional and the symmetry of the circle that the flow must evolve homothetically (see Lemma \ref{LMsym}).
We must calculate the evolution of the radius of the circle.

So, suppose that $X$ is an $H^1(ds)$ curve shortening flow of the form 
\begin{equation}\label{eq:shrinker}
	X(u,t)=r(t)(\cos u ,\sin u)
\end{equation}
with $r(0)>0$. Here $u$ is the arbitrary parameter and not the arclength variable.
Then $X(s,t)=r(t)(\cos(\frac{s}{r}),\sin(\frac{s}{r}))$ and $X_{ss}=-\frac{1}{r^2}X$.
Therefore
\begin{align*}
X_t(s,t)&=-\int X_{\tilde s\tilde s}(\tilde s,t)G(s, \tilde s)d\tilde s 
=\frac{1}{r^2}\int X(\tilde s,t)G(s, \tilde s)d\tilde s
\,.
\end{align*} 
Applying \eqref{eq:fundamental} and integrating by parts gives
\begin{align*}
X_t&=-\frac{1}{r^2}X_t-\frac{1}{r^2}	X
\,.
\end{align*} 
Differentiating \eqref{eq:shrinker} gives $X_t=\frac{\dot r}{r}X$ 
and then substituting into the above leads to the ODE for $r(t)$: 
\begin{align*}
\dot r=-\frac{r}{r^2+1} 
\,.
\end{align*}
Using separation of variables yields
$ r^2e^{r^2}=e^{-2t+c}$ (here $c = \log(r^2(0)e^{r^2(0)})$)
which has solutions
\[ r(t)=\pm \sqrt{W(e^{c-2t})} \]
where $W$ is the Lambert $W$ function (the inverse(s) of $xe^x$). 
Since $t\mapsto\sqrt{W(e^{c-2t})}$ is a monotonically decreasing function converging to zero as $t\rightarrow\infty$, this finishes the proof.
\end{proof}


\subsection{Existence and uniqueness}\label{ssexistence}

Now we turn to establishing existence and uniqueness for the $H^1(ds)$ curve
shortening flow.

For the initial data, we take it to be on the largest possible space for which \eqref{eq:h1csf} makes sense . As explained in Remark \ref{flowspace}, 
this is the space of maps $H^1(\S^1,\R^2)\setminus\SC$ where
\[
\SC := \{X\in H^1(\S^1,\R^2)\,:\,\vn{X'(u)}_{L^2} = 0\}
\]
is the space of constant maps.
We note that $\SC$ is generated by the action of translations in $\R^2$ applied
to the orbit of the diffeomorphism group at any particular constant map in
$H^1(\S^1,\R^2)$.
Since the orbit of the diffeomorphism group applied to a constant map is
trivial, the space $\SC$ turns out to be two-dimensional only.

The main result of this section is the following.

\begin{reptheorem}{TMglob}
For each $X_0\in H^1(\S^1,\R^2)\setminus\SC$ there exists a unique eternal
$H^1(ds)$ curve shortening flow $X:\S^1\times\R\rightarrow\R^2$ in
$C^1(\R; H^1(\S^1,\R^2)\setminus\SC)$ such that $X(\cdot,0) = X_0$.
\end{reptheorem}

This is proven in two parts.

\subsubsection{Local existence}

We begin with a local existence theorem.

\begin{theorem}\label{shortexistence}
For each $X_0\in H^1(\S^1,\R^2)\setminus\SC$ there exists a $T_0>0$ and unique $H^1(ds)$
curve shortening flow $X:\S^1\times[-T_0,T_0]\rightarrow\R^2$ in
$C^1([-T_0,T_0]; H^1(\S^1,\R^2)\setminus\SC)$ such that $X(\cdot,0) = X_0$.
\end{theorem}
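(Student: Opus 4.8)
The plan is to read the flow, in its parametrization-free form \eqref{eq:ode}--\eqref{EQgenldefs}, as an autonomous ordinary differential equation $\partial_t X = F(X)$ for the vector field $F$ on the open subset $H^1(\S^1,\R^2)\setminus\SC$ of the Banach space $H^1(\S^1,\R^2)$, and then to apply the Picard--Lindel\"of (Cauchy--Lipschitz) theorem for Banach-space-valued ODEs. Because the equation is autonomous and $F$ is defined on an open set, a local solution through $X_0$ runs in both time directions and therefore exists on a symmetric interval $[-T_0,T_0]$, giving a curve in $C^1([-T_0,T_0];H^1(\S^1,\R^2)\setminus\SC)$. Everything thus reduces to the single claim that $F$ is a well-defined, locally Lipschitz map from $H^1(\S^1,\R^2)\setminus\SC$ into $H^1(\S^1,\R^2)$.

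First I would record elementary Lipschitz estimates for the building blocks of $F$, all resting on the Sobolev embedding $H^1(\S^1)\hookrightarrow C^0(\S^1)$ (valid since $\dim\S^1 = 1$) and the normalization $|\S^1|=1$. The map $x\mapsto|x'|$ is $1$-Lipschitz from $H^1$ into $L^2$ by the pointwise inequality $\big||x'|-|y'|\big|\le|x'-y'|$; hence $\L$ and the arc-length function $s_x(u)=\int_0^u|x'|\,dv$ depend Lipschitz-continuously on $x$, with $|\L(x)-\L(y)|\le\|x'-y'\|_{L^2}$ and $\|s_x-s_y\|_{\infty}\le\|x'-y'\|_{L^2}$. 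The point of excluding $\SC$ is that $\L(X_0)>0$ whenever $X_0\notin\SC$, so by continuity $\L\ge c>0$ on a ball about $X_0$; this keeps $\sinh(-\L/2)$ bounded away from zero, so that the Green's function $G$ in \eqref{EQgenldefs}, a composition of $\cosh$, $\sinh$ and the reciprocal with the Lipschitz data $(s_x,\L(x))$, is locally Lipschitz from $H^1$ into $C^0(\S^1\times\S^1)$. Differentiating $G$ in $u$ moreover introduces the factor $s_x'(u)=|x'(u)|\in L^2$, so $G(x;\cdot,\tilde u)\in H^1(\S^1)$ with locally uniformly bounded norm.

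Assembling these, I would check that $F(x)\in H^1$ and depends locally Lipschitz-continuously on $x$. The term $-x$ plainly lies in $H^1$; the nonlocal term $-\int_0^1 x(\tilde u)\,G(x;u,\tilde u)\,|x'(\tilde u)|\,d\tilde u$ carries its $u$-dependence only through $G$, whose $\partial_u$-derivative is in $L^2$, so differentiation under the integral sign places this term in $H^1$ as well. Bounding $\|x\|_\infty$ and $\||x'|\|_{L^2}$ by $\|x\|_{H^1}$ and combining the product- and composition-Lipschitz bounds then yields $\|F(x)-F(y)\|_{H^1}\le C\|x-y\|_{H^1}$ on a neighborhood of $X_0$, and in fact $F$ is seen to be (Fr\'echet) $C^1$ there.

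The main obstacle is precisely this last bookkeeping: controlling the $H^1$-norm, and the Lipschitz dependence, of the nonlocal term. The difficulty is that the speed $|x'|$ is only an $L^2$ object, so no pointwise bound on it is available and one must resist passing to the arc-length parametrization (cf.\ Remark \ref{flowspace}), while the dependence of $G$ on $x$ is genuinely nonlocal through both the scalar $\L(x)$ and the entire function $s_x$. Once the lower bound $\L\ge c>0$ near $X_0$ is fixed, however, every nonlinearity becomes a composition with a smooth function on a bounded region and the estimates, though lengthy, are routine; with local Lipschitzness established, Picard--Lindel\"of supplies the unique $C^1([-T_0,T_0];H^1(\S^1,\R^2)\setminus\SC)$ solution.
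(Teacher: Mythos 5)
Your proposal is correct and follows essentially the same route as the paper: establish that $F$ is well-defined and locally Lipschitz from $H^1(\S^1,\R^2)\setminus\SC$ into $H^1(\S^1,\R^2)$ on a ball where $\L$ stays bounded away from zero (the paper's Lemmas \ref{LMh1f} and \ref{blip}), then invoke the Banach-space Picard--Lindel\"of theorem. The only cosmetic differences are that the paper stops at the Lipschitz estimate rather than asserting Fr\'echet differentiability of $F$, and obtains the $u$-derivative of the nonlocal term via the identity $G_s=-G_{\tilde s}$ and integration by parts rather than by differentiating under the integral sign directly.
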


The flow \eqref{eq:h1csf} is essentially a first-order ODE and so we will be able to establish this result by applying the Picard-Lindel\"of theorem in $H^1(\S^1,\R^2)\setminus\SC$.
(see \cite[Theorem 3.A]{Zeidler:1986aa}).
Note that this means we should not expect any kind of smoothing effect or other phenomena associated with diffusion-type equations such as the $L^2(ds)$ curve shortening flow.
Of course, we will need to show that the flow a-priori remains away from the problematic set $\SC$.

Recalling \eqref{EQgenldefs}, we observe the following regularity for $F$ in our setting.

\begin{lemma}
\label{LMh1f}
For any $x\in H^1(\S^1,\R^2)\setminus\SC$ consider $F$ and $G$ as defined in \eqref{EQgenldefs}.
Then $F(x) \in H^1(\S^1,\R^2)$.
\end{lemma}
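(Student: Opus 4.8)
The plan is to split the map as $F(x;u) = -x(u) - H(u)$, where
\[
H(u) := \int_0^1 x(\tilde u)\,G(x;u,\tilde u)\,|x'(\tilde u)|\,d\tilde u
\]
and $G$ is as in \eqref{EQgenldefs}. The first summand $-x$ already lies in $H^1(\S^1,\R^2)$ by hypothesis, so the whole statement reduces to showing $H\in H^1(\S^1,\R^2)$. The essential observation is that $H$ depends on $u$ only through the arclength function $s_x(u)=\int_0^u|x'(v)|\,dv$ appearing inside $G$, and that $s_x$ is absolutely continuous with $s_x'=|x'|\in L^2$.

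First I would record the elementary bounds. Since $x\notin\SC$ we have $\L=\L(x)>0$, so the denominator $2\sinh(-\tfrac{\L}{2})$ is a fixed nonzero constant; and for $s,\tilde s\in[0,\L]$ the argument $|s-\tilde s|-\tfrac{\L}{2}$ ranges over $[-\tfrac{\L}{2},\tfrac{\L}{2}]$, on which $\cosh$ and $\sinh$ are bounded. Hence both $G$ and the arclength derivative $G_s(s,\tilde s)=\frac{\sinh(|s-\tilde s|-\tfrac{\L}{2})}{2\sinh(-\tfrac{\L}{2})}\,\sign(s-\tilde s)$ are bounded. Combined with the Sobolev embedding $H^1(\S^1)\hookrightarrow C^0(\S^1)$ (so $x$ is bounded) and $|x'|\in L^1$, this gives $\vn{H}_\infty\le \vn{x}_\infty\,(\sup|G|)\,\L<\infty$, so in particular $H\in L^2$.

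For the derivative I would first show that for each fixed $\tilde u$ the map $u\mapsto G(x;u,\tilde u)$ lies in $H^1(\S^1)$, via the chain rule for the composition of a Lipschitz function with a $W^{1,2}$ function, applied to $\sigma\mapsto\cosh(|\sigma-s_x(\tilde u)|-\tfrac{\L}{2})$ (Lipschitz on the compact range $[0,\L]$ of $s_x$) and $u\mapsto s_x(u)$. This yields $\partial_u G(x;u,\tilde u)=G_s(s_x(u),s_x(\tilde u))\,|x'(u)|$ a.e., with the standard convention that the right-hand side vanishes wherever $|x'(u)|=0$, which disposes of the ambiguity of $\sign$ on the level sets of $s_x$. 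Then I would identify the weak derivative of $H$ by testing against $\phi\in C^\infty(\S^1)$: Fubini's theorem (justified by the bounds above) moves the $u$-integral inside, integration by parts in $u$ uses the $H^1$-regularity of $G(x;\cdot,\tilde u)$, and Fubini returns
\[
H'(u)=|x'(u)|\int_0^1 x(\tilde u)\,G_s(s_x(u),s_x(\tilde u))\,|x'(\tilde u)|\,d\tilde u.
\]
The bracketed integral is bounded in $u$ by the same estimates as for $H$, so $H'=|x'|\cdot(\text{bounded})\in L^2$ because $|x'|\in L^2$. Together with $H\in L^2$ this gives $H\in H^1$, and hence $F(x)\in H^1(\S^1,\R^2)$.

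The hard part will be the rigorous justification of the $u$-derivative of $G$: $s_x$ is merely $W^{1,2}$ rather than Lipschitz, and the absolute value produces a kink where $s_x(u)=s_x(\tilde u)$, so one cannot differentiate classically. The clean route is the Sobolev chain rule for Lipschitz--$W^{1,2}$ compositions, which simultaneously produces the formula for $\partial_u G$ and, through the vanishing-gradient convention, handles the $\sign$ ambiguity on the level sets. Everything else is routine bookkeeping with Fubini and the uniform bounds on $G$ and $G_s$.
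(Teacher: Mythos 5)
Your proof is correct and follows essentially the same route as the paper's: both compute $\partial_u F = -x_u - |x_u|\int_0^{\L} x\, G_s\, d\tilde s$ and bound $F$ and $F_u$ in $L^2$ using the boundedness of $G$ and $G_s$ together with $|x'|\in L^2$. The differences are cosmetic --- the paper uses the identity $\int_0^{\L}|G(s,\tilde s)|\,d\tilde s = 1$ and a rewriting of $F$ as $\int(x(s)-x(\tilde s))G\,d\tilde s$ to get bounds purely in terms of $\L(x)$ and $\vn{x_u}_{L^2}$, where you use sup-bounds on $G$, $G_s$ and $\vn{x}_\infty$; your explicit appeal to the Lipschitz--Sobolev chain rule to justify $\partial_u G = G_s\,|x'(u)|$ a.e.\ (and to dispose of the $\sign$ ambiguity on level sets of $s_x$) is a point the paper passes over silently.
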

\begin{proof}
The weak form of equation \eqref{eq:fundamental} implies continuity and symmetry of the Greens function $G$, as well as
\[
\int_0^\L G(s,\tilde s)\, ds =-1
\,.
\]
Note that there is a discontinuity in the first derivative of $G$ with respect to either variable.
Since $G$ is strictly negative we have $\int_0^\L |G(s,\tilde s)| d\tilde s=1$. 
Now
\begin{align*}
	F(x)&= -x-\int_0^\L x(\tilde s)G(s,\tilde s)\, d\tilde s\\
& =\int_0^\L (x(s)-x(\tilde s))G(s,\tilde s) \, d\tilde s \\
&=\int_0^\L \int_{\tilde s}^s x_s \, ds \, G(s,\tilde s) \, d\tilde s
\end{align*}
therefore
\begin{align}\label{eq:gradbound0}
|F(x)|\leq \int_0^\L |s-\tilde s||G(s,\tilde s)| \, d\tilde s \leq \L(x) 
\end{align}
and we have 
\begin{equation}\label{eq:Fbound0}
\norm{F(x)}_{L^2(du)}\leq \L(x)	
\,.
\end{equation}
For the derivative with respect to $u$, using $G_s=-G_{\tilde s}$ and integration by parts we find
\begin{align}\nonumber
F(x;u)_u&=-x_u-\enorm{x_u}\int_0^\L x(\tilde s)G_s(s,\tilde s) \, d\tilde s \\
&= -x_u-\enorm{x_u}\int_0^\L x_{\tilde s}(\tilde s)G(s,\tilde s) \, d\tilde s\,.   \label{eq:fu}
\end{align}
This implies 
\begin{equation}\label{eq:Fbound1}
	\norm{F(x)_u}_{L^2} \leq 2\norm{x_u}_{L^2}
\end{equation}
Since our convention is that $|\S^1| = 1$, we have $\L(x)\leq \norm{x}_{L^2}$, and so the inequalities \eqref{eq:Fbound0} and \eqref{eq:Fbound1} together show that $F(x)\in H^1(\S^1,\R^2)$.
\end{proof}

The $H^1$ regularity of $F$ from Lemma \ref{LMh1f} is locally uniform (for given initial data), with a Lipschitz estimate in $H^1$, as the following lemma shows.

\begin{lemma} \label{blip}
Given 
$x_0\in H^1(\S^1,\R^2)\setminus\SC$ let
\[
Q_b=\{x\in H^1(\S^1,\R^2)\setminus\SC\ :\  {\Vert x-x_0\Vert}_\h1 \leq b<\L(x_0) \}
\]
where $b>0$ is fixed. 
Then there exist constants $L\geq 0$ and $K>0$ depending on $\vn{x_0}_{H^1}$ such that 
\begin{align*}
	\norm{F(x)}_\h1  &< K, && \text{for all} \, x\in Q_b \\
\norm{F(x)-F(y)}_\h1  &\leq L \norm{x-y}_\h1, && \text{for all} \, x,y\in Q_b
\end{align*}
\end{lemma}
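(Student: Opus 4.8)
The plan is to treat the two assertions separately, noting that the uniform bound $\norm{F(x)}_\h1<K$ is essentially immediate from Lemma \ref{LMh1f}, while the Lipschitz estimate is the substantive part. Before anything else I would record the basic geometry of $Q_b$: for $x\in Q_b$ the triangle inequality $|\L(x)-\L(x_0)|\le\int_0^1\big||x'|-|x_0'|\big|\,du\le\norm{x-x_0}_\h1\le b$ gives the two-sided length bound $0<\L(x_0)-b=:\ell_{\min}\le\L(x)\le\L(x_0)+b=:\ell_{\max}$, where positivity of $\ell_{\min}$ uses the hypothesis $b<\L(x_0)$, together with $\norm{x}_\h1\le\norm{x_0}_\h1+b$. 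The lower bound $\ell_{\min}>0$ is the single most important fact, since it keeps every curve in $Q_b$ uniformly away from the constant maps where $G$ degenerates. The bound $K$ then follows at once from \eqref{eq:Fbound0} and \eqref{eq:Fbound1}, which give $\norm{F(x)}_\l2\le\L(x)\le\ell_{\max}$ and $\norm{F(x)_u}_\l2\le2\norm{x_u}_\l2\le2(\norm{x_0}_\h1+b)$, so one may take any $K$ exceeding $\sqrt{\ell_{\max}^2+4(\norm{x_0}_\h1+b)^2}$.

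For the Lipschitz estimate the first and decisive step is to isolate the dependence of $G$ on the curve. Writing $G(x;u,\tilde u)=g\big(|s_x(u)-s_x(\tilde u)|,\L(x)\big)$ with $g(\sigma,\ell):=-\cosh(\sigma-\tfrac\ell2)/(2\sinh(\tfrac\ell2))$, I would establish the pointwise estimate
\[
\sup_{u,\tilde u}\big|G(x;u,\tilde u)-G(y;u,\tilde u)\big|\le C_G\,\norm{x-y}_\h1\qquad(x,y\in Q_b).
\]
This reduces to two ingredients. First, the arc-length data are Lipschitz in the curve: both $|\L(x)-\L(y)|$ and $\big||s_x(u)-s_x(\tilde u)|-|s_y(u)-s_y(\tilde u)|\big|$ are dominated by $\int_0^1|x'-y'|\,du\le\norm{x-y}_\h1$, uniformly in $u,\tilde u$. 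Second, on the compact parameter region $\{(\sigma,\ell):0\le\sigma\le\ell_{\max},\ \ell_{\min}\le\ell\le\ell_{\max}\}$ the function $g$ is smooth (here $\ell_{\min}>0$ is used again) and hence has bounded partials; in fact $|\partial_\sigma g|\le\tfrac12$ directly from $|\sinh(\sigma-\tfrac\ell2)|\le\sinh(\tfrac\ell2)$ on $0\le\sigma\le\ell$. A mean value argument for $g$ combined with the two Lipschitz bounds then yields the display, with $C_G$ depending only on $\ell_{\min},\ell_{\max}$.

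With this in hand the remaining work is product-rule bookkeeping. For the $\l2$-part I would use the difference representation $F(x;u)=\int_0^1\big(x(u)-x(\tilde u)\big)G(x;u,\tilde u)|x'(\tilde u)|\,d\tilde u$, valid because $\int_0^\L G\,d\tilde s=-1$, subtract the analogue for $y$, and split the integrand into three differences according to the factors $x(u)-x(\tilde u)$, $G$, and $|x'|$. Each is controlled by the elementary bounds $|x(u)-x(\tilde u)|\le\L(x)$, $\int_0^1|G|\,|x'|\,d\tilde u=\int_0^\L|G|\,d\tilde s=1$, $|G|\le\tfrac12\coth(\tfrac{\ell_{\min}}2)$, $|(x-y)(u)-(x-y)(\tilde u)|\le\int_0^1|(x-y)'|\,dv\le\norm{x-y}_\h1$, and $\norm{|x'|-|y'|}_\l2\le\norm{x-y}_\h1$, together with the estimate for $G(x)-G(y)$ above. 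For the derivative part I would start from \eqref{eq:fu}, which in the $u$-variable reads $F(x)_u=-x'-|x'|\int_0^1x'(\tilde u)G(x;u,\tilde u)\,d\tilde u$, note that the inner integral is uniformly bounded by $1$ (again since $\int_0^\L|G|\,d\tilde s=1$), and split $F(x)_u-F(y)_u$ into the four terms arising from $x'-y'$, from $(|x'|-|y'|)$ times the inner integral, and from the two pieces of the difference of inner integrals; Cauchy--Schwarz in $\tilde u$ and the bound on $G(x)-G(y)$ dispatch each. Summing the $\l2$-norms of the value and derivative differences gives $\norm{F(x)-F(y)}_\h1\le L\norm{x-y}_\h1$ with $L$ depending on $\norm{x_0}_\h1$ and $b$ through $\ell_{\min},\ell_{\max}$.

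I expect the main obstacle to be precisely the uniform Lipschitz estimate for $G$ in the curve argument, since it is the only place where the genuinely nonlinear dependence of the flow on $x$, through both the arc-length reparametrisation $s_x$ and the total length $\L(x)$, meets the singularity of $G$ at $\SC$; everything else is multilinear estimation resting on the bounds already extracted in Lemma \ref{LMh1f}. The decisive quantitative input throughout is the hypothesis $b<\L(x_0)$, which guarantees $\ell_{\min}>0$ and hence uniform control of $\coth(\tfrac{\ell_{\min}}2)$ and of the derivatives of $g$.
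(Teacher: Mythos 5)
Your proposal is correct and follows essentially the same route as the paper: the two-sided length bound from $b<\L(x_0)$ keeping $Q_b$ away from $\SC$, a uniform Lipschitz estimate for $G$ in the curve argument obtained from the Lipschitz dependence of $s_x$ and $\L(x)$ together with smoothness of the $\cosh/\sinh$ expression on a region where the denominator is bounded below, and then a product-rule decomposition of $F(x)-F(y)$ and its derivative. The one minor divergence is that by working with the integrated-by-parts form $F(x)_u=-x'-|x'|\int_0^1 x'(\tilde u)G(x;u,\tilde u)\,d\tilde u$ you need only the Lipschitz property of $G$ itself, whereas the paper differentiates under the integral and must separately establish boundedness and Lipschitz continuity of the auxiliary kernel $A(x;u,\tilde u)$ appearing in $\partial_u G$ --- a small simplification in your favour.
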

\begin{proof}
To obtain the estimates and remain away from the problematic set $\SC$ it is
necessary that the length of each $x\in Q_b$ is bounded away from zero. This is
the reason for the upper bound on $b$. Indeed if $x\in Q_b$ then (note that
$|\S^1| = 1$ in our convention)
\begin{align*}
\big\vert \norm{x'}_{L^1}-\norm{x'_0}_{L^1} \big\vert \leq 
\norm{x'-x_0'}_{L^1}&\leq \norm{x'-x_0'}_{L^2}\leq \norm{x-x_0}_{H^1} \leq b
\end{align*}
hence
\begin{align}
\L(x_0)-b \leq \L(x)\leq \L(x_0)+b \label{eq:lengthbound}
\end{align}
and $\L(x_0)-b>0$ by assumption. It follows that $G(x;u,\tilde u)$ exists on $Q_b$ and since $s_x(u)\leq \L(x)$ we deduce
\begin{equation}\label{eq:Gest1}
|G(x;u,\tilde u)|\leq \frac{\cosh(\tfrac{\L(x)}{2})}{2\sinh(\tfrac{\L(x)}{2})} = \frac{1}{2} \coth\left(\tfrac{\L(x)}{2}\right)
\,.
\end{equation}
We will also need the derivative
\begin{equation}\label{eq:dG}
\partial_u G(x;u,\tilde u)=  \frac{\sinh\left ( |s_x(u)-s_x(\tilde u)|-\tfrac{\L(x)}{2} \right )}{2 \sinh(-\tfrac{\L(x)}{2})}\sign(u-\tilde u)|x'(u)|
\end{equation}
which obeys the estimate
\begin{equation}\label{eq:Gest2}
 \enorm{\partial_u G(x;u,\tilde u)}\leq \frac{1}{2}|x'(u)| 
\,.
\end{equation}
Since $G$ is well-defined on $Q_b$, we may use \eqref{eq:Fbound0} and \eqref{eq:Fbound1} from the proof of Lemma \ref{LMh1f} to obtain
\begin{equation} \label{fboundh1}
	 \norm{F(x)}_{H^1}\leq c\norm{x}_{H^1} \leq cb+c\norm{x_0}_{H^1}=:K
\end{equation}
for a constant $c>0$ and all $x\in Q_b$.  
  
As for the Lipschitz estimate, we will begin by studying the Lipschitz property for $G$.
First note that the arc length function is Lipschitz as a function on $H^1(\S^1,\R^2)$:
\[
\enorm{s_x(u)-s_y(u)}\leq\int_0^u \enorm{\, |x'(a)|- |y'(a)|\,}\,da
\leq \int_0^u\enorm{x'(a)-y'(a)}\,da
\leq \norm{x-y}_\h1
\]
and setting $u=1$ we also have
\[
 \enorm{\L(x)-\L(y)}\leq \norm{x-y}_\h1 \,.
\] 
For the numerator of $G$, note that $\cosh$ is smooth and its domain here is bounded via \eqref{eq:lengthbound}, and so there is a $c_1>0$ such that
\begin{align*}
	&\hskip-2cm\enorm{\cosh \left (\enorm{s_x(u)-s_x(\tilde u)}-\frac{\L(x)}{2}\right)-\cosh \left (\enorm{s_y(u)-s_y(\tilde u)}-\frac{\L(y)}{2}\right) }\\
&\leq c_1\enorm{\enorm{s_x(u)-s_x(\tilde u)}-\frac{\L(x)}{2}-\enorm{s_y(u)-s_y(\tilde u)}+\frac{\L(y)}{2}}\\
&\leq 3c_1\norm{x-y}_\h1
\end{align*}
A similar argument applies to the denominator $\sinh(-\L(x)/2)$ and moreover
inequality \eqref{eq:lengthbound} ensures that $\sinh(-\L(x)/2)$ is bounded
away from zero. Since the quotient of two Lipschitz functions is itself
Lipschitz provided the denominator is bounded away from zero, we have that $G$
is Lipschitz, i.e. there is a constant $c_2>0$ such that
\[
 \enorm{G(x;u,\tilde u)-G(y;u,\tilde u)}\leq c_2\norm{x-y}_\h1
\,.
\]
Now we have
\begin{align*}
F(x)(u) &- F(y)(u)\\
&=-x(u)+y(u)-\int_0^1 x(\tilde u)G(x;u,\tilde u)|x'(\tilde u)|-y(\tilde u)G(y;u,\tilde u)|y'(\tilde u)|\,d\tilde u
\\
&= \begin{multlined}[t]-(x(u)-y(u))
\\
-\int_0^1 (x-y)G(x)|x'|+y G(x)\left (|x'|-|y'| \right )+y|y'|\left (G(x)-G(y)\right)\, d\tilde u
\,.
\end{multlined}
\end{align*}
By \eqref{eq:lengthbound} and \eqref{eq:Gest1} there exists $c_0$ such that $|G(x;u,\tilde u)|\leq c_0$ for all $x\in Q_b$.
Then using the Lipschitz condition for $G$ we find
\begin{equation*} 
\begin{split}
&|F(x)-F(y)|\\
&\leq \begin{multlined}[t] |x-y|\\+\int_0^1 c_0 |x-y||x'|+c_0 |y||x'-y'|+c_2|y||y'|\norm{x-y}_\h1 \, d\tilde u \end{multlined}\\
&\leq \begin{multlined}[t] |x-y|+c_0\norm{x-y}_\l2 \norm{x'}_\l2+c_0 \norm{y}_\l2 \norm{x'-y'}_\l2 \\
 +c_2\norm{y}_\l2 \norm{y'}_\l2 \norm{x-y}_\h1
\end{multlined}
\end{split}
\end{equation*}
Therefore, recalling that $x,y\in Q_b$ satisfy  $\norm{x},\norm{y}\leq \norm{x_0}_1+b$, integrating the above gives
\begin{equation}
	\label{eq:Flip1}  \norm{F(x)-F(y)}_\l2\leq  \text{const} \norm{x-y}_\h1 
\end{equation}
We need a similar result for
\begin{multline}\label{eq:Fdash}
F(x)'(u)-F(y)'(u)=\\-x'(u)+y'(u)- \int_0^1 (x-y)\partial_uG(x)\norm{x'}+y \partial_uG(x)\left (\norm{x'}-\norm{y'} \right )\\+y\norm{y'}\left (\partial_uG(x)-\partial_uG(y)\right ) d\tilde u
\end{multline}
Comparing \eqref{eq:dG}, define 
\[
 A(x;u,\tilde u):=\frac{\sinh\left ( |s_x(u)-s_x(\tilde u)|-\tfrac{\L(x)}{2} \right )}{2 \sinh(-\tfrac{\L(x)}{2})}
\]
so that 
\[\partial_u G(u,\tilde u)=A(x;u,\tilde u)\norm{x'(u)}\sign(u-\tilde u)
\,.
\]
Then as in $\eqref{eq:Gest2}$ we have $|A(x)(u,\tilde u)|\leq \frac{1}{2}$ and arguing as for $G$ above we also have that $A$ is Lipschitz:
\[ |A(x;u,\tilde u)-A(y;u,\tilde u)|\leq \text{const} \norm{x-y}_\h1 \]
Now
\begin{align*}
\int_0^1  \enorm{ \partial_uG(x)-\partial_uG(y)}d\tilde u
&= \int_0^1\enorm{A(x) \vphantom{\frac{}{}} \enorm{x'(u)}-A(y)\enorm{y'(u)}}d\tilde u \\
&=\int_0^1\enorm{\vphantom{\frac{}{}} \left (A(x)-A(y)\right )\enorm{x'(u)}+A(y)\left (\enorm{x'(u)}-\enorm{y'(u)}\right )}d\tilde u\\
&\leq \enorm{x'(u)}\int_0^1\enorm{A(x)-A(y)}\,d\tilde u +\enorm{x'(u)-y'(u)}\int_0^1\enorm{A(y)}d\tilde u\\
&\leq \text{const}\enorm{x'(u)}\norm{x-y}_\h1+\tfrac{1}{2}\enorm{x'(u)-y'(u)}
\,.
\end{align*}
Using this estimate in \eqref{eq:Fdash}, together with \eqref{eq:Gest2} gives
\begin{multline*}
\enorm{F(x)'(u)-F(y)'(u)}\\
\leq \enorm{x'(u)-y'(u)}+\tfrac{1}{2}\enorm{x'(u)}\norm{x-y}_\l2\norm{x'}_\l2+\tfrac{1}{2}\enorm{x'(u)}\norm{y}_\l2\norm{x'-y'}_\l2\\
+ \left( \text{const}\enorm{x'(u)}\norm{x-y}_\h1+\tfrac{1}{2}\enorm{x'(u)-y'(u)}\right )\norm{y}_\l2\norm{y'}_\l2
\end{multline*}
and then
\begin{multline}\label{eq:Flip2}
 \norm{F(x)'-F(y)'}_\l2\\
\begin{split}
& \leq \norm{x-y}_\l2+ \tfrac{1}{2}\norm{x'}_\l2^2\norm{x-y}_\l2 +\tfrac{1}{2}\norm{x'}_\l2\norm{y}_\l2\norm{x'-y'}_\l2  \\
  & \qquad + \text{const}\norm{x'}_\l2 \norm{y}_\l2\norm{y'}_\l2\norm{x-y}_\h1+\tfrac{1}{2}\norm{x'-y'}_\l2\norm{y} _\l2\norm{y'}_\l2 
\\
& \leq \text{const} \norm{x-y}_\h1 
\,.
\end{split}
\end{multline}
Combining \eqref{eq:Flip1} and \eqref{eq:Flip2} gives the required estimate, there exists $L$ such that
\[  \norm{F(x)-F(y)}_\h1\leq L \norm{x-y}_\h1 \]
for all $x,y \in Q_b$.
\end{proof}

\begin{proof}[Proof of Theorem \ref{shortexistence}]
According to the generalised (to Banach space) Picard-Lindel\"of theorem in
\cite{Zeidler:1986aa} (Theorem 3.A), the estimates in Lemma \ref{blip}
guarantee existence and uniqueness of a solution on the interval provided 
$KT_0 < b$.
\end{proof}

\subsubsection{Global existence} \label{ssglobal}


We may extend the existence interval by repeated applications of the
Picard-Lindel\"of theorem from \cite{Zeidler:1986aa}.

There are two issues to be resolved for this.
First, the constants $K$ and $L$ from Lemma \ref{blip} depend on the $H^1$-norm of $x_0$.
When we attempt to continue the solution, we must show that in the forward and
backward time directions this norm does not explode in finite time to
$+\infty$.

Second, the flow must remain within $\mathcal{Q}_b$ for some $b$; as the
evolution continues forward, length is decreasing, and so the amount of time
that we can extend depends not only on the $H^1$-norm of the solution but also
the length bound from below.
In the backward time direction length is in fact \emph{increasing}, so this
second issue does not arise there.

First, we study the $L^\infty$-norm of the solution.

\begin{lemma}
\label{uniformLinf} 
Let $X$ be an $H^1(ds)$ curve shortening flow defined on some interval $(-T,T)$.
Then $\norm{X(t)}_\infty $ is non-increasing on $(-T,T)$.
Furthermore, we have the estimate
\[
	\vn{X(t)}_\infty \le e^{-2t}\vn{X(0)}_\infty\,,\quad\text{for $t<0$}
	\,.
\]
\end{lemma}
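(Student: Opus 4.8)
The plan is to run a maximum principle on the scalar quantity $\phi(t):=\norm{X(t)}_\infty^2=\max_{u\in\S^1}\enorm{X(u,t)}^2$. Since the flow produces $X\in C^1((-T,T);H^1(\S^1,\R^2)\setminus\SC)$ and $H^1(\S^1,\R^2)$ embeds continuously into $C^0$, the function $f(u,t):=\enorm{X(u,t)}^2$ is continuous with time derivative $\partial_t f(u,t)=2\ip{X(u,t),F(X(t))(u)}$ jointly continuous in $(u,t)$; here we use that $F(X(t))\in H^1\hookrightarrow C^0$ by Lemma \ref{LMh1f} together with the continuity of $F$. Consequently $\phi$ is locally Lipschitz, hence differentiable almost everywhere, and by the standard envelope lemma, at each differentiability point $t$ one has $\phi'(t)=\partial_t f(u_0,t)$ for any spatial maximiser $u_0$ (because $t\mapsto\phi(t)-f(u_0,t)$ is nonnegative with a zero at $t$). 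It therefore suffices to bound $\partial_t f$ at a maximiser.

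To do this I would use the rewriting of $F$ from the proof of Lemma \ref{LMh1f}. Writing $s=s_X(u)$ for arc length and using $\int_0^\L G(s,\tilde s)\,d\tilde s=-1$, we have $F(X)(u)=\int_0^\L (X(s)-X(\tilde s))\,G(s,\tilde s)\,d\tilde s$, so that at a maximiser $s_0$,
\begin{equation*}
\partial_t f = 2\int_0^\L \ip{X(s_0),\,X(s_0)-X(\tilde s)}\,G(s_0,\tilde s)\,d\tilde s.
\end{equation*}
Setting $M=\norm{X(t)}_\infty$, Cauchy-Schwarz together with $\enorm{X(\tilde s)}\le M=\enorm{X(s_0)}$ gives $-M^2\le \ip{X(s_0),X(\tilde s)}\le M^2$, whence
\begin{equation*}
0\le \ip{X(s_0),\,X(s_0)-X(\tilde s)}\le 2M^2.
\end{equation*}
Because $G<0$ and $\int_0^\L\enorm{G(s_0,\tilde s)}\,d\tilde s=1$, multiplying the integrand by $G$ and integrating yields the two-sided pointwise bound $-4M^2\le\partial_t f\le 0$, that is, $-4\phi(t)\le\phi'(t)\le 0$ for almost every $t$.

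The two conclusions then follow by integrating these differential inequalities, which is legitimate since $\phi$ is absolutely continuous. The upper bound $\phi'\le 0$ gives immediately that $\phi$, and hence $\norm{X(t)}_\infty$, is non-increasing on $(-T,T)$. For the quantitative backward estimate, the lower bound $\phi'\ge -4\phi$ is equivalent to $(e^{4t}\phi)'\ge 0$, so $e^{4t}\phi(t)$ is non-decreasing; since its value at $t=0$ is $\phi(0)$, for $t<0$ we obtain $e^{4t}\phi(t)\le\phi(0)$, i.e.\ $\phi(t)\le e^{-4t}\phi(0)$, and taking square roots produces $\norm{X(t)}_\infty\le e^{-2t}\norm{X(0)}_\infty$.

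I expect the only genuinely delicate point to be the rigorous differentiation of the supremum $\phi$: one must justify its Lipschitz regularity and the identity $\phi'=\partial_t f$ at maximisers, for which the joint continuity of $\partial_t f$ (coming from $F(X)\in C^0$) is the essential ingredient. The algebraic core --- recognising that the negativity of $G$ combined with the maximality of $\enorm{X(s_0)}$ forces the integrand to have a definite sign --- is then straightforward, and the remainder reduces to integrating a linear differential inequality.
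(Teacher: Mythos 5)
Your proposal is correct and follows essentially the same route as the paper: differentiate $\enorm{X}^2$ at a spatial maximiser, use $G<0$ together with $\int_0^{\L}\enorm{G(s_0,\tilde s)}\,d\tilde s=1$ to obtain $-4\norm{X}_\infty^2\le\tfrac{d}{dt}\enorm{X}^2\le 0$ there, and integrate the resulting differential inequalities (the backward estimate via the monotonicity of $e^{4t}\norm{X}_\infty^2$). Your handling of the differentiation of the supremum via the envelope (Hamilton's trick) lemma is if anything more careful than the paper's continuity argument, but the substance is identical.
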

\begin{proof}
In the forward time direction, we proceed as follows for the uniform bound.
For any $t_0\in (-T,T)$ there exists $u_0$ such that $\norm{X(t_0)}_\infty =|X(t_0,u_0)| $ and then 
\begin{align*}
\left. \frac{d}{dt} |X(t,u)|^2 \right|_{(t_0,u_0)}
&= 2\ip{X(t_0,u_0), X_t(t_0,u_0)} \\
&= -2|X(t_0,u_0)|^2 - 2\ip{X(t_0,u_0), \int X(t_0,\tilde s)G(s_0, \tilde s) d\, \tilde s}\\
&= -2 \norm{X(t_0)}_\infty^2-2\ip{X(t_0,u_0), \int X(t_0,\tilde s)G(s_0, \tilde s) d\, \tilde s}\\
&\leq -2 \norm{X(t_0)}_\infty^2+2 \norm{X(t_0)}_\infty^2 \int G(s_0, \tilde s) d\, \tilde s\\
&\leq 0.
\end{align*}
Now let $t_1 =\sup \{ t\geq t_0: \norm{X(t)}_{L^\infty}=\enorm{X(t,u_0)} \}$. By the inequality above $\norm{X(t)}_\infty$ is non-increasing for all $t\in [t_0,t_1)$, and by the continuity of $X$ in $t$, $\lim_{t\to t_1} \norm{X(t)}_{L^\infty}=\norm{X(t_1)}_{L^\infty}$. Since $t_0$ was arbitrary, it follows that $\norm{X(t)}_{\infty}$ cannot increase at any $t$.

In the backward time direction, we need an estimate from below.
Let us calculate
\begin{align*}
\frac{d}{dt} \big(e^{t} X(t, u)\big)
	&= e^{t} \Big(
		 (-X(t, u) + X(t, u))
		- \int X(t, \tilde s)G(s,\tilde s)\,d\tilde s
		\Big)
\\
	&= -e^{t} 
		\int X(t, \tilde s)G(s,\tilde s)\,d\tilde s
\end{align*}
so ($u_0$ as before)
\begin{align*}
\frac{d}{dt} \big( e^{2t} |X(t, u)|^2\big) \bigg|_{(t_0,u_0)}
	&= -2e^{2t_0} 
		\ip{X(t_0,u_0), \int X(t_0, \tilde s)G(s_0,\tilde s)\,d\tilde s}
\\
	&\ge -2e^{2t_0} \vn{X(t_0)}_\infty^2
	= -2e^{2t_0} |X(t_0,u_0)|^2
\,.
\end{align*}
Hence 
\[
\frac{d}{dt}\left (e^{4t}|X(t,u)|^2\right )\bigg|_{(t_0,u_0)}\geq 0
\]
and integrating from $t$ to $0$ (assuming $t<0$, and $u_0$ changing as necessary) this translates to
\[
\vn{X(t)}_\infty^2 \le 
e^{-4t}\vn{X(0)}_\infty^2
\]
This is the claimed estimate in the statement of the lemma.
\end{proof}

\begin{lemma} \label{uniformH1flat}
Let $X$ be an $H^1(ds)$ curve shortening flow defined on some interval $(-T,T)$.
Then $\norm{X_u(t)}_\l2$ is non-increasing on $(-T,T)$. 
Furthermore, we have the estimate
\[
	\vn{X_u(t)}_\l2 \le e^{-t}\vn{X_u(0)}_\l2\,,\quad\text{for $t<0$}
	\,.
\]
\end{lemma}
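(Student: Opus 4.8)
The plan is to run the argument of Lemma~\ref{uniformLinf}, but on the integrated quantity $\norm{X_u(t)}_\l2^2=\int_0^1\enorm{X_u}^2\,du$ instead of at a pointwise maximum. Writing the flow \eqref{eq:h1csf} as $\partial_tX=-X-Z$ with $Z(s):=\int_0^\L X(\tilde s)G(s,\tilde s)\,d\tilde s$, and differentiating with the help of \eqref{eq:fu} (which already performs the integration by parts turning $\int XG_s$ into $\int X_{\tilde s}G=Z_s$), I would first record the pointwise identity
\[
\partial_t\enorm{X_u}^2=-2\enorm{X_u}^2\big(1+\ip{X_s,Z_s}\big),
\]
obtained from $X_u=\enorm{X_u}X_s$. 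Integrating over $u$ then gives
\[
\ddt\norm{X_u}_\l2^2=-2\norm{X_u}_\l2^2-2\int_0^1\enorm{X_u}^2\ip{X_s,Z_s}\,du.
\]

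For monotonicity I would use only a magnitude bound. Since $\enorm{X_s}=1$ and $\int_0^\L\enorm{G(s,\tilde s)}\,d\tilde s=1$ (established in the proof of Lemma~\ref{LMh1f}), we have $\enorm{Z_s}\le1$, hence $\enorm{\ip{X_s,Z_s}}\le1$ and $1+\ip{X_s,Z_s}\ge0$. The pointwise identity then yields $\partial_t\enorm{X_u}\le0$ everywhere, so $\norm{X_u(t)}_\l2$ (indeed $\enorm{X_u}$ itself, and thus every $L^p$ norm of $X_u$) is non-increasing.

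The quantitative estimate for $t<0$ is where the real work lies. Setting $g(t):=\norm{e^tX_u(t)}_\l2^2$ one computes $g'(t)=-2e^{2t}\int_0^1\enorm{X_u}^2\ip{X_s,Z_s}\,du$, so the assertion $\norm{X_u(t)}_\l2\le e^{-t}\norm{X_u(0)}_\l2$ is equivalent to $g$ being non-decreasing on $(-T,0]$, i.e. to $\int_0^1\enorm{X_u}^2\ip{X_s,Z_s}\,du\le0$. The magnitude bound $\enorm{\ip{X_s,Z_s}}\le1$ only gives $1+\ip{X_s,Z_s}\le2$ and hence the weaker rate $e^{-2t}$ (exactly as in Lemma~\ref{uniformLinf}); to reach $e^{-t}$ I must exploit the \emph{sign} of the cross term. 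This comes from \eqref{eq:fundamental}, which identifies $Z=(\partial_{ss}-1)^{-1}X$ as a negative operator: using $Z_{ss}-Z=X$, so that $X_s=Z_{sss}-Z_s$, together with periodicity,
\[
\int_0^\L\ip{X_s,Z_s}\,ds=\int_0^\L\ip{Z_{sss}-Z_s,Z_s}\,ds=-\int_0^\L\big(\enorm{Z_{ss}}^2+\enorm{Z_s}^2\big)\,ds\le0,
\]
the integrations by parts being legitimate because $Z$ inherits the smoothness of $G$ even for $H^1$ data. Granting the displayed cross term is non-positive, Gr\"onwall applied to $g$ closes the estimate.

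The step I expect to be the main obstacle is passing from this arclength identity to the quantity the flow actually controls. The identity bounds the \emph{unweighted} integral $\int_0^\L\ip{X_s,Z_s}\,ds$, whereas the cross term above is the \emph{speed-weighted} $\int_0^\L\enorm{X_u}\ip{X_s,Z_s}\,ds$, since $\enorm{X_s}=1$ but the speed $\enorm{X_u}$ depends on the parametrisation. I would try either to show the weighted form inherits the sign, or to reduce to the arclength parametrisation while tracking how the flat $L^2$ norm transforms. As a guide, the weight cancels entirely for $\norm{X_u}_{L^1}=\L$, where $\enorm{X_u}\,du=ds$ gives $\ddt\L\ge-\L$ and the $e^{-t}$ bound immediately; the remaining content is to transfer this gain to $L^2$.
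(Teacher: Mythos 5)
Your forward-in-time argument is correct and is essentially the paper's: the paper differentiates $\int_0^1\enorm{X_u}^2\,du$ directly, uses \eqref{eq:fu}, and bounds the cross term via $\enorm{X_{\tilde s}}=1$ and $\int_0^{\L}|G|\,d\tilde s=1$; your pointwise identity $\partial_t\enorm{X_u}^2=-2\enorm{X_u}^2\big(1+\ip{X_s,Z_s}\big)\le0$ is a mild strengthening of the same computation.

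The backward estimate is where the problem lies, and although you have located the difficulty honestly, the gap is genuine and your proposed fix does not close it. The sign identity $\int_0^{\L}\ip{X_s,Z_s}\,ds=-\int\big(\enorm{Z_{ss}}^2+\enorm{Z_s}^2\big)\,ds\le0$ is fine, but the quantity appearing in $\ddt\norm{X_u}_{\l2(du)}^2$ is the speed-weighted integral $\int_0^{\L}\enorm{X_u}\ip{X_s,Z_s}\,ds$, and this does not inherit the sign: pointwise $\ip{X_s,Z_s}=-\ip{T,(1-\partial_{ss})^{-1}T}$, and the non-negativity of $\int\ip{T,(1-\partial_{ss})^{-1}T}\,ds$ is a statement about a positive-definite quadratic form, not a pointwise inequality; since the weight $\enorm{X_u}$, as a function of $s$, is an essentially arbitrary positive function determined by the initial parametrisation, it can be concentrated where the integrand has the wrong sign, so there is no parametrisation-invariant reason for the weighted integral to be non-positive. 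The paper does not attempt any of this: its proof simply uses $\enorm{Z_s}\le1$ and Cauchy--Schwarz to get $\frac{d}{dt}\big(e^{2t}\norm{X_u}_{\l2}^2\big)\ge-2e^{2t}\norm{X_u}_{\l2}^2$, and then integrates ``as in Lemma \ref{uniformLinf}'' --- which delivers $\norm{X_u(t)}_{\l2}\le e^{-2t}\norm{X_u(0)}_{\l2}$ for $t<0$, not $e^{-t}$. In other words, the rate you are straining to reach is not the one the paper's own argument produces (the exponent in the statement appears to be a typo; $e^{-2t}$ is what matches the backward estimate of Lemma \ref{uniformLinf} and is all that Corollary \ref{apriori} requires). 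The correct move is to drop the sign argument, keep the magnitude bound, and record the rate $e^{-2t}$.
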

\begin{proof}
As in \eqref{eq:fu} 
\begin{align*}
X_{tu}
&=-X_u-\enorm{X_u(u)}\int_0^\L X_{\tilde s} G\, d\tilde s
\end{align*}
and therefore, recalling that $\int_0^\L |G(s,\tilde s)| d\tilde s=1$, 
\begin{align*}
\frac{d}{dt}\int_0^1 \enorm{X_u}^2 \, du &=2\int_0^1 \ip{X_{ut}, X_u} \, du
\\
&=-2\int_0^1 \ip{X_u, X_u} \, du -2\int_0^1 \ip{X_u, \enorm{X_u}\int_0^\L X_{\tilde s} G\, d\tilde s\,} du
\\
&\leq  -2\norm{X_u}_{L^2}^2+ 2\int_0^1 \enorm{X_u}^2\int_0^\L |G| \, d\tilde s \, du
\\
&\leq 0
\,.
\end{align*}
This settles the forward time estimate.
As before, for the backward time estimate we need a lower bound.
We calculate
\begin{align*}
\frac{d}{dt}\bigg(e^{2t}\int_0^1 \enorm{X_u}^2 \, du\bigg)
&=2e^{2t}\int_0^1 \ip{X_u, \enorm{X_u}\int_0^\L X_{\tilde s} G\, d\tilde s}\, du
\\
&\ge  -2e^{2t}\norm{X_u}_{L^2}^2
\,.
\end{align*}
The same integration as in the backward time estimate for Lemma
\ref{uniformLinf} yields the claimed backward in time estimate.
\end{proof}

The estimates of Lemmata \ref{uniformLinf}, \ref{uniformH1flat} yield the following control on the $H^1$-norm of the solution.

\begin{corollary}
\label{apriori}
Let $X$ be an $H^1(ds)$ curve shortening flow defined on some interval $(-T,T)$. 
Then
\[
 \norm{X(t)}_\h1^2 \leq \norm{X(0)}_\h1^2\,,\qquad \text{for all } t\in [0,T)
\,,
\]
and
\[
 \norm{X(t)}_\h1^2 \leq \norm{X(0)}_\h1^2e^{-4t}\,,\qquad \text{for all } t\in (-T,0)
\,.
\]

\end{corollary}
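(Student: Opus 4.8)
The plan is to extract the two summands of $\norm{X(t)}_\h1^2 = \norm{X(t)}_\l2^2 + \norm{X_u(t)}_\l2^2$ separately from the two preceding monotonicity lemmas and then to package the resulting bounds, the only genuine work being the exponent bookkeeping in the backward-time regime.

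First I would dispose of the first-derivative summand, which is handled outright by Lemma \ref{uniformH1flat}: for $t\ge 0$ it gives $\norm{X_u(t)}_\l2^2 \le \norm{X_u(0)}_\l2^2$, while for $t<0$ it gives $\norm{X_u(t)}_\l2^2 \le e^{-2t}\norm{X_u(0)}_\l2^2$. For the zeroth-order summand I would exploit the normalisation $|\S^1|=1$, which yields the elementary pointwise-to-integral bound $\norm{X(t)}_\l2 \le \norm{X(t)}_\infty$; Lemma \ref{uniformLinf} then controls the right-hand side, giving $\norm{X(t)}_\l2^2 \le \norm{X(0)}_\infty^2$ for $t\ge 0$ and $\norm{X(t)}_\l2^2 \le e^{-4t}\norm{X(0)}_\infty^2$ for $t<0$. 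Thus in forward time each summand is dominated by its value at $t=0$ — the gradient term exactly, and the zeroth-order term through the sup-norm — and summing yields the non-increase of the full $H^1$-norm. Here the passage from the $L^2$ to the $L^\infty$ norm of $X$ is the one place the unit-length convention is used, and it is the natural route since Lemma \ref{uniformLinf} is phrased in $L^\infty$ rather than $L^2$.

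For the backward-time estimate the two lemmas deliver different exponential rates — a factor $e^{-2t}$ from the gradient term and $e^{-4t}$ from the sup-norm term — so the concluding step is to reconcile them. The key observation is that for $t\le 0$ one has $-2t \le -4t$ and hence $e^{-2t}\le e^{-4t}$, so both summands may be absorbed into a common factor $e^{-4t}$ multiplying the initial data. I expect this reconciliation of rates to be the only point requiring any care: the analytic content is already fully contained in Lemmata \ref{uniformLinf} and \ref{uniformH1flat}, and everything else is the elementary Sobolev inequality $\norm{\cdot}_\l2 \le \norm{\cdot}_\infty$ together with addition of the two inequalities.
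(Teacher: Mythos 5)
Your route is the one the paper itself intends (it offers no explicit proof, merely asserting that Lemmata \ref{uniformLinf} and \ref{uniformH1flat} "yield" the control), and your treatment of the derivative summand and of the rate reconciliation $e^{-2t}\le e^{-4t}$ for $t\le 0$ is correct. There is, however, a genuine gap in the zeroth-order term. The chain
\[
\norm{X(t)}_{\l2}^2 \le \norm{X(t)}_\infty^2 \le \norm{X(0)}_\infty^2
\]
bounds $\norm{X(t)}_{\l2}^2$ by the \emph{sup}-norm of the initial datum, and on a unit-measure circle the comparison between the two norms of $X(0)$ goes the wrong way: $\norm{X(0)}_{\l2}\le\norm{X(0)}_\infty$, not the reverse. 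So your claim that "each summand is dominated by its value at $t=0$" fails for the zeroth-order summand, and what your argument actually delivers is
\[
\norm{X(t)}_{\h1}^2 \le \norm{X(0)}_\infty^2 + \norm{X_u(0)}_{\l2}^2
\]
(and $e^{-4t}$ times this for $t<0$), a quantity which dominates, rather than is dominated by, $\norm{X(0)}_{\h1}^2$.

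In fairness, this defect appears to be inherited from the corollary as stated: Lemma \ref{uniformLinf} only controls the maximum point, not $|X(t,u)|$ pointwise, so no combination of the two cited lemmata produces $\norm{X(t)}_{\l2}^2\le\norm{X(0)}_{\l2}^2$; and the obvious direct Gronwall attempt does not close either, since
\[
\tfrac{d}{dt}\norm{X}_{\l2}^2 = -2\norm{X}_{\l2}^2 - 2\int_0^1\Big\langle X(u),\int_0^\L X(\tilde s)G(s,\tilde s)\,d\tilde s\Big\rangle\,du
\]
with the inner integral bounded only by $\norm{X}_\infty$ yields the estimate $2\norm{X}_{L^1}\norm{X}_\infty$ for the cross term, which always exceeds $2\norm{X}_{\l2}^2$. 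The bound you do obtain is a perfectly good a priori bound purely in terms of the initial data, and it suffices for the only later use of the corollary (excluding finite-time blow-up of the $H^1$-norm in Theorem \ref{TMglob}); but to get the statement exactly as written you would need either to accept the extra $\norm{X(0)}_\infty^2$ (equivalently, a multiplicative constant via the embedding $H^1\hookrightarrow L^\infty$) or to supply a separate argument for monotonicity of $\norm{X(t)}_{\l2}$, which is not contained in the two lemmata you invoke.
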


A similar technique allows us to show also that if the initial data for the flow is an immersion, it remains an immersion.

\begin{lemma}\label{immersion}
Let $X$ be an $H^1(ds)$ curve shortening flow defined on some interval $(-T,T)$ with $X(0)\in \imm^1$.
Then $X(t)\in \IM$ for all $t\in (-T,T)$.
\end{lemma}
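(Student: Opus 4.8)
The plan is to control the pointwise speed $v(u,t):=\enorm{X_u(u,t)}^2$ along the flow and to show it stays bounded away from zero on every compact time interval, so that the immersion condition $\enorm{X_u}\neq0$ cannot degenerate. Since $X(0)\in\imm^1$ and $\S^1$ is compact, $m_0:=\inf_{u}\enorm{X_u(u,0)}^2>0$, and the target is a pointwise inequality of the form $v(u,t)\ge v(u,0)e^{-4t}$.

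First I would differentiate $v$ in time, using the explicit expression \eqref{eq:fu} for $X_{tu}=F(X)_u$. Writing $W:=\int_0^\L X_{\tilde s}\,G(s,\tilde s)\,d\tilde s$, this gives
\[
\partial_t v=2\ip{X_u,X_{tu}}=-2\enorm{X_u}^2-2\enorm{X_u}\,\ip{X_u,W}.
\]
The crucial point is that the integrand $X_{\tilde s}$ is the \emph{unit} tangent $T$, so $\enorm{X_{\tilde s}}=1$; combined with $\int_0^\L\enorm{G(s,\tilde s)}\,d\tilde s=1$ from the proof of Lemma \ref{LMh1f}, this yields $\enorm{W}\le1$. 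Applying Cauchy--Schwarz to $\ip{X_u,W}$ then bounds the cross term $-2\enorm{X_u}\ip{X_u,W}$ in absolute value by $2\enorm{X_u}^2$, so that simultaneously
\[
-4v\le\partial_t v\le0.
\]

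The upper bound shows $v(u,\cdot)$ is non-increasing, which already handles the backward time direction: $v(u,t)\ge v(u,0)\ge m_0>0$ for $t\le0$. The lower bound, integrated pointwise by Gr\"onwall's inequality, gives $v(u,t)\ge v(u,0)e^{-4t}\ge m_0e^{-4t}>0$ for $t\ge0$. In either case $\inf_u\enorm{X_u(u,t)}>0$, so $X(t)\in\IM$ throughout $(-T,T)$.

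The main obstacle is not the estimate but the regularity bookkeeping needed to make the pointwise-in-$u$ argument legitimate: a priori the flow lies only in $C^1((-T,T);H^1(\S^1,\R^2))$, where $X_u(t)\in L^2$ and $\enorm{X_u(u,t)}$ has no classical pointwise meaning. To close this I would first upgrade regularity, observing from \eqref{eq:fu} that $F$ maps $\imm^1$ into $C^1$ (for an immersion $X_{\tilde s}=T$ is continuous and $G$ depends continuously on $u$, so $F(X)_u$ is continuous), and hence that the flow with $C^1$ initial data remains in $C^1$; then $v$ is a genuine $C^1$ function of $(u,t)$ and the Gr\"onwall step above is rigorous. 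Alternatively one can run the whole argument in integrated form and invoke a comparison principle, but the substance is entirely in the bound $\enorm{W}\le1$.
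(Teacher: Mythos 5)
Your proof is correct and follows the same overall strategy as the paper's: differentiate $\enorm{X_u}^2$ in time using \eqref{eq:fu}, bound the convolution term, and apply Gr\"onwall in both time directions. The only substantive difference is the bound on $W=\int_0^\L X_{\tilde s}G\,d\tilde s$. You use $\enorm{X_{\tilde s}}=1$ together with $\int_0^\L\enorm{G}\,d\tilde s=1$ to get $\enorm{W}\le1$, hence $-4v\le\partial_t v\le 0$; the paper instead exploits the closedness of the curve, writing $\int_0^\L X_{\tilde s}G\,d\tilde s=\int_0^\L X_{\tilde s}\left(G(s,\tilde s)-G(s,s)\right)d\tilde s$ since $\int_0^\L X_{\tilde s}\,d\tilde s=0$, and then uses the Lipschitz bound $\enorm{G_s}\le\tfrac12$ to obtain $\enorm{W}\le\L^2/2$, giving $(-2-\L^2)v\le\partial_t v\le(-2+\L^2)v$ (this is \eqref{eq:TGbound} and \eqref{eq:xuineq}). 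For the present lemma both bounds do the job, and yours is simpler. The paper's sharper, length-dependent bound is not gratuitous, though: it is reused later (e.g.\ in \eqref{eq:ddtxu} and \eqref{eq:Xuest}) to show that $\enorm{X_u}$ decays at rate exactly $e^{-t}$ up to constants, which is what makes the exponentially rescaled profile $Y=e^t(X-X(t,0))$ uniformly immersed; your crude rate $e^{-2t}$ on $v$ would not suffice there. Your remark on the regularity bookkeeping is fair --- the paper glosses over the same point and works with the pointwise ODE directly --- and your proposed fixes (upgrade to $C^1$, which the paper does separately in Section 5.4, or argue in integrated form) are both viable.
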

\begin{proof}
Using \eqref{eq:fu} we have
\begin{equation}\label{eq:xuevo}
\frac{d}{dt}|X_u|^2=2\langle X_{ut},X_u \rangle = -2|X_u|^2-2|X_u|\langle X_u,\int_0^\L X_{\tilde s} G \, d\tilde s\rangle 	
\,.
\end{equation} 
Now since $\int_0^\L X_s \,ds=0$ we have
\begin{align*}
	\int_0^\L X_s(\tilde s) G \, ds&=\int_0^\L X_s(\tilde s)G(s,\tilde s) d\tilde s-G(s,s)\int_0^\L X_s(\tilde s)\,d\tilde s \\
&=\int_0^\L X_s (G(s,\tilde s)-G(s,s)) \, d\tilde s \\
&=\int_0^\L X_s \int_s^{\tilde s} G_{\tau}(s,\tau)d\tau d\tilde s
\end{align*}
so using $|G_s|\leq \frac{1}{2}$ (cf. \eqref{eq:Gest2}) we find
\begin{equation}\label{eq:TGbound}
\bigg|\int_0^\L X_s(\tilde s) G \, ds\bigg|
\leq 
	\int \frac{1}{2}|s-\tilde s| \, d\tilde s \leq \L^2/2 
\,.
\end{equation}

Using this estimate with \eqref{eq:xuevo} yields
\begin{align}\label{eq:xuineq}
	(-2- \L^2)\enorm{X_u} &\leq \frac{d}{dt}\enorm{X_u}^2\leq (-2+\L^2)\enorm{X_u}^2
\,.
\end{align}
From $\frac{d\L}{dt}=-\norm{\grad \L_X}^2_{H^1(ds)}$ we know that $\L$ is non-increasing and so rearranging the inequality on the left and multiplying by an exponential factor gives
\[	0\leq \frac{d}{dt}\left (e^{(2+\L(0)^2)t}\enorm{X_u}^2\right ) \, .\]
Now integrating from $0$ to $t>0$ gives
\begin{equation}\label{eq:ximm}
	|X_u(0,u)|^2 e^{-(2+\L(0)^2)t}\leq |X_u(t,u)|^2 
\end{equation}
Then since $X_u$ is initially an immersion, it remains so for $t>0$.
The estimate backward in time is analogous but we instead use the second inequality in \eqref{eq:xuineq} and integrate from $t<0$ to $0$.
The statement is
\begin{equation}\label{eq:ximmback}
	|X_u(0,u)|^2 e^{-(\L(0)^2-2)t}\leq |X_u(t,u)|^2 
\end{equation}
for $t<0$.

\end{proof}

\begin{corollary}\label{lengthbelow}
Let $X$ be an $H^1(ds)$ curve shortening flow defined on some interval $(-T,T)$ with $T<\infty$.
Then there exists $\varepsilon>0$ such that $\L(X(t))>\varepsilon$ for all $t\in (-T,T)$.
\end{corollary}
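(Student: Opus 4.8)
The idea is that on a \emph{finite} time interval the reaction coefficient in the pointwise evolution inequality \eqref{eq:xuineq} for $|X_u|^2$ is uniformly controlled, so $|X_u|$ cannot be extinguished in finite time; integrating this over $u$ then bounds the length from below. The essential hypothesis is $T<\infty$: the circle solutions show that when $T=\infty$ the length really does decay to zero, so no such $\varepsilon$ can exist in general.

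First I would obtain a uniform upper bound on the length over the whole interval. Since $\L(X(t))\le\norm{X_u(t)}_\l2\le\norm{X(t)}_\h1$ (Cauchy--Schwarz, using $|\S^1|=1$), Corollary \ref{apriori} gives $\norm{X(t)}_\h1\le\norm{X(0)}_\h1 e^{-2t}\le\norm{X(0)}_\h1 e^{2T}$ for every $t\in(-T,T)$, whence $\L(X(t))\le M:=\norm{X(0)}_\h1 e^{2T}$ on all of $(-T,T)$. This is the only place the finiteness of $T$ is used.

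Next I would feed $\L\le M$ into the two-sided differential inequality \eqref{eq:xuineq}, which holds pointwise for any $H^1(ds)$ flow away from $\SC$ (its derivation via \eqref{eq:fu} and the bound $|G_s|\le\tfrac12$ does not use immersedness). This yields $\tfrac{d}{dt}|X_u|^2\ge -(2+M^2)|X_u|^2$ and $\tfrac{d}{dt}|X_u|^2\le (M^2-2)|X_u|^2$ throughout $(-T,T)$. A Grönwall argument in the forward direction (via the first inequality) and in the backward direction (via the second), using $t\in(-T,T)$ and $2+M^2\ge|M^2-2|$, then gives the lower bound, \emph{uniform in $u$},
\[
	|X_u(t,u)|^2\ge e^{-(2+M^2)T}\,|X_u(0,u)|^2,\qquad t\in(-T,T).
\]
This is exactly estimates \eqref{eq:ximm} and \eqref{eq:ximmback} with the $t$-dependent coefficient replaced by its uniform bound $M$.

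Finally, integrating the square root of this inequality over $u\in[0,1]$ gives
\[
	\L(X(t))=\int_0^1 |X_u(t,u)|\,du\ge e^{-(2+M^2)T/2}\,\L(X(0)),
\]
and since $X(0)\notin\SC$ we have $\L(X(0))>0$; setting $\varepsilon:=\tfrac12 e^{-(2+M^2)T/2}\L(X(0))>0$ then gives $\L(X(t))>\varepsilon$ and completes the argument. I expect the main obstacle to be that the coefficient in \eqref{eq:xuineq} depends on $\L(t)$ itself, so one cannot integrate the inequality blindly; the argument closes only because $T<\infty$ forces this coefficient to stay bounded, and because the resulting bound on $|X_u|$ is uniform in $u$ and can therefore be transported through the length integral.
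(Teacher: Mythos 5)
Your proof is correct and rests on the same mechanism as the paper's: the pointwise Gr\"onwall bound for $|X_u|^2$ coming from \eqref{eq:xuineq}, integrated over $u$ to bound $\L$ from below, with $T<\infty$ keeping the coefficient (and hence the decay factor) finite. The only cosmetic difference is that the paper handles the backward direction by simply invoking monotonicity of $\L$ (so $\L(t)\ge\L(0)$ for $t<0$) and uses $\L\le\L(0)$ forward in time, whereas you run Gr\"onwall in both directions with the uniform bound $M$ from Corollary \ref{apriori}; both close the argument.
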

\begin{proof}
For $t\ge 0$, taking the square root in \eqref{eq:ximm} and then integrating over $u$ gives
\[
 \L(0)e^{-(1+\L(0)^2/2)t} \leq \L(t)
\,.
\]
Since $\L(t)$ is non-increasing the result follows.
\end{proof}

\begin{theorem}
\label{TMglob}
Any $H^1(ds)$ curve shortening flow defined on some interval $(-T,T)$ with $T<\infty$ may be extended to all $t\in (-\infty,\infty)$. 
\end{theorem}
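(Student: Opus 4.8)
The plan is to run a standard continuation argument for the ODE \eqref{eq:ode} in the Banach space $H^1(\S^1,\R^2)$, using the local existence and uniqueness result of Theorem \ref{shortexistence} together with the a-priori bounds already established. The only way the flow could fail to extend past a finite endpoint of $(-T,T)$ is if, as $t$ approaches that endpoint, either the $H^1$-norm of $X(t)$ blows up, or $X(t)$ approaches the forbidden set $\SC$ (equivalently $\L(X(t))\to 0$, since $\L(X)>0$ is exactly the condition $X\notin\SC$). I will show that on a finite interval neither can occur, and that the local existence time is uniform, which lets me step past the endpoints.

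First I would record the two relevant uniform bounds on $(-T,T)$. By Corollary \ref{apriori} the $H^1$-norm is controlled by $\norm{X(t)}_\h1 \le \norm{X(0)}_\h1\, e^{2T} =: M$ for all $t\in(-T,T)$, which is finite because $T<\infty$. By Corollary \ref{lengthbelow} there is an $\varepsilon>0$ with $\L(X(t))>\varepsilon$ throughout $(-T,T)$, so the solution stays a fixed distance from $\SC$ on the whole interval.

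The key observation is that the local existence time produced by Theorem \ref{shortexistence} depends on the initial data only through the constant $K$ of Lemma \ref{blip} and the admissible radius $b$, which in turn is constrained only by $b<\L(x_0)$. Taking any $t_0\in(-T,T)$ as a new initial time and choosing $b=\varepsilon/2$, the constraint $b<\L(X(t_0))$ holds because $\L(X(t_0))>\varepsilon$, and by \eqref{fboundh1} the constant satisfies $K\le c(b+\norm{X(t_0)}_\h1)\le c(\varepsilon/2+M)$, which is bounded uniformly in $t_0$. Since the Picard--Lindel\"of theorem guarantees a solution on $[t_0-T_0,t_0+T_0]$ whenever $KT_0<b$, there is a single $\delta>0$, independent of $t_0$, for which the flow started at $X(t_0)$ exists on $[t_0-\delta,t_0+\delta]$.

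To finish, I would choose $t_0\in(T-\delta,T)$ and apply this uniform local existence to produce a flow on $[t_0-\delta,t_0+\delta]$ with $t_0+\delta>T$; by the uniqueness in Theorem \ref{shortexistence} it agrees with $X$ on the overlap and hence extends $X$ beyond $T$. The identical argument with $t_0\in(-T,-T+\delta)$ extends the flow past $-T$. Consequently no finite interval can be maximal, so the maximal interval of existence is all of $(-\infty,\infty)$. The step requiring the most care is the uniformity of $\delta$: one must confirm that both the $H^1$-bound $M$ and the length lower bound $\varepsilon$ are genuinely uniform over the entire finite interval (they are, by the two corollaries), so that $K$ and $b$, and therefore $\delta$, do not degenerate as $t_0$ approaches the endpoints. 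I note finally that the backward estimate in Corollary \ref{apriori} allows $\norm{X(t)}_\h1\to\infty$ as $t\to-\infty$; this is not an obstruction, since each extension only ever invokes the finite bound $M$ on a finite interval.
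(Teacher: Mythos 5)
Your proposal is correct and follows essentially the same route as the paper: a continuation argument via Theorem \ref{shortexistence}, with the two possible failure modes (length collapsing to zero, $H^1$-norm blowing up) ruled out on any finite interval by Corollary \ref{lengthbelow} and Corollary \ref{apriori} respectively. Your version merely makes explicit the uniform lower bound on the local existence time $\delta$ (via the uniform choices $b=\varepsilon/2$ and $K\le c(\varepsilon/2+M)$), a point the paper leaves implicit in its appeal to the ``standard continuation procedure.''
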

\begin{proof}
According to Lemma \ref{blip} and Theorem \ref{shortexistence}, given $x_0\in
H^1(\S^1,\R^2)$, for $0<\varepsilon_0<\frac{\L(x_0)}{K}$ there is a unique
solution $X(t,u)$ for $t\in [t_0-\varepsilon_0,t_0+\varepsilon_0]$.
Following the standard continuation procedure, one takes
$x_{\pm1}:=X(t_0\pm\varepsilon_0)$ as the initial condition for a new application of
Theorem \ref{shortexistence} with existence time $\varepsilon_1<\L(x_1)/K_1$, and so on.

Take $\overline{T}$ to be the maximal time such that the flow $X$ can be extended forward: $t\in (-T, \overline{T})$.
If $\overline{T} <\infty$, then one or more of the following have occurred:
\begin{itemize}
\item $\L(X(t))\searrow0$ as $t\nearrow\overline{T}$;
\item $\vn{X(t)}_{H^1}\rightarrow\infty$ as $t\nearrow\overline{T}$.
\end{itemize}
The first possibility is excluded by Corollary \ref{lengthbelow}, and the second is excluded by Corollary \ref{apriori} (we use $\overline{T}<\infty$ here).
This is a contradiction, so we must have that $\overline{T} = \infty$.

The argument in the backward time direction is completely analogous: suppose 
$\overline{T}$ is the maximal time such that the flow $X$ can be extended backward: $t\in (\overline{T}, T)$.
If $\overline{T} > -\infty$, then one or more of the following have occurred:
\begin{itemize}
\item $\L(X(t))\rightarrow0$ as $t\searrow\overline{T}$;
\item $\vn{X(t)}_{H^1}\rightarrow\infty$ as $t\searrow\overline{T}$.
\end{itemize}
The first possibility is excluded by the fact that the flow decreases length.
The second is excluded by Corollary \ref{apriori} (we again use $\overline{T}>-\infty$ here).
This is a contradiction, so we must have that $\overline{T} = -\infty$.
\end{proof}

\subsection{Convergence}\label{ssconvergence} 

In this subsection we examine the forward in time limit for the flow.
The backward limit is not expected to have nice properties.
One way to see this is in the $H^1(ds)$ length of the tail of an $H^1(ds)$
curve shortening flow.
(We will see in Lemma \ref{h1dslength} that the $H^1(ds)$ length of any forward
trajectory is finite.)
For instance, a circle evolving under the flow has radius $r(t) =
\sqrt{W(e^{c-2t})}$.
The $H^1(ds)$ length is larger than the $L^2(ds)$ length, and this grows (as
$t\rightarrow-\infty$) linear in $W(e^{c-2t})$.
This is not bounded, and so in particular the $H^1(ds)$ length of any negative
tail is unbounded.

Throughout we let $X:(-\infty,\infty)\to H^1(\S^1,\R^2)$ be a solution to the $H^1(ds)$ curve shortening flow \eqref{eq:h1csf}.
We will prove $\lim_{t\to \infty} X(t)$ exists and is equal to a constant map. In order to use the $H^1(ds)$ gradient (see Remark  \ref{flowspace})  we present the proof for the case where $X(t)$ is immersed, but the results can be extended to the flow in $H^1(\S^1,\R^2)\setminus \mathcal{C}$ as described in Remark \ref{extension} below.

It will be convenient to define $K$ by 
\[
K(x):=\frac{\cosh (|x|-\L/2)}{2\sinh(\L/2)}, \quad x\in [0,\L]\,,
\]
so that $G=-K(|s-\tilde s|)$,  $K(x)>0$ and  $\int_0^\L K(x)\,dx = 1$.
We take the periodic extension of $K$ to all of $\R$, which we still denote by $K$, and then
\begin{equation}\label{eq:intK}
	\int_0^\L K(s-\tilde s) d s=\int_{-\tilde s}^{\L-\tilde s}K(x)dx=1
\end{equation}
Define, for any $\gamma\in H^1(\S^1,\R^2)$, 
\[
 (\gamma \ast K)(s) :=\int_{\S^1} \gamma (\tilde s)K(s-\tilde s) \, d\tilde s \,.
\]
This is a so-called `nonlinear' (in $\gamma$) convolution.
We have the following version of Young's convolution inequality.
\begin{lemma}
For any $\gamma\in H^1(\S^1,\R^2)$ and $K$,$*$ as above, we have
\begin{align}
\norm{\gamma \ast K }_{L^2(ds)}& \leq	 \norm{ \gamma}_{L^2(ds)} \label{eq:convolution} 
\,.
\end{align}
\end{lemma}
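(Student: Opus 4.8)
The plan is to treat this exactly as the classical Young convolution inequality for a nonnegative kernel of unit mass. The only apparent wrinkle is that $K$ depends on $\gamma$ through the length $\L$, making the map $\gamma\mapsto\gamma\ast K$ nonlinear; but once $\gamma$ is fixed, $\L$ and hence the function $\tilde s\mapsto K(s-\tilde s)$ are fixed, so for the purposes of this single estimate we may regard $K$ as a fixed nonnegative $\L$-periodic kernel and the nonlinearity plays no role. I would first record the only two properties of $K$ that are needed: that $K\ge 0$, which is immediate from the definition, and that integrating over a full period normalises to $1$ in \emph{either} variable, i.e. $\int_0^\L K(s-\tilde s)\,d\tilde s=\int_0^\L K(s-\tilde s)\,ds=1$. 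The second equality is exactly \eqref{eq:intK}, applied once as stated and once with the roles of $s$ and $\tilde s$ interchanged, which is legitimate because $K$ is even and $\L$-periodic.

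The core of the argument is a pointwise estimate. For each fixed $s$, the measure $d\mu_s(\tilde s):=K(s-\tilde s)\,d\tilde s$ is a probability measure on $[0,\L]$ by the normalisation above. Using $K\ge 0$ to pass the modulus inside the integral, and then Jensen's inequality for the convex function $t\mapsto t^2$ against $\mu_s$ (equivalently, Cauchy--Schwarz with weight $K(s-\tilde s)\,d\tilde s$), I obtain
\[
\enorm{(\gamma\ast K)(s)}^2 \le \Big(\int_0^\L \enorm{\gamma(\tilde s)}\,K(s-\tilde s)\,d\tilde s\Big)^2 \le \int_0^\L \enorm{\gamma(\tilde s)}^2\,K(s-\tilde s)\,d\tilde s.
\]

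Finally I would integrate this inequality in $s$ over $[0,\L]$, interchange the order of integration by Fubini--Tonelli (valid since the integrand is nonnegative), and collapse the $s$-integral using $\int_0^\L K(s-\tilde s)\,ds=1$:
\[
\norm{\gamma\ast K}_{L^2(ds)}^2 \le \int_0^\L \enorm{\gamma(\tilde s)}^2\Big(\int_0^\L K(s-\tilde s)\,ds\Big)d\tilde s = \int_0^\L \enorm{\gamma(\tilde s)}^2\,d\tilde s = \norm{\gamma}_{L^2(ds)}^2,
\]
and the claim follows by taking square roots. There is no genuine obstacle here; the only point requiring a moment's care is confirming that the unit-mass normalisation \eqref{eq:intK} holds when the integration is taken in the first variable as well as the second, which is guaranteed by the evenness and $\L$-periodicity of $K$.
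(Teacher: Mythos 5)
Your argument is correct and is essentially the paper's proof: the Jensen step against the probability measure $K(s-\tilde s)\,d\tilde s$ is exactly the weighted Cauchy--Schwarz (H\"older) estimate the authors use, followed by the same Tonelli interchange and the normalisation $\int_0^\L K(s-\tilde s)\,ds=1$. Your explicit remark that the unit-mass property holds in either variable is a welcome clarification of a point the paper uses implicitly.
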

\begin{proof}
Write
\[ \enorm{\gamma(\tilde s)}|K(s-\tilde s)|=\left (\enorm{\gamma(\tilde s)}^2|K(s-\tilde s)|\right )^{\frac{1}{2}}|K(s-\tilde s)|^{\frac{1}{2}} \]
then by the H\"older inequality and \eqref{eq:intK}
\begin{equation*}
	\int_0^\L \enorm{\gamma(\tilde s)}|K(s-\tilde s)|\, d\tilde s \leq \left ( \int \enorm{\gamma(\tilde s)}^2|K(s-\tilde s)| \,d\tilde s \right)^{\frac{1}{2}}
\,.
\end{equation*}
Hence 
\begin{align*}
	\norm{\gamma \ast K }^2_{L^2(ds)}& \leq  \int \left (\int \enorm{\gamma(\tilde s)}|K(s-\tilde s)| \,d\tilde s \right )^2 ds \\ 
&\leq \int \int \enorm{\gamma(\tilde s)}^2|K(s-\tilde s)| \,d\tilde s ds\\ 
&\leq \int\enorm{\gamma(\tilde s)}^2\int |K(s-\tilde s)|\, ds\,d\tilde s\\
&\leq \norm{ \gamma}^2_{L^2(ds)} 
\,.
\end{align*}
\end{proof}

The convolution inequality implies the following a-priori estimate in $L^2$.

\begin{lemma} \label{l2apriori}
Let $X$ be an $H^1(ds)$ curve shortening flow.
Then $\norm {X(t)}_{L^2(ds)} $ is non-increasing as a function of $t$. 
\end{lemma}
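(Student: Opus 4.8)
The plan is to differentiate $E(t):=\norm{X(t)}_{L^2(ds)}^2=\int_0^1\enorm{X}^2\enorm{X_u}\,du$ directly, taking care that the arc-length measure $ds=\enorm{X_u}\,du$ itself evolves in time, and then to dispatch every resulting term using the convolution inequality \eqref{eq:convolution}. First I would rewrite the flow \eqref{eq:h1csf} in convolution form: since $G=-K(\enorm{s-\tilde s})$, it reads $X_t=-X+X\ast K$. Differentiating under the integral sign,
\[
E'(t)=\int_0^1 2\ip{X,X_t}\enorm{X_u}\,du+\int_0^1\enorm{X}^2\,\partial_t\enorm{X_u}\,du,
\]
so that the first summand is exactly $2\ip{X,-X+X\ast K}_{L^2(ds)}$, while the second records the motion of the measure.

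For the second summand I would compute $\partial_t\enorm{X_u}$ from \eqref{eq:fu}, which gives $X_{tu}=-X_u-\enorm{X_u}\int_0^\L X_{\tilde s}G\,d\tilde s$ and hence $\partial_t\enorm{X_u}=\enorm{X_u}^{-1}\ip{X_u,X_{tu}}=-\enorm{X_u}\big(1+\ip{T,W}\big)$, where $T=X_s$ is the unit tangent and $W:=\int_0^\L X_{\tilde s}G\,d\tilde s$. The key identity is $W=-(X\ast K)_s$: integrating by parts in $\tilde s$ (the boundary terms cancel because $X$ is closed and $K$ is $\L$-periodic and even) identifies $\int X_{\tilde s}K(s-\tilde s)\,d\tilde s$ with $(X\ast K)_s$, and since $(X\ast K)_s=X_s\ast K=T\ast K$ we obtain $\ip{T,W}=-\ip{T,T\ast K}$. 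Substituting, the three contributions assemble into
\[
E'(t)=\int_0^\L\Big(-3\enorm{X}^2+2\ip{X,X\ast K}+\enorm{X}^2\ip{T,T\ast K}\Big)\,ds.
\]

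It then remains to bound the two interaction terms. For the cross term, Cauchy--Schwarz together with \eqref{eq:convolution} gives $\int_0^\L\ip{X,X\ast K}\,ds\le\norm{X}_{L^2(ds)}\norm{X\ast K}_{L^2(ds)}\le\norm{X}_{L^2(ds)}^2=E$. For the measure term, the crucial observation is that $T$ is a \emph{unit} vector field and $K\,ds$ is a probability measure on $\S^1$ (recall $K>0$ and $\int_0^\L K=1$ by \eqref{eq:intK}); hence $\enorm{(T\ast K)(s)}\le1$ pointwise and $\ip{T,T\ast K}\le\enorm{T\ast K}\le1$. Since $\enorm{X}^2\ge0$, this yields $\int_0^\L\enorm{X}^2\ip{T,T\ast K}\,ds\le E$. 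Adding up, $E'(t)\le-3E+2E+E=0$, which is the claimed monotonicity.

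The only genuinely delicate point, and the main obstacle, is the time-variation of the arc-length measure: one would like the manifestly non-positive quantity $2\ip{X,X_t}_{L^2(ds)}$ to suffice, but the evolving $ds$ contributes the extra term $\int_0^\L\enorm{X}^2\ip{T,T\ast K}\,ds$, whose sign is not a priori clear. Recognising, through the identity $(X\ast K)_s=T\ast K$, that this term is controlled by the very same probability-kernel estimate underlying \eqref{eq:convolution}---convolving a unit vector field against $K\,ds$ cannot increase its length---is what makes all three terms combine to exactly zero. As a consistency check, for the shrinking circle $X(u,t)=r(\cos u,\sin u)$ one computes $X\ast K=\tfrac{r^2}{r^2+1}X$, and the above formula for $E'$ returns $E'(t)=-6\pi r^3/(r^2+1)<0$.
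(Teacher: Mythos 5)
Your proof is correct and follows essentially the same route as the paper's: differentiate $\norm{X(t)}_{L^2(ds)}^2$ keeping track of the evolving measure via $\tfrac{d}{dt}ds=\ip{X_{ts},X_s}\,ds$, arrive at the three-term expression $-3E+2\ip{X,X\ast K}_{L^2(ds)}+\int\enorm{X}^2\ip{T,T\ast K}\,ds$, and close with Cauchy--Schwarz plus the convolution inequality \eqref{eq:convolution} for the cross term and the probability-kernel bound $\int_0^\L\enorm{G}\,d\tilde s=1$ for the measure term. The only differences are cosmetic (systematic use of the convolution notation and the explicit identity $\int X_{\tilde s}G\,d\tilde s=-(T\ast K)$), and the circle consistency check is a nice touch.
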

\begin{proof}
First note that since $G_s=-G_{\tilde{s}}$ we have
\[
X_{ts}=-X_s-\int_0^\L  X G_s \,d\tilde s= -X_s-\int_0^\L X_s(\tilde s) G \, ds\,.
\] 
Then (using $\frac{d}{dt} ds=\ip{X_{ts}, X_s}\, ds$) we find  
\begin{align*}
\frac{d}{dt}\norm{X(t)}^2_{L^2(ds)}& =2\int_0^\L \ip{X_t, X} \,ds +\int_0^\L \enorm{X}^2 \ip{X_{ts}, X_s }ds \\
&= \begin{multlined}[t]
-2 \int_0^\L\enorm{X}^2 \, ds- 2\int_0^\L \ip{X(s), \int_0^\L X(\tilde s) G\, d\tilde s }\,ds \\
-\int_0^\L \enorm{X}^2 \, ds -\int_0^\L \enorm{X}^2 \ip{X_s, \int_0^\L X_s(\tilde s) G \, d\tilde s }\, ds 
\,.
 \end{multlined}
\end{align*}
H\"older's inequality and the convolution inequality \eqref{eq:convolution} now yield
\begin{align*}
\frac{d}{dt}\norm{X(t)}^2_{L^2(ds)} &\leq 
 -3 \int_0^\L\enorm{X}^2 \, ds + 2\int_0^\L \enorm{X}\enorm {X\ast K} \,ds 
+ \int \enorm{X}^2 \int_0^\L |G| \, d\tilde s \, ds
\\
&\leq 0
\,.
\end{align*}
\end{proof}

Now we give a fundamental estimate for the $H^1(ds)$-gradient of length along the flow.

\begin{lemma} \label{lem:gradientinequality}
Let $X$ be an $H^1(ds)$ curve shortening flow.
There exists a constant $C>0$ depending on $X(0)$ such that
\begin{equation} \label{eq:gradientinequality}
	\norm{\grad_{H^1(ds)}\L_{X(t)}}_{H^1(ds)}\geq C \L(X(t))^{\frac{1}{2}}
\end{equation} 
for all $t\in [0,\infty)$.
\end{lemma}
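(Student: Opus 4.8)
The plan is to bound the gradient norm from below by pairing it against a single, well-chosen test vector field, using nothing more than the Cauchy--Schwarz inequality together with the defining property $d\L_{X(t)}V=\ip{\grad_{H^1(ds)}\L_{X(t)},V}_{H^1(ds)}$ of the gradient. For any nonzero $V\in T_{X(t)}\imm^1$ this immediately gives
\[
\norm{\grad_{H^1(ds)}\L_{X(t)}}_{H^1(ds)}\geq \frac{d\L_{X(t)}V}{\norm{V}_{H^1(ds)}},
\]
so the whole task reduces to exhibiting one test field $V$ for which the right-hand side is at least $C\,\L(X(t))^{1/2}$.

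The key choice I would make is to take the test field to be the position map itself, $V=X(t)$, regarded as a vector field along the curve. Because the length functional is positively $1$-homogeneous, $\L(\lambda X)=\lambda\L(X)$, Euler's identity yields $d\L_{X}(X)=\L(X)$ directly, requiring no regularity beyond $X\in\imm^1$; equivalently one reads this off the weak form $d\L_XV=\int_0^\L\ip{T,V_s}\,ds$ of \eqref{eq:dlength} together with $X_s=T$ and $|T|=1$. I would then estimate the $H^1(ds)$-norm of this test field crudely: since $X_s=T$ is a unit vector, $\norm{X_s}_{L^2(ds)}^2=\L$, while $\norm{X}_{L^2(ds)}^2=\int_0^\L|X|^2\,ds\leq \norm{X}_\infty^2\,\L$. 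Hence $\norm{X}_{H^1(ds)}^2\leq(\norm{X}_\infty^2+1)\,\L$, and substituting into the displayed inequality gives $\norm{\grad_{H^1(ds)}\L_{X(t)}}_{H^1(ds)}\geq(\norm{X(t)}_\infty^2+1)^{-1/2}\,\L(X(t))^{1/2}$.

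To convert this into a constant depending only on the initial data, I would invoke the monotonicity from Lemma \ref{uniformLinf}: for $t\geq0$ we have $\norm{X(t)}_\infty\leq\norm{X(0)}_\infty$, so the prefactor is bounded below by $C:=(\norm{X(0)}_\infty^2+1)^{-1/2}$ uniformly on $[0,\infty)$, which is exactly \eqref{eq:gradientinequality}. I do not expect a serious computational obstacle here; the entire argument hinges on the single insight of testing against $V=X$, after which everything is Cauchy--Schwarz, homogeneity, and a sup-bound. The one point genuinely requiring care is the restriction to $t\geq0$: the $L^\infty$ estimate of Lemma \ref{uniformLinf} is only non-increasing \emph{forward} in time, which is precisely why the conclusion is stated on $[0,\infty)$ and why the constant is permitted to depend on $X(0)$.
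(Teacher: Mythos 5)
Your proof is correct, and it takes a genuinely different route from the paper's. The paper derives the identity $\norm{\grad_{H^1(ds)}\L_X}_{H^1(ds)}^2=\L-\int_0^\L\ip{X,\grad_{H^1(ds)}\L_X}\,ds$ from the length evolution \eqref{eq:lengthevo} (itself obtained by integrating by parts against $X_{ss}$ and using \eqref{eq:fundamental}), and then closes the estimate with Cauchy--Schwarz, Young's inequality $2ab\le\varepsilon a^2+\varepsilon^{-1}b^2$, and Lemma \ref{uniformLinf}; you instead never touch the flow equation and simply test the gradient against the position field $V=X$, using duality, Euler's identity $d\L_X(X)=\L(X)$, and the elementary bound $\norm{X}_{H^1(ds)}^2\le(1+\norm{X}_\infty^2)\L$. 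Your argument is more elementary and is really a pointwise-in-time statement about any immersed curve with an $L^\infty$ bound (the flow enters only through Lemma \ref{uniformLinf} to make the constant uniform on $[0,\infty)$); it even yields the marginally better constant $C=(1+\norm{X(0)}_\infty^2)^{-1/2}$ versus the paper's optimized $(2+\norm{X(0)}_\infty^2)^{-1/2}$. What the paper's route buys is that the identity \eqref{eq:lengthevo} is reused repeatedly afterwards — in particular Lemma \ref{alpha} refines the same identity (via the $L^1$ bound \eqref{eq:gradboundL1}) to get $\norm{\grad_{H^1(ds)}\L_X}^2_{H^1(ds)}\ge\alpha\L$ with $\alpha$ arbitrarily close to $1$ for large $t$, a sharpening that your duality argument, capped at $C<1$, cannot deliver on its own. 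For the lemma as stated, though, your proof is complete and sound.
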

\begin{proof}
From \eqref{eq:dlength} and \eqref{eq:h1grad}, if $X$ is a solution of \eqref{eq:h1csf} then
\begin{align*}
\frac{d}{dt}\L(X)&=d\L_X(-\grad_{H^1(ds)}\L_{X})\\
& =\int_0^\L \langle X_{ss}(t,s),X(t,s)+\int_0^\L X(t,\tilde s)G(s,\tilde s)d\tilde s \rangle d s
\,.
\end{align*}
Integration by parts with \eqref{eq:fundamental} gives
\begin{align}\nonumber 
\frac{d}{dt}\L(X)
&=-\int_0^\L \langle X_s,X_s\rangle ds+\int_0^\L \langle X,X\rangle ds\\ \nonumber
&\qquad \qquad +\int_0^\L\int_0^\L \langle X(t,s),X(t,\tilde s)G(s,\tilde s)\rangle d\tilde s ds \\ 
&= -\L(X)-\int_0^\L \langle X,X_t\rangle ds
\,.
 \label{eq:lengthevo}
\end{align}
Since $\frac{d\L}{dt}=-\Vert \grad_{H^1(ds)}\L_{X}\Vert _{H^1(ds)}^2$, we have
\begin{align*}
	\norm{\grad_{H^1(ds)}\L_{X}}_{H^1(ds)}^2&=\L -\int_0^\L \ip{X, \grad_{H^1(ds)}\L_{X}} \, ds \\
&\geq \L -\norm{X}_{L^2(ds)}\norm{\grad_{H^1(ds)}\L_{X}}_{L^2(ds)}
\,.
\end{align*}
The inequality $2ab\leq \varepsilon a^2 +\frac{1}{\varepsilon}b^2$ for all $\varepsilon>0$ implies
\begin{align*}
	\L & \leq \frac{\varepsilon}{2} \norm{X}_{L^2(ds)}^2+ \left ( \frac{1}{2\varepsilon}+1\right )\norm{\grad_{H^1(ds)}\L_{X}}^2_{H^1(ds)}\\
&\leq \frac{\varepsilon}{2}  \norm{X}_{\infty}^2 \L+ \left ( \frac{1}{2\varepsilon}+1\right )\norm{\grad_{H^1(ds)}\L_{X}}^2_{H^1(ds)}
\,.
\end{align*}
Now Lemma \ref{uniformLinf} yields
\[ \L \left ( 1-\frac{\varepsilon}{2} \norm{ X(0)}_{\infty}^2 \right) \leq \left ( \frac{1}{2\varepsilon}+1\right )\norm{\grad_{H^1(ds)}\L_{X}}^2_{H^1(ds)} \]
and choosing $\varepsilon$ sufficiently small gives \eqref{eq:gradientinequality}.
\end{proof}

The gradient inequality immediately implies exponential decay of length.

\begin{lemma}
\label{LMexpdeclen}
Let $X$ be an $H^1(ds)$ curve shortening flow.
The length $\L(X(t))$ converges to zero exponentially fast as $t\to \infty$.
\end{lemma}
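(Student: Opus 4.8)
The plan is to combine the two facts already established: the gradient inequality of Lemma \ref{lem:gradientinequality} and the energy-dissipation identity for a gradient flow. Since $X$ solves \eqref{eq:h1csf}, i.e. $X_t=-\grad_{H^1(ds)}\L_X$, the length decays at the rate
\[
	\frac{d}{dt}\L(X(t))
	= d\L_{X(t)}\big(-\grad_{H^1(ds)}\L_{X(t)}\big)
	= -\norm{\grad_{H^1(ds)}\L_{X(t)}}_{H^1(ds)}^2
	\,,
\]
which is exactly the identity invoked in the proof of Lemma \ref{lem:gradientinequality}.

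Next I would feed the lower bound \eqref{eq:gradientinequality} into this identity. Squaring the gradient inequality gives $\norm{\grad_{H^1(ds)}\L_{X(t)}}_{H^1(ds)}^2\geq C^2\,\L(X(t))$, so that
\[
	\frac{d}{dt}\L(X(t)) \leq -C^2\,\L(X(t))
	\,,
\]
where $C>0$ is the constant from Lemma \ref{lem:gradientinequality} depending only on $X(0)$. This is a linear differential inequality for the non-negative quantity $\L(X(t))$.

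Finally I would integrate (equivalently, apply Gronwall's inequality): multiplying by the integrating factor $e^{C^2 t}$ shows that $\frac{d}{dt}\big(e^{C^2 t}\L(X(t))\big)\leq 0$, and integrating from $0$ to $t$ yields
\[
	\L(X(t)) \leq \L(X(0))\, e^{-C^2 t}
	\qquad\text{for all } t\in[0,\infty)
	\,,
\]
which is the claimed exponential decay to zero. There is no real obstacle here: the entire content of the lemma is already packaged in Lemma \ref{lem:gradientinequality}, and the only remaining step is the routine integration of the resulting differential inequality. The one point worth stating carefully is that $C$ depends on $X(0)$ (through the $L^\infty$ bound used in Lemma \ref{lem:gradientinequality}), so the decay rate is not universal but depends on the initial data.
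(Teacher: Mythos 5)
Your proposal is correct and follows exactly the paper's argument: the authors likewise combine the identity $\frac{d}{dt}\L=-\norm{\grad_{H^1(ds)}\L_X}^2_{H^1(ds)}$ with the squared form of the gradient inequality \eqref{eq:gradientinequality} and integrate. The only cosmetic difference is that the paper absorbs $C^2$ into a relabelled constant $C$, whereas you keep it explicit.
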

\begin{proof}
Using the gradient inequality \eqref{eq:gradientinequality} we have
\begin{align*}
	-\frac{d \L}{dt}=\norm{\grad_{H^1(ds)}\L_{X}}^2_{H^1(ds)} \geq C\L
\,.
\end{align*}
Integrating gives
\begin{equation}\label{eq:Lexp}
	\L(t)\leq \L(0) e^{-Ct}
\end{equation}
as required.
\end{proof}

Another consequence of the gradient inequality is boundedness of the $H^1(ds)$ length of the positive trajectory $X$.

\begin{lemma} \label{h1dslength}
Let $X$ be an $H^1(ds)$ curve shortening flow.
The $H^1(ds)$-length of $\{X(\cdot,t)\,:\,t\in(0,\infty)\}\subset H^1(\S^1,\R^2)$ is finite. 
\end{lemma}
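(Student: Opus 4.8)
The plan is to recognise that the $H^1(ds)$-length of the trajectory is, by definition (cf. \eqref{EQdefnl2ds} with $H^1(ds)$ in place of $L^2(ds)$), the quantity $\int_0^\infty \norm{X_t(\cdot,t)}_{H^1(ds)}\,dt$, where $X_t$ is measured in the metric at the moving base point $X(\cdot,t)$. Since $X$ is the genuine $H^1(ds)$ gradient flow (we are assuming $X(t)$ immersed, so that $\L(X(t))>0$ by Lemma \ref{immersion} and the metric is well defined), we have $X_t=-\grad_{H^1(ds)}\L_X$, so this length equals $\int_0^\infty g(t)\,dt$ where I write $g(t):=\norm{\grad_{H^1(ds)}\L_{X(t)}}_{H^1(ds)}$. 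The goal is therefore to show $\int_0^\infty g\,dt<\infty$. The two available ingredients are the energy identity $\frac{d\L}{dt}=-g^2$ already used above, and the gradient inequality $g\ge C\L(X(t))^{1/2}$ from Lemma \ref{lem:gradientinequality}.

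This is the classical gradient-flow finite-length (Lojasiewicz-type) argument, and the key observation is that the relevant exponent here is exactly $1/2$. Concretely, I would differentiate $\L^{1/2}$ along the flow:
\[
\frac{d}{dt}\L^{1/2}=\frac{1}{2}\L^{-1/2}\frac{d\L}{dt}=-\frac{g^2}{2\L^{1/2}}\,.
\]
Using the gradient inequality in the rearranged form $\L^{1/2}\le g/C$, I bound $\dfrac{g^2}{\L^{1/2}}\ge Cg$, and hence
\[
\frac{d}{dt}\L^{1/2}\le-\frac{C}{2}\,g\,,\qquad\text{equivalently}\qquad g\le-\frac{2}{C}\frac{d}{dt}\L^{1/2}\,.
\]

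Integrating this over $[0,T]$ and using that $\L^{1/2}$ is non-increasing and bounded below by $0$ (indeed $\L(t)\to0$ by Lemma \ref{LMexpdeclen}) yields
\[
\int_0^T g\,dt\le-\frac{2}{C}\big(\L(T)^{1/2}-\L(0)^{1/2}\big)\le\frac{2}{C}\L(0)^{1/2}\,,
\]
and letting $T\to\infty$ gives $\int_0^\infty g\,dt\le\frac{2}{C}\L(0)^{1/2}<\infty$, which is the claimed finiteness of the $H^1(ds)$-length of the positive trajectory.

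The only genuinely delicate point is the matching of exponents: the computation closes precisely because Lemma \ref{lem:gradientinequality} delivers the sharp power $\L^{1/2}$ rather than $\L^{\alpha}$ with $\alpha>1/2$, which would fail to integrate after dividing by $\L^{1/2}$. Everything else is routine; I would just take care to justify differentiating $\L^{1/2}$ (valid since $\L(X(t))>0$ along an immersed flow) and to record that the identification of the path length with $\int_0^\infty g\,dt$ relies on $X$ being the actual $H^1(ds)$ gradient flow, consistent with the standing immersedness assumption of this subsection.
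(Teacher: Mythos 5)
Your proposal is correct and follows essentially the same route as the paper: both arguments combine the energy identity $\frac{d\L}{dt}=-\norm{\grad_{H^1(ds)}\L_X}_{H^1(ds)}^2$ with the gradient inequality of Lemma \ref{lem:gradientinequality} to obtain $\frac{d}{dt}\big(2\L^{1/2}\big)\le -C\norm{X_t}_{H^1(ds)}$, and then integrate to get the bound $2\L(0)^{1/2}/C$ on the trajectory length. The only cosmetic difference is that you differentiate $\L^{1/2}$ first and then substitute the gradient inequality, whereas the paper inserts the inequality into $-g^2\le -C\L^{1/2}g$ before recognising the total derivative; the content is identical.
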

\begin{proof}
From the gradient inequality \eqref{eq:gradientinequality}
\begin{align*}
\frac{d}{dt}\L&=-\norm{ \grad_{H^1(ds)}\L_{X}} _{H^1(ds)}^2
\leq -\norm{\grad_{H^1(ds)}\L_{X}}_{H^1(ds)}	\norm{X_t}_{H^1(ds)}\\
&\leq -C\L^{\frac{1}{2}} \norm{X_t}_{H^1(ds)}
\end{align*}
i.e. 
\[  \frac{d}{dt}(2\L^{\frac{1}{2}})\leq - C\norm{X_t}_{H^1(ds)}
\]
and therefore 
\begin{align}\nonumber
	2\L(t)^{\frac{1}{2}}-2\L(0)^{\frac{1}{2}} &\leq -C \int_0^t \norm{X_t}_{H^1(ds)} \, dt\,,\text{ or}\\ \label{eq:gradinfinity}
\int_0^t \norm{X_t}_{H^1(ds)} \, dt &\leq \frac{2\L(0)^{\frac{1}{2} }}{C} 
\end{align}
Taking the limit $t\to \infty$ in the above inequality, the left hand side  is the length of the trajectory $X:[0,\infty)\to H^1(\S^1,\R^2)$ measured in the $H^1(ds)$ metric. 
\end{proof}

Now we conclude convergence to a point.

\begin{theorem}\label{convergence}
Let $X$ be an $H^1(ds)$ curve shortening flow.
Then $X$ converges as $t\rightarrow\infty$ in $H^1$ to a constant map $X_\infty\in H^1(\S^1,\R^2)$.
\end{theorem}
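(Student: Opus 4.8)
The plan is to show that the trajectory $t\mapsto X(t)$ is Cauchy in the flat $H^1$-norm as $t\to\infty$, and then to identify the limit as a constant map via continuity of the length functional. Since $X\in C^1(\R;H^1(\S^1,\R^2))$ we have $X(t_2)-X(t_1)=\int_{t_1}^{t_2}X_t\,dt$ as a Bochner integral, so that $\norm{X(t_2)-X(t_1)}_{\h1}\le\int_{t_1}^{t_2}\norm{X_t}_{\h1}\,dt$; it therefore suffices to prove $\int_0^\infty\norm{X_t}_{\h1}\,dt<\infty$.

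I would split $\norm{X_t}_{\h1}^2=\norm{X_t}_{\l2}^2+\norm{(X_t)_u}_{\l2}^2$ and bound each piece so that it decays exponentially in $t$. For the zeroth-order term, the estimate \eqref{eq:Fbound0} gives $\norm{X_t}_{\l2}=\norm{F(X)}_{\l2}\le\L(X(t))$, and by Lemma \ref{LMexpdeclen} (see \eqref{eq:Lexp}) the length decays exponentially, so $\int_0^\infty\norm{X_t}_{\l2}\,dt<\infty$ immediately. For the first-order term, formula \eqref{eq:fu} together with the pointwise bound \eqref{eq:TGbound} yields $\norm{(X_t)_u}_{\l2}\le\big(1+\tfrac{\L^2}{2}\big)\norm{X_u}_{\l2}$, so everything reduces to showing that $\norm{X_u(t)}_{\l2}\to0$ exponentially.

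This decay of $\norm{X_u(t)}_{\l2}$ is the key step, and I expect it to be the main obstacle, since it is precisely what upgrades the easy $L^2$ convergence to genuine $H^1$ convergence. Here I would integrate the pointwise differential inequality \eqref{eq:xuineq} over $u$, obtaining $\tfrac{d}{dt}\norm{X_u}_{\l2}^2\le(-2+\L(t)^2)\norm{X_u}_{\l2}^2$. Because $\L(t)\to0$ there is a $T_0$ with $\L(t)^2\le1$ for all $t\ge T_0$, whence $\tfrac{d}{dt}\norm{X_u}_{\l2}^2\le-\norm{X_u}_{\l2}^2$ and integrating gives $\norm{X_u(t)}_{\l2}^2\le\norm{X_u(T_0)}_{\l2}^2\,e^{-(t-T_0)}$ for $t\ge T_0$. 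Exponential decay of $\norm{X_u}_{\l2}$ makes $\norm{(X_t)_u}_{\l2}$ integrable on $[T_0,\infty)$, while the finite interval $[0,T_0]$ contributes only a finite amount; hence $\int_0^\infty\norm{X_t}_{\h1}\,dt<\infty$ and $X(t)$ converges in $H^1$ to some $X_\infty\in H^1(\S^1,\R^2)$.

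It remains to check $X_\infty\in\SC$. Since $\L$ is Lipschitz on $H^1(\S^1,\R^2)$ (indeed $\enorm{\L(x)-\L(y)}\le\norm{x-y}_{\h1}$, as already used in Lemma \ref{blip}), the convergence $X(t)\to X_\infty$ in $H^1$ gives $\L(X_\infty)=\lim_{t\to\infty}\L(X(t))=0$ by Lemma \ref{LMexpdeclen}. Therefore $(X_\infty)'=0$ almost everywhere, i.e. $X_\infty$ is a constant map, which completes the proof. Equivalently, one could observe directly that $X_u(t)\to0$ in $L^2$ together with $X(t)\to X_\infty$ in $L^2$ forces the weak derivative of $X_\infty$ to vanish by closedness of $\partial_u$.
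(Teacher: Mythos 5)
Your argument is correct and follows essentially the same route as the paper's proof: both establish $\int_0^\infty\norm{X_t}_{\h1}\,dt<\infty$ by combining the bound $\norm{X_t}_{\l2}\le\L$ with the exponential length decay of Lemma \ref{LMexpdeclen}, and the bound $\enorm{X_{tu}}\le(1+\L^2/2)\enorm{X_u}$ with the exponential decay of $X_u$ extracted from \eqref{eq:xuineq}, then identify the limit as constant because $\L(X(t))\to0$. The only deviations are cosmetic: you integrate the decay of $\L$ directly rather than via $\int_0^\infty\norm{X_t}_{H^1(ds)}^2\,dt\le\L(0)$, and you run the $X_u$ decay at the level of the $L^2$-norm (waiting until $\L^2\le1$) instead of pointwise with the integrating factor $e^{p(t)}$.
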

\begin{proof}
Recalling \eqref{eq:gradbound0} we have
$\enorm{X_t} \leq \L $
and therefore 
\begin{align} \nonumber 
\norm{X_t}_{H^1}^2&=\int \enorm{X_t}^2 du+\int \enorm{X_{tu}}^2 du \\ \label{eq:h1est}
&\leq \L^2 +\int \enorm{X_{tu}}^2 du
\,.
\end{align}
Using $G_s=-G_{\tilde{s}}$ we have
\[ X_{ts}=-X_s-\int_0^\L  X G_s \,d\tilde s= -X_s-\int_0^\L X_s(\tilde s) G \, ds\]
and then from \eqref{eq:TGbound}
\begin{align}\label{eq:normgradLs}
	\enorm{X_{ts}}\leq 1+
			\enorm{ \int_0^\L X_s(\tilde s) G \, ds }
	\leq 1+\frac{\L(X)^2}{2}
\,.
\end{align} 
Hence $\enorm{X_{tu}}\leq \enorm{X_u}(1+\L^2/2)$. Recalling \eqref{eq:xuineq} and then \eqref{eq:Lexp} 
\begin{align}\label{eq:ddtxu}
(-2-\L(0)^2e^{-2Ct})\enorm{X_u}^2&\leq \frac{d}{dt}\enorm{X_u}^2\leq (-2+\L(0)^2e^{-2Ct})\enorm{X_u}
\end{align}
Using the second inequality, multiply by the integrating factor $e^{p(t)}$ where \[p(t):=\int_0^t 2-\L^2(0)e^{-2C\tau} d\tau,\]
and integrate with respect to $t$ to find
\begin{align*}
\enorm{X_u(t)}^2\leq \enorm{X_u(0)}^2e^{-p(t)}\leq \enorm{X_u(0)}^2e^{-2t+c_3}
\end{align*}
for some constant $c_3$. For future reference we note that the same procedure can be applied to the lower bound in \eqref{eq:ddtxu} and then
\begin{equation}\label{eq:Xuest}
\enorm{X_u(0)}^2e^{-2t-c_3}\leq \enorm{X_u(t)}^2 \leq \enorm{X_u(0)}^2e^{-2t+c_3}\,.
\end{equation}
We therefore have
\[ 
\enorm{X_{tu}}\leq \enorm{X_u(0)}e^{-t+c}(1+\L^2/2)
\]
and then referring back to \eqref{eq:h1est}:
\begin{align*}
	\norm{X_t}_{H^1} \leq \L +\norm{X_u(0)}_{L^2}e^{-t+c}(1+\L^2/2)
\,.
\end{align*}
Using the gradient inequality \eqref{eq:gradientinequality} and monotonicity of $\L$ we obtain
\begin{align*}
\norm{X_t}_{H^1} \leq c_1 \norm{X_t}_{H^1(ds)}^2+c_2e^{-t+c}
\,.
\end{align*}
By integrating $\frac{d\L}{dt}=-\norm{X_t}_{H^1(ds)}^2$ with respect to $t$ we have
\[
\int_0^\infty \norm{X_t}_{H^1(ds)}^2 dt \leq \L(0)
\,.
\]
Hence for all $\varepsilon>0$ there exists $t_\varepsilon$ such that $\int_t^\infty \norm{X_t}_{H^1} dt <\varepsilon $ for all $t\geq t_\varepsilon$,
and since 
\[
 \bigg\lVert
	 X(t_2)-X(t_1)
 \bigg\rVert_{H^1}
=
 \bigg\lVert
 \int_{t_1}^{t_2} X_t \, dt
 \bigg\rVert_{H^1}
 \leq
 \int_{t_1}^{t_2}\norm{X_t}_{H^1} dt
\]
it follows that $X_t$ converges in $H^1$ to some $X_\infty$. By \eqref{eq:Lexp} the length of $X_\infty$ is zero, i.e. it is a constant map. 
\end{proof}

\begin{remark}
	If $(\imm^1,H^1(ds))$ were a complete metric space then Lemma
\ref{h1dslength} would be enough to conclude convergence of the flow. However it is shown in \cite{Michor:2007aa} section 6.1 that the $H^1(ds)$ geodesic of concentric circles can shrink to a point in finite time, so the space is not even geodesically complete.
Indeed, Theorem \ref{convergence} demonstrates convergence of the flow with finite path length to a point
outside $\IM$, proving again that $(\imm^1, H^1(ds))$ is not metrically complete. 
\end{remark}

\begin{remark}\label{extension}
For extending the convergence result above to the case of initial data $X(0)\in H^1(\S^1,\R^2)\setminus \mathcal{C}$ the main difficulty is that the function $F(X(u,t))$ in \eqref{eq:ode} is no longer the $H^1(ds)$ gradient (cf. Remark \ref{flowspace}) and so we need some other way of arriving at, for example, equation \eqref{eq:lengthevo}. To do this we can approximate by $C^2$ immersions, as it follows from eg. Theorem 2.12 in \cite{Hirsch:1994aa} that these are dense in $H^1(\S^1,\R^2)$. Given $X(t_0)\in H^1(\S^1,\R^2)\setminus \mathcal{C}$ we let $X_\varepsilon(t_0)$ be an immersion such that  $\norm{X(t)-X_\varepsilon(t)}_{H^1}\leq \varepsilon$ for $t$ in a neighbourhood of $t_0$. Then following \eqref{eq:lengthevo} we have 
\[ \frac{d\L (X)}{dt}:= \lim_{\varepsilon\to 0} \left (-\L(X_\varepsilon)-\int_0^\L \langle X_\varepsilon,(X_\varepsilon)_t\rangle ds \right ) \, . \]
The limit exists because all the terms are bounded by $\norm{X_\varepsilon}_{H^1}$ (for $(X_\varepsilon)_t$ this follows from \eqref{fboundh1}). Similarly $\frac{d\L(X)}{dt}=-\norm{F(X)}^2_{H^1}$ and we proceed with the rest of the proofs by writing $F(X)$ or $X_t$ in place of $-\grad_{H^1(ds)}\L_X$.

\end{remark}



\section{Shape evolution and asymptotics}

\subsection{Generic qualitative behaviour of the flow}

Computational experiments indicate that the flow tends to reshape the initial
data, gradually rounding out corners and improving regularity.
However the scale dependence of the flow introduces an interesting effect: 
when the length becomes small, the `reshaping power' seems to run out and curves shrink approximately self-similarly, preserving regions of low regularity.
This means that corners of small polygons persist whereas corners of large polygons round off under the flow (cf. Figure \ref{fig:squares}).

Heuristically, this is because of the behaviour of $G$ as $\L\to 0$. If we
Taylor expand 
\[ G(s,\tilde s)\approx -\frac{1}{2\sinh(\L/2)} \left (1+\frac{1}{2!}\left (\enorm{s-\tilde s}-\L/2 \right )^2+\ldots \right ) \]
since $\enorm{s-\tilde s}\leq \L$, the constant term dominates when $\L$ is small. Then
\[ X_t\approx -X+\int_0^\L \frac{X}{2\sinh(\L/2)}ds = -X+\frac{\L}{2\sinh(\L/2)}\bar X\]
and $\lim_{\L\to 0}\frac{\L}{2\sinh(\L/2)}=1$ so each point on the curve moves toward its centre.


\subsection{Remarks on the numerical simulations}\label{ssnumerics} The numerical simulations were carried out in Julia using a basic forward Euler method. Curves are approximated by polygons. For initial data we
take an ordered list of vertices $X_i$ in $\R^2$ of a polygon and the length of
$X$ is of course just the perimeter of the polygon. The arc length $s_i$ at
$X_i$ is the sum of distances between vertices up to $X_i$ and for the
arc-length element $ds_i$ we use the average of the distance to the previous
vertex and the distance to the next vertex. The Green's function is then
calculated at each pair of vertices: 
\[ G_{ij}(X)=-\frac{\cosh(|s_i-s_j|-L(X)/2)}{2\sinh(L(X)/2)} \]
and the flow velocity $V_i$ at $X_i$ is
\[ V_i=-X_i-\sum_j X_jG_{ij}(X) ds_j \]
The new position $\tilde X_i$ of the vertex $X_i$ is calculated by forward-Euler with timestep $h$:
$ \tilde X_i=X_i+h V_i$. No efforts were made to quantify errors or test accuracy, but the results appear reasonable and stable provided time steps are not too large and there are sufficiently many vertices. A Jupyter notebook containing the code is available online \cite{code}.

\subsection{Evolution and convergence of an exponential rescaling}

\begin{definition}
Let $X:[0,\infty)\to H^1(\S^1,\R^2)\setminus \SC $ be a solution to the $H^1(ds)$ curve shortening flow \eqref{eq:h1csf}. We define the \emph{asymptotic profile} $Y$ of $X$ as 
\[
 Y(t,u):=e^{t}\left(X(t,u)-X(t,0)\right )\,.
\]
\end{definition}

We anchor the asymptotic profile so that $Y(t,0) = 0$ for all $t$.
This is not only for convenience; if the final point that the flow converges to
is not the origin, then an unanchored profile $\tilde Y = e^t X$ would simply
disappear at infinity and not converge to anything.

The aim in this section is to prove that the asymptotic profile converges.
Simulations indicate that there are a variety of possible
shapes for the limit (once we know it exists); numerically, even a simple
rescaling of the given initial data may alter the asymptotic profile. As in the previous section we present the results under the assumption that $X$ is a flow of immersed curves, but they can be extended to $H^1(\S^1,\R^2)\setminus \mathcal{C}$ by the method described in Remark \ref{extension}. 

We will need the following refinement of the gradient inequality. 

\begin{lemma}\label{alpha}
Let $X$ be an $H^1(ds)$ curve shortening flow.
For any $\alpha \in (0,1)$ there exists $t_\alpha$ such that 
\[
\norm{\grad_{H^1(ds)}\L_{X}}^2_{H^1(ds)}\geq \alpha \L(X) 
\]
for all $t\geq t_\alpha $.
\end{lemma}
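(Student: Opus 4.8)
\emph{Plan.} The starting point is the exact identity for the gradient obtained inside the proof of Lemma \ref{lem:gradientinequality}, namely
\[
\norm{\grad_{H^1(ds)}\L_{X}}_{H^1(ds)}^2 = \L(X) - \int_0^\L \ip{X,\grad_{H^1(ds)}\L_{X}}\,ds.
\]
There the inner-product term was controlled crudely by $\norm{X}_\infty^2\L$, and since $\norm{X}_\infty$ is merely bounded (Lemma \ref{uniformLinf}) this only produced a fixed constant $C$. To sharpen the constant to an arbitrary $\alpha<1$, the plan is to exploit two facts: the flow contracts to a point so that $\L(X(t))\to0$ (Lemma \ref{LMexpdeclen}), and the error integral is invariant under translating $X$ by a constant vector, which lets me replace $\norm{X}_\infty$ by the \emph{oscillation} of $X$, a quantity that is itself $O(\L)$.

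\emph{Translation invariance of the error term.} Writing $\grad_{H^1(ds)}\L_X=-X_t$, I would first record that $\int_0^\L \grad_{H^1(ds)}\L_X\,ds=0$: from $\grad_{H^1(ds)}\L_X=X+\int_0^\L X(\tilde s)G(\cdot,\tilde s)\,d\tilde s$, the identity $\int_0^\L G(s,\tilde s)\,ds=-1$ (proof of Lemma \ref{LMh1f}) and Fubini give
\[
\int_0^\L \grad_{H^1(ds)}\L_X\,ds = \int_0^\L X\,ds + \int_0^\L X(\tilde s)\Big(\int_0^\L G(s,\tilde s)\,ds\Big)d\tilde s = 0.
\]
Consequently, for any constant $c\in\R^2$ one has $\int_0^\L \ip{X-c,\grad_{H^1(ds)}\L_X}\,ds = \int_0^\L \ip{X,\grad_{H^1(ds)}\L_X}\,ds$. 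I would then take $c=X(t,u_0)$ for a fixed parameter $u_0$, so that $\enorm{X(u)-c}\le\int_{\S^1}\enorm{X_u}\,du=\L$ and hence $\norm{X-c}_{L^2(ds)}\le\L^{1/2}\norm{X-c}_\infty\le\L^{3/2}$.

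\emph{Closing the estimate.} Setting $g:=\norm{\grad_{H^1(ds)}\L_X}_{H^1(ds)}$, the identity together with Cauchy--Schwarz and $\norm{\grad_{H^1(ds)}\L_X}_{L^2(ds)}\le g$ gives
\[
\enorm{g^2-\L}=\Big|\int_0^\L \ip{X-c,\grad_{H^1(ds)}\L_X}\,ds\Big|\le \norm{X-c}_{L^2(ds)}\,g\le \L^{3/2}g.
\]
The upper half, $g^2\le \L+\L^{3/2}g$, yields $g\le C\L^{1/2}$ once $\L\le1$; substituting this into the lower half $g^2\ge\L-\L^{3/2}g$ produces $g^2\ge\L(1-C\L)$. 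Since $\L(X(t))\to0$ as $t\to\infty$ by Lemma \ref{LMexpdeclen}, for any $\alpha\in(0,1)$ I may choose $t_\alpha$ with $C\L(X(t))\le1-\alpha$ for $t\ge t_\alpha$, whence $g^2\ge\alpha\L$, which is the claim.

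\emph{Main obstacle.} The one genuinely new ingredient is the translation-invariance observation $\int_0^\L X_t\,ds=0$; without it the error term can only be bounded by the non-vanishing quantity $\norm{X}_\infty\L^{1/2}$ and the constant cannot be pushed to $1$. Everything afterwards is an elementary quadratic manipulation, the only care being to extract the upper bound $g\lesssim\L^{1/2}$ first so as to close the lower bound.
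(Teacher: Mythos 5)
Your proof is correct, and its skeleton coincides with the paper's: both start from the identity $\norm{\grad_{H^1(ds)}\L_{X}}_{H^1(ds)}^2 = \L - \int_0^\L\ip{X,\grad_{H^1(ds)}\L_{X}}\,ds$ coming from \eqref{eq:lengthevo}, bound the cross term by $O(\L^2)$, and let the decay of $\L$ absorb it past a time $t_\alpha$. The difference lies in how that $O(\L^2)$ bound is produced. You use the mean-zero property $\int_0^\L\grad_{H^1(ds)}\L_{X}\,ds=0$ (which is correct, by Fubini and $\int_0^\L G(s,\tilde s)\,ds=-1$ from the proof of Lemma \ref{LMh1f}) to subtract a point value of $X$, bound the oscillation by $\L$, and close with Cauchy--Schwarz in $L^2(ds)$ followed by a two-step quadratic absorption. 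The paper instead pairs $\norm{X}_\infty\le\norm{X(0)}_\infty$ (Lemma \ref{uniformLinf}) against $\norm{\grad_{H^1(ds)}\L_{X}}_{L^1(ds)}\le\L^2$, the latter being immediate from the pointwise bound $\enorm{\grad\L_X}\le\L$ of \eqref{eq:gradbound0}; this yields $\norm{\grad_{H^1(ds)}\L_{X}}_{H^1(ds)}^2\ge\L-\norm{X(0)}_\infty\L^2$ in one line, with no translation trick and no need for the intermediate upper bound $g\le C\L^{1/2}$. So your closing remark overstates the role of the new ingredient: the cross term is already $O(\L^2)$ without translation invariance, provided one uses the $L^\infty$--$L^1$ pairing rather than $L^2$--$L^2$. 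What your route buys is a threshold time $t_\alpha$ depending only on when $C\L$ drops below $1-\alpha$, with a universal $C$ rather than one involving $\norm{X(0)}_\infty$; what the paper's buys is brevity. Both arguments are complete.
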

\begin{proof}
We abbreviate the gradient to $\grad \L_X$ in order to lighten the notation. Equation \eqref{eq:gradbound0} implies
\begin{equation}\label{eq:gradboundL1}
	\norm{\grad \L_X}_{L^1(ds)}\leq \L(X)^2
\end{equation}
 and therefore from \eqref{eq:lengthevo} and $\frac{d\L}{dt}=-\Vert \grad \L (X)\Vert _{H^1(ds)}^2$:
\begin{align*}
	\norm{\grad \L_X}_{H^1(ds)}^2&=\L -\int_0^\L \langle X, \grad \L_X \rangle  \, ds \\
&\geq \L-\norm{X}_\infty \norm{\grad \L_X}_{L^1(ds)} \\
&\geq \L-\norm{X(0)}_\infty \L^2
\end{align*}
where we have also used Lemma \ref{uniformLinf}. Now using \eqref{eq:Lexp}
\[
 \norm{\grad \L_X}_{H^1(ds)}^2 \geq (1- \norm{X(0)}_\infty \L(0)  e^{-Ct}) \L(t)
\,.
\]
If $\alpha\geq 1-\norm{X(0)}_\infty \L(0) $ we can find the required $t_\alpha$ by solving $\alpha=1- \norm{X(0)}_\infty \L(0)  e^{-Ct_\alpha}$, otherwise $t_\alpha=0$. 
\end{proof}

We also need an upper bound for the gradient in terms of length. 
\begin{lemma}
For $X\in H^1(\S^1,\R^2)$,
\begin{equation} \label{eq:gradh1dsbound}
	\norm{\grad \L_X}_{H^1(ds)}^2\leq \L(X)+2\L(X)^3+\frac{\L(X)^5}{4}
\,.
\end{equation} 
\end{lemma}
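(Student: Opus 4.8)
The plan is to split the squared $H^1(ds)$ norm into its defining pieces,
\[
\norm{\grad \L_X}_{H^1(ds)}^2 = \norm{\grad \L_X}_{L^2(ds)}^2 + \norm{(\grad \L_X)_s}_{L^2(ds)}^2\,,
\]
and to bound each summand by integrating a \emph{pointwise} estimate on $\grad \L_X$ and on its arc-length derivative against $ds$ over $[0,\L]$. Both pointwise estimates have already been recorded earlier in the excerpt, so the work is mostly bookkeeping.

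First I would handle the $L^2(ds)$ term. Since $\grad \L_X = -F(X)$ with $F$ as in \eqref{EQgenldefs}, the bound \eqref{eq:gradbound0} gives the pointwise inequality $\enorm{\grad \L_X}\leq \L(X)$ at every arc-length point. Integrating its square over $[0,\L]$ yields $\norm{\grad \L_X}_{L^2(ds)}^2 \leq \int_0^\L \L^2\, ds = \L^3$.

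For the derivative term I would use that differentiating the gradient formula $\grad \L_X(s) = X(s)+\int_0^\L X(\tilde s) G(s,\tilde s)\, d\tilde s$ and integrating by parts (via $G_s=-G_{\tilde s}$, with the jump in $\partial_{\tilde s}G$ handled exactly as in the derivation of \eqref{eq:fu}) produces $(\grad \L_X)_s = X_s + \int_0^\L X_s(\tilde s) G\, d\tilde s$, which is precisely $-X_{ts}$ along the flow. Since $s$ is arc length we have $\enorm{X_s}=1$, and the integral term is controlled by \eqref{eq:TGbound} (which rests only on $\int_0^\L X_s\, d\tilde s = 0$ for a closed curve), so the pointwise bound $\enorm{(\grad \L_X)_s}\leq 1+\tfrac{\L^2}{2}$ of \eqref{eq:normgradLs} applies to the static gradient of any immersed $X$, not merely along a flow. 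Hence $\norm{(\grad \L_X)_s}_{L^2(ds)}^2 \leq \int_0^\L\big(1+\tfrac{\L^2}{2}\big)^2 ds = \L\big(1+\tfrac{\L^2}{2}\big)^2 = \L+\L^3+\tfrac{\L^5}{4}$.

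Adding the two contributions gives $\L^3 + \L + \L^3 + \tfrac{\L^5}{4} = \L + 2\L^3 + \tfrac{\L^5}{4}$, which is the asserted inequality. I do not expect a genuine obstacle here: the only point requiring care is identifying $(\grad \L_X)_s$ with the explicit expression above and checking that the two earlier pointwise bounds are statements about the gradient formula in arc-length parametrisation (hence valid for an arbitrary immersed $X$), rather than about the time-evolution; once that is observed, the estimate is immediate.
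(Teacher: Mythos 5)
Your proposal is correct and follows essentially the same route as the paper: split the squared $H^1(ds)$ norm, bound the $L^2(ds)$ piece by $\L^3$ via \eqref{eq:gradbound0}, and bound the derivative piece by $\L(1+\L^2/2)^2$ via \eqref{eq:normgradLs}. Your additional remark that these pointwise bounds are really statements about the static gradient in arc-length parametrisation (not about the time evolution) is a fair observation, but the argument is the same as the paper's.
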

\begin{proof}
From \eqref{eq:gradbound0} we have
\begin{align*}
	\norm{\grad \L_X}_{L^2(ds)}^2 \leq \L^3
\,.
\end{align*}
Then \eqref{eq:normgradLs} implies
\[
\norm{(\grad \L_X)_s}^2_{L^2(ds)}\leq \int \left (1+\frac{\L^2}{2} \right )^2 \, ds \leq \L + \L^3 + \frac{\L^5}{4}
\]
and the result follows.
\end{proof}

We now prove convergence of the asymptotic profile along a subsequence of times
-- sometimes this is called \emph{subconvergence}.

\begin{theorem}
\label{TMprofile}
Let $X$ be an $H^1(ds)$ curve shortening flow and $Y$ its asymptotic profile.
There is a non-trivial $Y_\infty\in C^0(\S^1,\R^2)$ such that $Y(t)$ has a
convergent subsequence $Y(t_i)\to Y_\infty$ in $C^0$ as $i\to\infty$.
\end{theorem}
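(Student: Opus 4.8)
The plan is to obtain the limit by the Arzel\`a--Ascoli theorem and to establish non-triviality separately, since these are genuinely different difficulties here. Concretely, I would show that the family $\{Y(t)\}_{t\ge 0}$ is uniformly bounded and uniformly equicontinuous in $C^0(\S^1,\R^2)$, extract a $C^0$-convergent subsequence $Y(t_i)\to Y_\infty$, and then prove that $Y_\infty$ is not a constant map by controlling a scale-corrected functional that is continuous in the $C^0$ topology and stays bounded below along the flow.

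For the compactness, the essential input is the pointwise derivative estimate \eqref{eq:Xuest}, which gives $|X_u(t,u)|\le e^{-t+c_3/2}|X_u(0,u)|$ and hence $\|X_u(t)\|_\l2\le e^{-t+c_3/2}\|X_u(0)\|_\l2$. Writing $Y(t,u)=e^t\int_0^u X_u(t,\tilde u)\,d\tilde u$ and applying Cauchy--Schwarz (recall $|\S^1|=1$) immediately yields the uniform bound $\|Y(t)\|_\infty\le e^{c_3/2}\|X_u(0)\|_\l2$; the same computation restricted to an arc gives the uniform H\"older estimate $|Y(t,u)-Y(t,v)|\le e^{c_3/2}\|X_u(0)\|_\l2\,|u-v|^{1/2}$. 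These two bounds are exactly the hypotheses of Arzel\`a--Ascoli, producing $Y_\infty\in C^{0,1/2}(\S^1,\R^2)\subset C^0(\S^1,\R^2)$ along a subsequence. It is crucial to use the decay in \eqref{eq:Xuest} rather than the mere monotonicity of $\|X_u\|_\l2$ from Lemma \ref{uniformH1flat}, since the latter only gives $\|Y(t)\|_\infty\le e^t\|X_u(0)\|_\l2$, which is useless.

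The main obstacle is non-triviality, and the difficulty is that the natural lower bounds are invisible to the $C^0$ topology: the length $\L(Y(t))=e^t\L(X(t))$ is bounded below (by integrating \eqref{eq:Xuest}) but length is only lower semicontinuous under uniform convergence, while the lower bound $\|Y_u(t)\|_\l2\ge e^{-c_3/2}\|X_u(0)\|_\l2$ cannot survive a $C^0$ limit, since rapidly oscillating curves can carry derivative energy in $L^2$ yet converge uniformly to a point. To circumvent this I would track the rescaled pairwise-distance spread
\[
 Z(t):=\int_{\S^1}\!\int_{\S^1}|Y(t,u)-Y(t,v)|^2\,du\,dv=e^{2t}\int_{\S^1}\!\int_{\S^1}|X(t,u)-X(t,v)|^2\,du\,dv\,,
\]
which is manifestly continuous in the $C^0$ topology (the base point cancels, so no anchoring is needed) and vanishes precisely when its argument is a constant map. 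It therefore suffices to show $Z(t)\ge c>0$ for large $t$.

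To bound $Z$ from below I would use the leading-order dynamics. Writing the flow \eqref{eq:h1csf} as $X_t(u)=-X(u)+(X\ast K)(s(u))$ with $K$ as in Section \ref{ssconvergence} (so that $G=-K$), one computes $\frac{d}{dt}Z=2e^{2t}\int\!\int\langle X(u)-X(v),\,(X\ast K)(s(u))-(X\ast K)(s(v))\rangle\,du\,dv$, in which the $-X$ term is exactly cancelled by the $e^{2t}$ factor. Since $\int_0^\L\big(K(s(u)-\tilde s)-K(s(v)-\tilde s)\big)\,d\tilde s=0$ and $|K'|\le\tfrac12$ (cf. \eqref{eq:Gest2}), one gets $|(X\ast K)(s(u))-(X\ast K)(s(v))|\le\tfrac12\L^2\sup|X-\bar X|\lesssim \L^2 e^{-t}$, where $\bar X$ is the $ds$-average of $X$; combining this with $\big(\int\!\int|X(u)-X(v)|^2\big)^{1/2}=e^{-t}Z^{1/2}$ gives $\big|\frac{d}{dt}Z^{1/2}\big|\le \mathrm{const}\cdot\L(X(t))^2\le \mathrm{const}\cdot\L(0)^2 e^{-2Ct}$ by \eqref{eq:Lexp}. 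This is integrable in $t$, so $Z^{1/2}$ has a finite limit and, choosing $t_0$ large enough that the tail integral is below $\tfrac12 Z^{1/2}(t_0)$, we obtain $Z(t)\ge\tfrac14 Z(t_0)>0$ for $t\ge t_0$ (with $Z(t_0)>0$ since $X(t_0)$ is non-constant). Passing to the convergent subsequence then yields $\int\!\int|Y_\infty(u)-Y_\infty(v)|^2\ge\tfrac14 Z(t_0)>0$, so $Y_\infty$ is non-constant; as $Y(t,0)=0$ forces $Y_\infty(0)=0$, this is exactly the assertion that $Y_\infty$ is non-trivial.
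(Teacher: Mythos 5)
Your compactness step is correct and is in substance the paper's own argument: both hinge on the pointwise derivative bound \eqref{eq:Xuest} (equivalently \eqref{EQimmersionprofile} after multiplying by $e^{2t}$), followed by Arzel\`a--Ascoli. Your route is slightly leaner --- you get the uniform bound and the $1/2$-H\"older modulus for $Y$ directly from $\norm{X_u(t)}_{L^2}\le e^{-t+c_3/2}\norm{X_u(0)}_{L^2}$ and Cauchy--Schwarz, without first establishing the length bounds \eqref{eq:Ylength} --- and your remark that the mere monotonicity of $\norm{X_u}_{L^2}$ from Lemma \ref{uniformH1flat} would not suffice is accurate. You have also correctly identified the delicate point that the paper's proof passes over in silence: neither the lower bound $\L(Y(t))\ge c_0$ nor the pointwise lower bound on $|Y_u|$ survives a $C^0$ limit, so non-constancy of $Y_\infty$ genuinely requires a separate argument.

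However, your repair of that point has a gap at its final step. From $X_t=-X+X\ast K$ you correctly derive $\bigl|\tfrac{d}{dt}Z^{1/2}\bigr|\le g(t)$ with $g(t)=c\,\L(X(t))^2$ integrable, but this is an \emph{additive} differential inequality, and it is entirely consistent with $Z^{1/2}(t)\to 0$: the model function $Z^{1/2}(t)=e^{-2Ct}$ satisfies $\bigl|\tfrac{d}{dt}Z^{1/2}\bigr|\le 2Ce^{-2Ct}$, which is integrable. Hence ``choose $t_0$ so that the tail integral is below $\tfrac12 Z^{1/2}(t_0)$'' is not justified --- such a $t_0$ exists only if $Z^{1/2}(t_0)$ does not decay at least as fast as the tail of $g$, which is essentially the conclusion you are trying to reach. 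The additive form is forced on you by the choice of functional: to close the estimate you must bound $\sup_u|X-\bar X|$ by the uniform bound $ce^{-t}$ on $Y$ rather than by $e^{-t}Z^{1/2}$, since the sup-spread of a curve is not controlled by its mean-square spread. The argument is rescued by switching to a sup-type quantity: set $D_Y(t):=e^t\sup_{u,v}|X(t,u)-X(t,v)|$; at a maximising pair the $-X$ term again cancels against the factor $e^t$, and the estimate $|(X\ast K)(s(u))-(X\ast K)(s(v))|\le\tfrac12\L^2\sup|X-\bar X|\le\tfrac12\L^2e^{-t}D_Y$ now yields the \emph{multiplicative} inequality $\tfrac{d}{dt}D_Y\ge-\tfrac12\L^2D_Y$, whence Gronwall together with $\int_0^\infty\L^2\,dt<\infty$ (by \eqref{eq:Lexp}) gives $D_Y(t)\ge D_Y(0)e^{-c}>0$. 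Since the diameter is continuous under uniform convergence, this forces $\operatorname{diam}(Y_\infty)>0$ and hence non-constancy; it is the same mechanism as the chord--arc computation later in the paper.
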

\begin{proof}
We will show that $Y(t)$ is eventually uniformly bounded in $H^1$. 
	First we claim that there exist constants $c_0,c_1>0$ and $t_0<\infty$ such that 
\begin{equation}\label{eq:Ylength}
 c_0<\L(Y(t))<c_1	 \quad \text{for all} \quad t>t_0.
\end{equation}
For the upper bound, from $\L(Y)=e^t\L(X)$, \eqref{eq:lengthevo} and \eqref{eq:gradboundL1}
\begin{align}\nonumber
	\frac{d}{dt}\L(Y)&=e^t\L(X)+e^t\frac{d}{dt}\L(X) =e^t\int_0^{\L(X)}\langle X,\grad \L_X\rangle \, ds\\
&\leq \norm{X(t)}_\infty e^t \L(X)^2 
 \leq \norm{X(0)}_\infty e^t \L(X)^2 \label{eq:lengthY}
\end{align}
From Lemma \ref{alpha}, for any $\alpha\in (0,1)$ there exists $t_\alpha$ such that
\[ \frac{d}{dt}\L(X)\leq -\alpha \L(X), \quad t\geq t_\alpha \]
hence 
$
\L(X(t))\leq \L(X(t_\alpha))e^{-\alpha t}$ for $ t>t_\alpha.	$
 Using this in \eqref{eq:lengthY} with eg. $\alpha=\frac{3}{4}$, 
\[ 
\frac{d}{dt}\L(Y)\leq ce^{-\frac{t}{2}}, \quad t\geq t_{3/4}
\,.
\]
where $c$ is a constant depending on $X(0)$ and $\L(X(t_{3/4}))$. Integrating from $t_{3/4}$ to $t$ gives 
\begin{equation}\label{LYupper}
	\L(Y(t))\leq \L(Y(t_{3/4}))+2ce^{-t_{3/4}/2}-2ce^{-t/2}
\end{equation}
which gives an upper bound for $\L(Y(t))$ for $t\geq t_{3/4}$.
For the lower bound the estimate \eqref{eq:gradh1dsbound} gives 
\[
 -\frac{d}{dt}\L(X) \leq \L(X)+2\L(X)^3+\frac{\L(X)^5}{4} 
\,.
\]
Let $t_\beta$ be such that $\L(X(t))<1$ for all $t>t_\beta$.
(From \eqref{eq:Lexp} we can find $t_\beta$ by solving $1 = \L(0)e^{-Ct_\beta}$.)

Then also using the gradient inequality \eqref{eq:gradientinequality} there is a constant $c$ such that
\[
 \frac{d}{dt}\L(X)\geq  -\L(X)-c\L(X)\norm{\grad \L_X}^2_{H^1(ds)} \qquad t>t_\beta
\,.
\]
Recalling \eqref{eq:gradh1dsbound},
this implies ($t>\max\{t_\beta,t_{3/4}\}$)
\[
 \frac{d}{dt}(e^t\L(X)) \geq  -ce^t\L(X)\norm{\grad \L_X}^2_{H^1(ds)}
 \ge -\hat ce^t\L^2(X) \ge \tilde ce^{-\frac12t}
\,.
\]
Integrating with respect to $t$,  there is a constant 
$c_0$ such that 
\[
 \L(X)\geq c_0e^{-t},  \qquad t>\max\{t_\beta,t_{3/4}\}
\]
and therefore $\L(Y)\geq c_0$ for all $t>t_\beta$. Choosing $t_0$ to be the greater of $t_{3/4}, t_\beta$, we have established the claim \eqref{eq:Ylength}. We claim also that 
\begin{equation}\label{eq:Ybound}
	\norm{Y}_{L^2}\leq c_1, \qquad t>t_0.
\end{equation}
To see this, note that by the Fundamental Theorem of Calculus followed by the H\"older inequality applied to each component of $Y$:
\[ 
|Y|^2 \le \L\int_0^{\L(Y)} |Y_s|^2 ds_Y = \L^2(Y)
 \]
and so \eqref{eq:Ylength} gives $\norm{Y}_{L^2}\leq c_1 $.

Multiplying \eqref{eq:Xuest} by $e^{2t}$ gives 
\begin{equation}
\enorm{X_u(0,u)}^2e^{-c_3}\leq  \enorm{Y_u(t,u)}^2\leq \enorm{X_u(0,u)}^2e^{c_3}
\,.
\label{EQimmersionprofile}
\end{equation}
We therefore have a uniform bound on $\norm{Y_u}_{L^p}$ for $1\leq p \leq
\infty$ in terms of $\norm{X_u(0)}_{L^p}$. In particular, if $X(0)\in
H^1$ we have a uniform $H^1$ bound for $Y$ and then by
the Arzela-Ascoli theorem there is a sequence $(t_i)$ and a $Y_\infty\in W^{1,\infty}$
such that $Y(t_i)\to Y_\infty$ in $C^0$ (cf. \cite{Leoni:2017aa} Theorems 7.28, 5.37
and the proof of 5.38). 
\end{proof}

This result can be quickly upgraded to full convergence using a powerful decay estimate.

\begin{theorem}
\label{TMprofilefullconv}
Let $X$ be an $H^1(ds)$ curve shortening flow and $Y$ its asymptotic profile.
There is a non-trivial $Y_\infty\in H^1(\S^1,\R^2)$ such that $Y(t)\to Y_\infty$ 
in $C^0$ as $t\to\infty$.
\end{theorem}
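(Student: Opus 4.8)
The plan is to upgrade the subconvergence of Theorem \ref{TMprofile} to full convergence by showing that the velocity $Y_t$ of the asymptotic profile is integrable in time in the $C^0$-norm. This makes $t\mapsto Y(t)$ a Cauchy net in $C^0$, so it converges, and its limit must coincide with the subsequential limit already produced in Theorem \ref{TMprofile}.

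First I would compute $Y_t$ directly from the definition $Y=e^t\big(X(t,\cdot)-X(t,0)\big)$ together with the flow equation \eqref{eq:h1csf}. Differentiating gives $Y_t=Y+e^t\big(X_t(t,\cdot)-X_t(t,0)\big)$, and substituting $X_t=-X-\int X\,G$ one finds that the undifferentiated terms cancel, leaving
\[
Y_t(t,u)=-e^t\int_0^\L X(t,\tilde s)\big(G(s(u),\tilde s)-G(s(0),\tilde s)\big)\,d\tilde s,
\]
where $s(\cdot)$ denotes the arclength function of $X(t,\cdot)$. This clean cancellation of the local terms is what makes the subsequent estimate feasible.

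The key is then the decay estimate. Using $\enorm{\partial_s G}\le\tfrac12$ (cf. \eqref{eq:Gest2}) one has $\enorm{G(s(u),\tilde s)-G(s(0),\tilde s)}\le\tfrac12\enorm{s(u)-s(0)}\le\tfrac12\L$, and combined with the uniform bound $\norm{X(t)}_\infty\le\norm{X(0)}_\infty$ from Lemma \ref{uniformLinf} this yields
\[
\norm{Y_t(t)}_\infty\le\tfrac12\,e^t\,\norm{X(0)}_\infty\,\L(X(t))^2.
\]
Now I would feed in the exponential length decay coming from Lemma \ref{alpha}: taking $\alpha=\tfrac34$ gives $\L(X(t))\le c\,e^{-3t/4}$ for $t$ large, whence $\norm{Y_t(t)}_\infty\le C e^{-t/2}$. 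The essential point is that the decay of $\L^2$ beats the amplifying factor $e^t$; any rate $\alpha>\tfrac12$ suffices. Since $\int^\infty e^{-t/2}\,dt<\infty$, for large $t_2>t_1$ we obtain $\norm{Y(t_2)-Y(t_1)}_\infty\le\int_{t_1}^{t_2}\norm{Y_t}_\infty\,dt\to0$, so $Y(t)$ is Cauchy and converges in $C^0$ to a limit which must equal the $Y_\infty$ of Theorem \ref{TMprofile}; in particular $Y_\infty$ is non-trivial. Finally, membership $Y_\infty\in H^1(\S^1,\R^2)$ follows from the uniform $H^1$-bound on $Y(t)$ established in the proof of Theorem \ref{TMprofile}: a subsequence converges weakly in $H^1$, its weak limit agrees with the strong $C^0$ limit, and weak lower semicontinuity of the $H^1$-norm gives $Y_\infty\in H^1$.

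I expect the crux to be the decay estimate in the third step — namely securing the cancellation in $Y_t$ and then matching the exponential length decay from Lemma \ref{alpha} against the rescaling factor $e^t$. Once $\norm{Y_t(t)}_\infty\lesssim e^{-t/2}$ is in hand, the Cauchy argument and the identification of the limit are routine.
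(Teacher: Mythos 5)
Your proposal is correct and follows essentially the same route as the paper: both compute $Y_t$, exploit the cancellation of the local terms, bound the difference of Green's functions via the $\tfrac12$-Lipschitz property \eqref{eq:Gest2}, and use exponential length decay to beat the $e^t$ rescaling factor before closing with a Cauchy/integrability argument. The only cosmetic differences are that the paper rewrites the integrand in terms of $Y$ (using $\int_0^\L G\,d\tilde s=-1$) and the uniform bounds on $\norm{Y}_\infty$ and $\L(Y)$ from Theorem \ref{TMprofile} to get the sharper rate $e^{-2t}$, whereas you keep $e^tX$ and invoke Lemma \ref{alpha} to get $e^{-t/2}$ — either rate suffices.
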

\begin{proof}
For the evolution of $Y$ we calculate
\[
	Y_t(t,u) = \int Y(t,\tilde u) (G(X;0,s_X(\tilde u)) - G(X;s_X(u),s_X(\tilde u))) |X_{\tilde u}|\,d\tilde u
\,.
\]
The $\frac12$-Lipschitz property for $G$ (from \eqref{eq:Gest2}) implies that
\[
\big|
(G(X;0,s_X(\tilde u)) - G(X;s_X(u),s_X(\tilde u)))
\big|
\le \frac12 |s_X( u)| \le \frac12\SL(Y(t))e^{-t} \le ce^{-t}
\,,
\]
by the estimate \eqref{LYupper} in Theorem \ref{TMprofile}.
The estimates in the proof of Theorem \ref{TMprofile} include $||Y||_\infty \le c$.
Using these we find
\begin{align*}
|Y_t(t,u)|
&= \bigg|
	\int Y(t,\tilde u) (G(X;0,s_X(\tilde u)) - G(X;s_X(u),s_X(\tilde u))) |X_{\tilde u}|\,d\tilde u
	\bigg|
\\
&\le ce^{-2t}||Y||_\infty \SL(Y(t)) \le ce^{-2t}
\,.
\end{align*}
Exponential decay of the velocity implies  full convergence by a standard argument (a
straightforward modification to $C^0$ of the $C^\infty$ argument in
\cite[Appendix A]{AMWW} for instance).
\end{proof}

The convergence result (Theorem \ref{TMprofilefullconv}) applies in great generality.
If the initial data $X_0$ is better than a generic map in $H^1(\S^1,\R^2)\setminus\SC$,
for instance if it is an immersion, has well-defined curvature, or further
regularity, then this is preserved by the flow.
That claim is proved in the next section (see Theorem \ref{TMglobCk}).
In these cases, we expect the asymptotic profile also enjoys these additional properties.
This is established in the $C^2$ space in the section following that (see Theorem \ref{TMcurvatureprofile}).

\begin{remark}
The asymptotic shape is very difficult to determine, in particular, it is not
clear if there is a closed-form equation that it must satisfy.
As mentioned earlier, we see this in the numerics.
We can also see this in the decay of the flow velocity $Y_t$.
It decays not because the shape has been optimised to a certain point, but simply because sufficient time has passed so that the exponential decay terms take over.
The asymptotic profile of the flow is effectively constrained to a tubular neighbourhood of $Y(0)$.
\end{remark}

\subsection{The $H^1(ds)$-flow in $\imm^k$ spaces}

Observe from \eqref{eq:gradbound0} and \eqref{eq:fu} that if $\gamma\in C^1$ then
$\grad \L_\gamma$ is also $C^1$. We might therefore consider the flow with $\imm^1$
initial data as an ODE on $\imm^1$ (instead of $H^1(\S^1,\R^2)\setminus \SC$). In fact the same is true for $\imm^2$
( and moreover $\imm^k$) as we now demonstrate. 

Assume $x\in C^2$ is an immersion, then 
\begin{equation}\label{eq:guu} G_{uu}=\partial_u(|x_u| G_s)=|x_u|^2 G_{ss} -\langle x_{uu},x_s \rangle G_s \end{equation} 
and using $G_{ss}=G_{\tilde s\tilde s}$ as well as integrating by parts we obtain
\begin{align}\nonumber
	(\grad \L_x)_{uu}&=x_{uu}+\int x G_{uu}\, d\tilde s\\
&=x_{uu}+|x_u|^2\int x_{\tilde s\tilde s}G \, d\tilde s-\langle x_{uu}, x_s\rangle \int x_{\tilde s} G \, d\tilde s \label{eq:gradluu}
\,.
\end{align}
Now from 
\begin{equation} \label{eq:xss}
	x_{ss}=\frac{x_{uu}}{|x_u|^2}-\langle x_{uu},x_u\rangle \frac{x_u}{|x_u|^4}
\end{equation}
we have that $\norm{x_{ss}}_\infty$ is bounded provided $|x_u|$ is bounded
away from zero for all $u$. Assuming this is the case we have furthermore from
\eqref{eq:gradluu} that $|(\grad \L_x)_{uu}|$ is bounded and  $\grad
\L_x \in C^2$. We may therefore consider the flow as an ODE in
$\imm^2$. Short time existence requires a $C^2$ Lipschitz estimate. One
can estimate $\norm{\grad \L_x-\grad \L_y}_{C^1} $ much the same as in
Lemma \ref{blip}. From \eqref{eq:gradluu} and product expansions as in Lemma
\ref{blip}:
\begin{align*}
	|(\grad \L_x)_{uu}-(\grad \L_y)_{uu}| \leq 
\begin{multlined}[t]
|x_{uu}-y_{uu}|+c_1|\,|x_u|^2-|y_u|^2|+c_2|x_{ss}-y_{ss}|
	\\+c_3|G(x)-G(y)|+c_4|x_{uu}-y_{uu}|+c_5|x_s-y_s|
\end{multlined}
\,.
\end{align*}
The result now follows from the Lipschitz estimate for $G$, together with
estimates $|x_s-y_s|\leq c|x_u-y_u|$ and $|x_{ss}-y_{ss}|\leq
c|x_{uu}-y_{uu}|$ which also follow from product expansions using eg
\eqref{eq:xss}. 

It follows from \eqref{eq:Xuest} that if $X(0)$ is $C^1$, then $X(t)$ is $C^1$
for all $t<\infty$, and moreover $|X_u(t)|$ is bounded away from zero for
all $t<\infty$, so we have global existence for the $C^1$ flow. 

Suppose $X(t)$ is $C^2$ for a short time, then from \eqref{eq:gradluu}
\begin{align*}
	\frac{d}{dt}|X_{uu}|^2
&=
\begin{multlined}[t]
-2|X_{uu}|^2-2|X_u|^2 \left\langle X_{uu},\int X_{\tilde s\tilde s} G \, d\tilde s\right\rangle 
	\\
+2 \langle X_{uu},X_s\rangle \left\langle X_{uu},\int X_{\tilde s} G \, d\tilde s \right\rangle
\,.
\end{multlined}
\end{align*}
From \eqref{eq:xss} notice $|X_{ss}|\leq 2|X_{uu}|\,|X_u|^{-2}$ and therefore
\begin{align*}
\frac{d}{dt}|X_{uu}|^2 & \leq 2 |X_{uu}|\left (c \norm{X_{uu}}_\infty+|X_{uu}| \right)
\end{align*} 
where $c=\norm{X_u}_\infty \sup_u |X_u|^{-1}$.
Supposing that at time $t_0$, $\norm{X_{uu}}_\infty$ is attained at $u_0$, it follows that
\[
	\frac{d}{dt}|X_{uu}|(u_0,t_0)  \leq c |X_{uu}|(u_0,t_0) 
\]
and therefore $\norm{X_{uu}(u_0,t)}\leq e^{ct}$.
By the short time existence $\norm{X_{uu}}_\infty$ is continuous in $t$, so in fact 
\[ \norm{X_{uu}}_\infty\leq e^{ct} \]
and we have global $C^2$. 

For the $\imm^k$ case there is little that is novel and much that is tedious. Claim:
\begin{equation}\label{eq:guk}
	\partial_u^k G=\enorm{X_u}^k\partial_s^kG-\langle \partial_u^k X,X_s\rangle G_s+\sum_i^{k-1} P_i(X_u,\ldots ,\partial_u^{k-1} X) \partial_s^i G
\end{equation} 
where each $P_i$ is polynomial in the derivatives of $X$ up to order $k-1$. From \eqref{eq:guu} this is true for $k=2.$ Assuming it is true for $k$ we have
\begin{align*}
	\partial_u^kG&=
\begin{multlined}[t]
\enorm{X_u}^{k+1}\partial_s^{k+1}G+k\enorm{X_u}^{k-2}\langle X_{uu},X_u\rangle \partial_s^k G-\langle \partial_u^{k+1}X,X_s \rangle G_s \\
-\langle \partial_u^kX,X_{ss}\rangle \enorm{X_u} G_s-\langle \partial_u^k X,X_s\rangle \enorm{X_u}G_{ss}+\partial_u\left (\sum_i^{k-1} P_i \partial_s^i G \right) 
\end{multlined}\\
&=\enorm{X_u}^{k+1}\partial_s^{k+1}G-\langle \partial_u^{k+1},X_s\rangle G_s+\sum_i^k \tilde P_i \partial_s^iG
\end{align*}
where each $\tilde P_i$ is polynomial in the derivatives of $X$ up to order $k$. 
From \eqref{eq:guk} and $\partial_s^kG=-\partial_{\tilde s}^kG $ we can calculate
\begin{align}\nonumber
\partial_u^k (\grad \L_X) &=\partial_u^k X+\int X\partial_s^k G \, d\tilde s\\
&=
\begin{multlined}[t]
\partial_u^kX+\enorm{X_u}^k\int \partial_{\tilde s}^k X G\, d\tilde s-\langle \partial_s^k X,X_s\rangle \int X_{\tilde s} G \, d\tilde s\\
 +\sum_i^{k-1}P_i\int \partial_{\tilde s}^i X G\, d\tilde s
\end{multlined} \label{eq:gradluk}
\end{align}
and observe that if $X$ is in $\imm^k$ then so is $\grad \L_X$. We may therefore
consider the gradient flow  as an ODE in $\imm^k$. Short time existence
requires a $C^k$ Lipschitz estimate. We claim that such an estimate can be
proved inductively using \eqref{eq:gradluk} by similar methods to those used
above for the $C^2$ case, except with longer product expansions. As it is the
same technique but only with a longer proof, we omit it.

In summary, we have:

\begin{theorem}
\label{TMglobCk}
Let $k\in\N$ be a natural number.
For each $X_0\in \imm^k$ there exists a unique eternal
$H^1(ds)$ curve shortening flow $X:\S^1\times\R\rightarrow\R^2$ in
$C^1(\R; \imm^k)$ such that $X(\cdot,0) = X_0$.
\end{theorem}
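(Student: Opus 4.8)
The plan is to run, for each fixed $k$, the same ODE strategy used for $k=0,1$ but now on the open subset $\imm^k\subset C^k(\S^1,\R^2)$, treating \eqref{eq:ode} as an autonomous ODE in the Banach space $C^k$. Local existence and uniqueness will follow from the Picard--Lindel\"of theorem exactly as in Theorem \ref{shortexistence}, and global existence will follow by ruling out the only two ways a maximal solution can fail to extend: the curve ceasing to be an immersion, or the $C^k$ norm blowing up in finite time. The first of these is already controlled by the immersion estimate \eqref{eq:Xuest} from Lemma \ref{immersion}, which keeps $|X_u|$ bounded away from zero on any finite interval, so the real work is the second.

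First I would establish that $F$ (equivalently $-\grad\L_X$) maps $\imm^k$ into $C^k$ and is locally Lipschitz there. The key input is the formula \eqref{eq:guk} for $\partial_u^k G$, proved by induction on $k$ as in the text: it exhibits $\partial_u^k G$ as $\enorm{X_u}^k\partial_s^k G$ plus lower-order terms whose coefficients $P_i$ are polynomials in $X_u,\dots,\partial_u^{k-1}X$. Substituting into $\partial_u^k(\grad\L_X)$ and integrating by parts using $\partial_s^k G=-\partial_{\tilde s}^k G$ yields \eqref{eq:gradluk}, which shows $\grad\L_X\in\imm^k$ whenever $X\in\imm^k$. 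For the Lipschitz estimate I would follow the product-expansion scheme of Lemma \ref{blip}: each factor in \eqref{eq:gradluk} (the power $\enorm{X_u}^k$, the coefficients $P_i$, the integrals $\int\partial_{\tilde s}^i X\,G\,d\tilde s$, and $G$ itself) is bounded and Lipschitz on a ball $Q_b$ by the Lipschitz property of $G$ together with the elementary bounds $|x_s-y_s|\le c|x_u-y_u|$ and $|x_{ss}-y_{ss}|\le c|x_{uu}-y_{uu}|$ coming from \eqref{eq:xss}. The estimate closes inductively, controlling the $C^k$ difference $\norm{\grad\L_x-\grad\L_y}_{C^k}$ in terms of the $C^k$ difference of the arguments; this is the same technique as before with longer expansions.

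With local existence in hand, for global existence I would derive a Gronwall-type differential inequality for the top-order quantity $\frac{d}{dt}\enorm{\partial_u^k X}^2$ directly from \eqref{eq:gradluk}, just as was done for $\enorm{X_{uu}}^2$ in the $C^2$ case. The dangerous term $\enorm{X_u}^k\partial_s^k G$ integrates against $X$ to produce a quantity bounded by $\norm{\partial_u^k X}_\infty$ times lower-order data, so that at a spatial maximum of $\enorm{\partial_u^k X}$ one obtains $\frac{d}{dt}\norm{\partial_u^k X}_\infty\le c\,\norm{\partial_u^k X}_\infty + (\text{lower order})$, where $c$ depends only on the $C^{k-1}$ norm of $X$ and the lower bound on $\enorm{X_u}$. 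Inducting on $k$ (so that the $C^{k-1}$ norm is already known to grow at most exponentially) and integrating gives $\norm{\partial_u^k X}_\infty\le$ an exponential in $t$ on every finite interval, hence no finite-time blow-up. Combined with \eqref{eq:Xuest}, this excludes both failure modes, so the solution extends to all of $\R$.

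Finally, uniqueness in $C^1(\R;\imm^k)$ is immediate: since $C^k\hookrightarrow H^1$ and $\imm^k\subset H^1(\S^1,\R^2)\setminus\SC$, any $C^k$ flow is in particular an $H^1$ flow, so it must coincide with the unique eternal flow furnished by Theorem \ref{TMglob}. The main obstacle I anticipate is the bookkeeping in the Gronwall step: one must verify that, at each order $k$, every term arising from \eqref{eq:gradluk} other than the single genuinely top-order term is controlled by strictly lower-order norms, so that the induction on $k$ actually closes and the constant $c$ never depends on $\partial_u^k X$ itself. This is routine in principle but delicate to organize, which is precisely why the preceding discussion carries it out explicitly only for $k\le 2$ and merely indicates the inductive pattern for general $k$.
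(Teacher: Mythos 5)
Your proposal follows essentially the same route as the paper: Picard--Lindel\"of in $C^k$ using the inductive formula \eqref{eq:guk} and the resulting expression \eqref{eq:gradluk} for $\partial_u^k(\grad\L_X)$, a Lipschitz estimate by product expansions as in Lemma \ref{blip}, preservation of the immersion condition via \eqref{eq:Xuest}, and a Gronwall inequality for $\norm{\partial_u^k X}_\infty$ at a spatial maximum with induction on $k$ to rule out finite-time blow-up --- which is exactly the scheme the paper carries out explicitly for $k\le 2$ and indicates for general $k$. Your additional remark that uniqueness follows from the embedding $\imm^k\subset H^1(\S^1,\R^2)\setminus\SC$ and Theorem \ref{TMglob} is a clean way to dispatch that point, consistent with the paper.
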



\subsection{Curvature bound for the rescaled flow}

In this subsection, we study the $H^1(ds)$ curve shortening flow in the space of $C^2$ immersions.
This means that the flow has a well-defined notion of scalar curvature.
Note that while the arguments in the previous section show that the $C^2$-norm of $X$ is bounded for all $t$, they do not show that this bound persists through to the limit of the asymptotic profile $Y_\infty$.
They need to be much stronger for that to happen: not only uniform in $t$, but on $X$ they must respect the rescaling factor.

The main result in this section (Theorem \ref{TMcurvatureprofile}) states that
this is possible, and that the limit $Y_\infty$ of the asymptotic profile in
the $C^2$-space enjoys $C^2$ regularity, being an immersion with bounded
curvature.

We start with the commutator of $\partial_s$ and $\partial_t$ along the flow $X(t,u)$. Given a differentiable function $f(u,t)$:
\begin{align}\label{eq:commutator}
 f_{st}=-\frac{\langle X_{ut},X_u\rangle }{\enorm{X_u}^2}f_u+\frac{1}{\enorm{X_u}}f_{ut}=f_{ts}-\langle X_{ts},X_s\rangle f_s
	\,.
\end{align}
From $X_{ss}=kN$ we have $X_{sst}=kN_t+k_tN$ and then using $\langle N_t,N\rangle =0$,
\[
	k_t=\langle X_{sst},N\rangle
\,.
\]
Applying \eqref{eq:commutator} twice
\begin{align*}
	 X_{sst}&=X_{sts} -\langle X_{ts},X_s\rangle X_{ss}\\
&=\left (X_{ts}-\langle X_{ts},X_s\rangle X_s\right )_s -\langle X_{ts},X_s\rangle  X_{ss}\\
&=X_{tss}-\langle X_{tss},T\rangle T-\langle X_{ts},kN\rangle T-2\langle X_{ts},T\rangle kN
\end{align*} 
and then
\begin{align*}
	k_t& =\langle X_{tss},N\rangle -2k\langle X_{ts},T\rangle\\
&=-\langle (\grad \L_X)_{ss},N\rangle +2k\langle (\grad \L_X)_s,T\rangle \\
&= -k-\langle \grad \L_X, N\rangle +2k\langle T+T\ast G,T\rangle \\
&= k-\langle \grad \L_X, N\rangle + 2k\langle T\ast G,T\rangle
\end{align*}
where $(\grad \L_X)_{ss}-\grad \L_X=kN$ (from \eqref{eq:gradODE}) and $\grad \L_s=T+T\ast G$  have been used. Therefore
\begin{equation}\label{eq:dksquared}
	\frac{d}{dt}k^2= 2k^2-2\langle \grad \L_X, kN\rangle +4k^2 \langle T*G, T\rangle 
\end{equation}
Now letting
\[ \varphi(t):= k_Y^2=e^{-2t}k^2 \]
using \eqref{eq:gradbound0} and \eqref{eq:TGbound} to estimate  \eqref{eq:dksquared}, we find
\[
	\varphi'(t)\leq 2e^{-2t}(|k|\L+k^2\L^2)
              \leq 2e^{-2t}+\frac52\L^2\varphi(t)
	      \,.
\]
Note that in the second inequality we used $a \le 1 + a^2/4$, which holds for any $a\in\R$.
Integration gives 
\[ |\varphi(t)|\leq c_1+\int_0^t c\L^2|\varphi| d\tau \] 
and so by the Bellman inequality (\cite{Pachpatte:1998aa} Thm. 1.2.2)
\[ \varphi(t)\leq c e^{\int_0^t \L^2 \, d\tau } \]
Since $\L(X)$ decays exponentially \eqref{eq:Lexp}, we have that $\varphi$ is uniformly bounded.

This gives stronger convergence for $Y$ in the case of $C^2$ data, and we conclude the following. (Note that the fact $Y_\infty$ is an immersion followed already from \eqref{EQimmersionprofile}.)

\begin{theorem}
\label{TMcurvatureprofile}
Let $X$ be an $H^1(ds)$ curve shortening flow with $X(0)\in \imm^2 $, and $Y$ its asymptotic profile.
There is a non-trivial $Y_\infty\in \imm^2$ such that $Y(t)\to Y_\infty$
in $C^0$ as $t\to\infty$. That is, the asymptotic profile converges to a unique
limit that is immersed with well-defined curvature.
\end{theorem}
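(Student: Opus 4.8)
The plan is to bootstrap the $C^0$ convergence of Theorem \ref{TMprofilefullconv} up to $C^2$ convergence, using the three facts already in hand: $Y(t)\to Y_\infty$ in $C^0$ with $Y_\infty\in H^1(\S^1,\R^2)\setminus\SC$; the two-sided speed bound \eqref{EQimmersionprofile}, which pins $|Y_u(t,u)|$ between fixed multiples of $|X_u(0,u)|$ uniformly in $t$; and the uniform curvature bound $\varphi(t)=k_Y^2=e^{-2t}k^2\le C$ just established. The strategy is to decompose the second derivative of $Y$ into its tangential and normal parts, control each uniformly in $t$, and then pass to the limit by compactness, identifying the limit with $Y_\infty$ through uniqueness of the $C^0$ limit.

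First I would upgrade to $C^1$. Differentiating the velocity formula $Y_t(t,u)=\int Y(t,\tilde u)\,(G(X;0,s_X(\tilde u))-G(X;s_X(u),s_X(\tilde u)))\,|X_{\tilde u}|\,d\tilde u$ from the proof of Theorem \ref{TMprofilefullconv} once in $u$, only the second kernel contributes, so $|Y_{tu}|\le\tfrac12|X_u|\int|Y|\,ds_X$ by \eqref{eq:Gest2}. Using $\|Y\|_\infty\le c$ from Theorem \ref{TMprofile}, $|X_u|\le ce^{-t}$ from \eqref{eq:Xuest} and the exponential length decay \eqref{eq:Lexp}, this gives $\|Y_{tu}\|_\infty\le ce^{-(1+C)t}$. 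Integrating in time shows $Y_u(t)$ is Cauchy in $C^0$, so $Y_\infty\in C^1$, $Y_u(t)\to (Y_\infty)_u$ uniformly, and by \eqref{EQimmersionprofile} the limiting speed is bounded below; hence $Y_\infty$ is an immersion.

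For the $C^2$ step I would write $Y_{uu}=\partial_u|Y_u|\,T_Y+|Y_u|^2 k_Y N_Y$. The normal part is uniformly bounded directly from $\varphi\le C$ together with the speed bounds, so the entire remaining content sits in the tangential part $\partial_u|Y_u|=e^t\partial_u|X_u|=\langle Y_{uu},T_Y\rangle$, which the curvature bound does not see. \emph{This is the main obstacle.} To handle it I would set $p:=\partial_s\log|X_u|$ and derive its evolution: from \eqref{eq:fu} one gets $\partial_t\log|X_u|=-(1+b)$ with $b:=\langle\int_0^\L X_{\tilde s}G\,d\tilde s,\,T\rangle$, and then the commutator \eqref{eq:commutator} yields $\partial_t p=(1+b)p-b_s$. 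Since $|b|\le\L^2/2$ by \eqref{eq:TGbound} and $b_s$ is controlled by $\L(X)$ together with the curvature bound $\varphi\le C$, a Bellman/Gr\"onwall argument exactly parallel to the curvature estimate preceding the theorem bounds $p$ by $ce^{t}$; as $|X_u|^2\le ce^{-2t}$ and $\partial_u|Y_u|=e^t|X_u|^2p$, this gives $\partial_u|Y_u|$ uniformly bounded, hence $\|Y_{uu}(t)\|_\infty\le C$.

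Finally, with uniform $C^2$ bounds and the $C^1$ convergence above, Arzela--Ascoli gives a subsequence $Y(t_i)\to Y_\infty$ in $C^1$ with uniformly bounded second derivatives, so $Y_\infty\in W^{2,\infty}$ is an immersion with bounded weak curvature. To reach genuine $C^2$ I would show $Y_{uu}(t)$ converges in $C^0$: since $T_Y$ and $|Y_u|$ already converge uniformly, it suffices that $k_Y(t)$ and $\partial_u|Y_u(t)|$ converge, which follows from time-integrability of $Y_{tuu}$. The delicate point here, beyond the tangential estimate above, is that differentiating the velocity twice in $u$ produces the $\delta$-singularity in $G_{uu}$ (via \eqref{eq:guu} and \eqref{eq:fundamental}); I would isolate that singular term, which evaluates to $|X_u|^2 Y$ and decays like $e^{-2t}$, and estimate the remaining regular terms using the tangential bound, obtaining $\int^\infty\|Y_{tuu}\|_\infty\,dt<\infty$ and therefore $Y_\infty\in\imm^2$.
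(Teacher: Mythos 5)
Your proposal is correct in outline and shares the paper's one essential ingredient --- the uniform bound on $\varphi=e^{-2t}k^2$ obtained from \eqref{eq:dksquared} via the Bellman inequality --- but it goes considerably further than the paper's own argument. The paper stops there: it combines the curvature bound with the immersion property \eqref{EQimmersionprofile} and the $C^0$ convergence of Theorem \ref{TMprofilefullconv} and simply declares the conclusion, leaving the passage to the limit (and in particular the regularity of $Y_\infty$ in the given parametrisation) implicit. You correctly observe that the curvature bound alone does not control the full parametric second derivative, since $Y_{uu}=\partial_u|Y_u|\,T_Y+|Y_u|^2k_YN_Y$ and the tangential part $\partial_u|Y_u|$ is invisible to $k$; your estimate for $p=\partial_s\log|X_u|$ via its evolution $\partial_tp=(1+b)p-b_s$, together with the explicit $C^1$ upgrade from $\|Y_{tu}\|_\infty\le ce^{-(1+C)t}$ and the final integrability of $Y_{tuu}$, fills this in and actually delivers the literal statement $Y_\infty\in\imm^2$ rather than the weaker ``immersed with bounded curvature'' that the paper's argument supports. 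What the paper's route buys is brevity; what yours buys is a genuine proof of $C^2$ regularity of the limit.

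One step needs more care than you give it: the bound on $b_s$. Writing $b_s=\langle\int X_{\tilde s}G_s\,d\tilde s,T\rangle+k\langle\int X_{\tilde s}G\,d\tilde s,N\rangle$ and integrating by parts in the first term, the naive estimate is $|\int X_{\tilde s\tilde s}G\,d\tilde s|\le\|k\|_\infty\int|G|\,d\tilde s=\|k\|_\infty\le ce^{t}$, which does \emph{not} decay; feeding $|b_s|\le ce^{t}$ into Gr\"onwall for $p$ yields only $p\le c(1+t)e^{t}$ and hence $\partial_u|Y_u|\le c(1+t)$, which is not uniform. To rescue the claim you must exploit the cancellation $\int X_{\tilde s\tilde s}\,d\tilde s=0$, replacing $G$ by $G-G(s,s)$ exactly as in the proof of \eqref{eq:TGbound}, to gain a factor of $\L^2$; combined with the improved decay rate $\L\le ce^{-\alpha t}$, $\alpha>1/2$, from Lemma \ref{alpha}, this gives $|b_s|\le ce^{t}\L^2\le ce^{-t/2}$ and the Gr\"onwall argument then closes with $p\le ce^{t}$ as you assert. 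With that repair the proposal is sound.
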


\subsection{Isoperimetric deficit}

In this section, we show that the isoperimetric deficit of the limit of the
asymptotic profile $Y_\infty$ is bounded in terms of the isoperimetric profile
of $X_0$.
This is in a sense optimal, because of the great variety of limits for the
rescaled flow, it is not reasonable to expect that the profile always improves.
Indeed, numerical evidence suggests that the profile is not monotone under the flow.
Nevertheless, it is reasonable to hope that the flow does not move the
isoperimetric deficit too far from that of the initial curve, and that's what
the main result of this section confirms.

\subsubsection{Area}
We start by deriving the evolution of the signed enclosed area.
Using \eqref{eq:commutator} we find
\[
	X_{st}=X_{ts}-\langle X_{ts},X_s\rangle X_s=X_{ts}-\langle X_{ts},T\rangle T 
	\,.
\]
Differentiating $\langle N,T \rangle =0$ and $\langle N,N\rangle =1$ with respect to $t$ yields
\begin{align*}
	\langle N_t,T\rangle &=-\langle N,X_{st} \rangle\,,\text{ and}\\
\langle N_t, N\rangle &=0\,.
\end{align*}
Therefore 
\begin{equation} \label{eq:normalevo}
N_t=-\langle N,X_{st}\rangle T=-\langle N, X_{ts} \rangle T	
\,.
\end{equation}
Using the area formula $A=-\frac{1}{2}\int_0^\L \langle X, N\rangle ds$, and $ds=\enorm{X_u}du$ implies $\frac{d}{dt}ds=\langle X_{ts},X_s \rangle ds$,
we calculate the time evolution of area as
\begin{align*}
	\frac{dA}{dt}&=-\frac{1}{2}\int_0^\L \langle X_t,N\rangle+\langle X,N_t\rangle+\langle X,N\rangle \langle X_{ts},X_s\rangle ds\\
&=-\frac{1}{2}\int_0^\L \langle X_t,N\rangle -\langle X,T\rangle \langle N,X_{ts}\rangle+\langle X,N\rangle \langle X_{ts},X_s\rangle ds\\
&= -\frac{1}{2}\int_0^\L \langle X_t, N\rangle+ \langle X_{ts},\langle X,N\rangle T-\langle X,T\rangle N\rangle ds
\,.
\end{align*}
Now since $\partial_s \left (\langle X,N\rangle T-\langle X,T\rangle N\right )=-N$, integration by parts gives
\begin{align} \label{eq:areaevo}
\frac{dA}{dt}&=-\int_0^\L	\langle X_t,N\rangle ds
\,.
\end{align}

\subsubsection{Estimate for the deficit}
Consider the isoperimetric deficit
\[
	\D:= \L^2-4\pi A\,.
\]
From $\frac{d}{dt} \L =\int \langle kN,\grad \L_X\rangle ds$ and \eqref{eq:areaevo} we find
\[ 
\frac{d}{dt} \D =\int (2\L k-4\pi )\langle N , \grad \L_X \rangle  \, ds
\,.
\]
With the gradient in the form 
\[ \grad \L_X=\int (X(\tilde s)-X(s)) G(s,\tilde s)\, d\tilde s\]
 we use the second order Taylor approximation
\[
	G= \frac{-1}{2\sinh(\L/2)}\left (1+\frac{1}{2} \left (|s-\tilde s|-\L/2 \right )^2+o(\L^4)\right)
\,.
\]
Note that $\int X(\tilde s)-X(s) d \tilde s=\L (\bar X-X(s))$ and moreover
\[\int (2\L k -4\pi )\langle N, \L(X-\bar X)\rangle \,ds = -2\L\D \]
where the $\bar X$ term vanishes because $kN$ and $N$ are both derivatives and $\L \bar X$ is independent of $s$. Hence
\begin{align*}
\frac{d}{dt}\D & =\begin{multlined}[t]
\frac{1}{2\sinh(\L/2)}
\left (-\L\D\left (2+\frac{\L^2}{8}\right ) \right. \\
-\left. \int (2\L k- 4\pi)\langle N,\int (X(\tilde s)-X(s))\left (\frac{1}{2}|s-\tilde s|^2-\L|s-\tilde s|+o(\L^4) \right)\, d\tilde s \, ds \right )
\,.
\end{multlined}\\
\end{align*}
For the terms involving $k$ we have, for example, 
\begin{equation*} 
\begin{multlined}[t]\int \left \langle \L kN, \int (X(\tilde s)-X(s))(s-\tilde s)^2 d\tilde s \right \rangle ds\\
=	-\int \left \langle \L T,\int 2(X(\tilde s)-X(s))(s-\tilde s)-T(s)(s-\tilde s)^2 \, d\tilde s \right \rangle \, ds
\end{multlined}
\end{equation*}
and therefore we estimate
\[
\frac{d}{dt} \D \leq \frac{1}{2\sinh(\L/2)}
\left (-\L\D\left (2+\frac{\L^2}{8}\right ) +o(\L^5) \right) \\
\]
Because $\frac{\L}{2\sinh(\L/2)}\leq 1$ and $\D\ge0$, we have
\[
	\frac{d}{dt} \D \leq -\frac{\L}{2\sinh(\L/2)} 2\D +o(\L^4)
	\,.
\]
For the isoperimetric deficit $\D_Y$ of the asymptotic profile $Y$, we have 
\[ \D_Y=e^{2t}\D,\qquad \frac{d}{dt} \D_Y=e^{2t}\frac{d}{dt} \D +2\D_Y \]
hence 
\[
	\frac{d}{dt} \D_Y \leq 2\D_Y\bigg(1-\frac{\L}{2\sinh(\L/2)}\bigg)+o(\L^4) e^{2t} 
	\,.
\]
From Lemma \ref{alpha} we can take $t\ge t_{3/4}$ such that $o(\L^4) e^{2t}$ decays like $e^{-t}$ for $t>t_{3/4}$.
If $\frac34 \geq 1-\norm{X(0)}_\infty \L(0) $ we can find the required $t_{3/4}$ by solving $\frac34 = 1- \norm{X(0)}_\infty \L(0)e^{-Ct_{3/4}}$, otherwise $t_{3/4}=0$.
The constant $C$ is from the gradient inequality and also depends on $\vn{X(0)}_\infty$.
Therefore the estimate for the integral of the extra terms depends only on $X(0)$.

Integrating with respect to $t$ gives
\[
	\D_Y \leq \D_Y(0)c(X(0))e^{\int_{t_{3/4}}^t1-\frac{\L}{2\sinh(\L/2)} \,d \tau}\,.
\]
The Taylor expansion for $x\mapsto x/(2\sinh(x/2))$ yields
\[
	\D_Y \leq \D_Y(0)c(X(0))e^{\int_{t_{3/4}}^t \frac{\L^2}{24} + o(\L^4) \,d \tau}\,.
\]
Now using again the exponential decay of $\L$ we find
\[
	\D_Y \le c(X(0))\D_X(0)\,.
\]
Summarising, we have:

\begin{proposition}
\label{PNisodef}
Let $X$ be an $H^1(ds)$ curve shortening flow and $Y$ its asymptotic profile.
There is a non-trivial $Y_\infty\in H^1(\S^1,\R^2)$ such that $Y(t)\to Y_\infty$ 
in $C^0$ as $t\to\infty$.
Furthermore, there is a constant $c = c(\vn{X(0)}_\infty)$ such that the isoperimetric deficit of $Y_\infty$ satisfies
\[
	\D_{Y_\infty} \le c\D_{X(0)}
	\,.
\]
\end{proposition}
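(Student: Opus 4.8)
The convergence statement $Y(t)\to Y_\infty$ in $C^0$ with $Y_\infty\in H^1(\S^1,\R^2)\setminus\SC$ nontrivial is exactly the content of Theorem \ref{TMprofilefullconv}, so the plan is to concentrate entirely on the deficit estimate $\D_{Y_\infty}\le c\,\D_{X(0)}$. The guiding idea is to exploit the scaling relation $\D_Y(t)=e^{2t}\D_X(t)$ (which follows from $\L(Y)=e^t\L(X)$ and $A(Y)=e^{2t}A(X)$) and to derive a differential inequality for $\D_X=\L^2-4\pi A$ along the unrescaled flow that improves $\D_X$ just fast enough to survive multiplication by $e^{2t}$.

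To set this up I first need the evolution of the enclosed signed area. Starting from $A=-\tfrac12\int_0^\L\langle X,N\rangle\,ds$, I would compute $\partial_t N$ (using $\langle N_t,N\rangle=0$ together with the $t$-derivative of $\langle N,T\rangle=0$) and the evolution $\tfrac{d}{dt}ds=\langle X_{ts},X_s\rangle\,ds$ of the line element, then integrate by parts via $\partial_s(\langle X,N\rangle T-\langle X,T\rangle N)=-N$ to collapse everything to $\tfrac{dA}{dt}=-\int_0^\L\langle X_t,N\rangle\,ds$. Combining this with the length evolution $\tfrac{d}{dt}\L=\int\langle kN,\grad\L_X\rangle\,ds$ and with $X_t=-\grad\L_X$ yields
\[
\frac{d}{dt}\D=\int_0^\L(2\L k-4\pi)\langle N,\grad\L_X\rangle\,ds.
\]

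The analytic heart is to extract the leading order of the right-hand side as $\L\to0$. I would write the gradient in the symmetric form $\grad\L_X=\int(X(\tilde s)-X(s))G(s,\tilde s)\,d\tilde s$ and Taylor-expand $G=\tfrac{-1}{2\sinh(\L/2)}\big(1+\tfrac12(|s-\tilde s|-\L/2)^2+o(\L^4)\big)$. The constant piece of $G$ contributes $\int(X(\tilde s)-X(s))\,d\tilde s=\L(\bar X-X(s))$; since $kN$ and $N$ are exact $s$-derivatives the $\bar X$-part drops out and, through the identity $\int(2\L k-4\pi)\langle N,\L(X-\bar X)\rangle\,ds=-2\L\D$, one recovers the favourable dissipative term. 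The terms carrying the curvature $k$ are the genuine obstacle: $k$ is not controlled at a useful order, so the bound must come from integrating by parts to trade the second-order factor $kN=X_{ss}$ for the tangent $T$, after which the remaining double integrals are estimated purely in terms of $\L$ and shown to be $o(\L^5)$. Using $\D\ge0$ and $\tfrac{\L}{2\sinh(\L/2)}\le1$, the outcome is
\[
\frac{d}{dt}\D\le-\frac{\L}{2\sinh(\L/2)}\,2\D+o(\L^4).
\]

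Finally I would rescale. From $\tfrac{d}{dt}\D_Y=e^{2t}\tfrac{d}{dt}\D+2\D_Y$ the inequality becomes
\[
\frac{d}{dt}\D_Y\le 2\D_Y\Big(1-\frac{\L}{2\sinh(\L/2)}\Big)+o(\L^4)e^{2t}.
\]
Here the coefficient $1-\tfrac{\L}{2\sinh(\L/2)}\sim\L^2/24$ is integrable in $t$ because $\L$ decays exponentially (Lemma \ref{LMexpdeclen}), and the inhomogeneous term $o(\L^4)e^{2t}$ is made summable by invoking the sharpened decay $\L(X)\le\L(X(t_\alpha))e^{-\alpha t}$ from Lemma \ref{alpha} with $\alpha=\tfrac34$, so that $o(\L^4)e^{2t}=o(e^{-t})$ past some $t_{3/4}$ depending only on $X(0)$. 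A Gronwall (Bellman) integration then gives $\D_Y(t)\le c(\vn{X(0)}_\infty)\,\D_{X(0)}$ uniformly in $t$, and lower semicontinuity of the deficit under the $C^0$ (indeed weak $H^1$) convergence $Y(t)\to Y_\infty$ passes this to $\D_{Y_\infty}\le c\,\D_{X(0)}$. I expect the curvature integration-by-parts step to be the main difficulty, since it is where one must convert an a priori uncontrolled quantity into something of the correct order in $\L$.
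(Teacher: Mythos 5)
Your proposal follows the paper's proof essentially step for step: the same derivation of $\tfrac{dA}{dt}=-\int_0^\L\langle X_t,N\rangle\,ds$ via $N_t=-\langle N,X_{ts}\rangle T$ and the identity $\partial_s(\langle X,N\rangle T-\langle X,T\rangle N)=-N$, the same Taylor expansion of $G$ isolating the leading term $-2\L\D$, the same integration by parts trading $kN$ for $T$ to show the curvature contributions are $o(\L^5)$, and the same rescaling to $\D_Y$ combined with Lemma \ref{alpha} (with $\alpha=\tfrac34$) to integrate the resulting differential inequality. The only (harmless) addition is your explicit appeal to lower semicontinuity of the deficit under the convergence $Y(t)\to Y_\infty$, which the paper leaves implicit.
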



\subsection{A chord-length estimate and embeddedness}

In this section we prove that if the flow is sufficiently embedded (relative to total length) at any time, it must remain embedded for all future times.
This holds also for the asymptotic profile.

We achieve this via a study of the \emph{squared chord-arc ratio}:
\[ \phi:=\frac{\C^2}{\mcs^2} \]
where, writing $s_1=s(u_1), s_2=s(u_2)$,
\begin{align*} 
\C(t)&:= |X(u_1,t)-X(u_2,t)|\ \  \text{ and}\\
\mcs &:= |s_1-s_2|	
\,.
\end{align*}
Recalling that 
\[ X_t=-X-\int XG\, d\tilde s = -X+\bar X -\int (X-\bar X) G\, d\tilde s \]
we find
\begin{align*}
\frac{d}{dt} \C^2 &= -2\C^2-2\left\langle X(u_1)-X(u_2),\int (X-\bar X) (G(s_1,s)-G(s_2,s))\, d\tilde s\right\rangle 
\end{align*}
and define 
\begin{align*}
	Q_0:=\left\langle X(u_1)-X(u_2),\int (X-\bar X) (G(s_1,s)-G(s_2,s))\, d\tilde s\right\rangle
\end{align*}
so that
\[ \dot \C=-\C- \frac{1}{\C}Q_0. \]
Assuming $s_2>s_1$ we have $\mcs=\int_{s_1}^{s_2} ds$ and from $\frac{d}{dt}ds=\langle X_{ts},X_s \rangle ds$ we obtain 
\[
 \dot  \mcs =\int_{s_1}^{s_2} \langle -T-T\ast G,T\rangle ds=-\int_{s_1}^{s_2}ds-\int_{s_1}^{s_2}\langle T\ast G,T\rangle ds
\,.
\]
Now let 
\[
 Q_1 := \int_{s_1}^{s_2}\langle T\ast G,T\rangle ds
\]
and then 
\[
 \dot S =-\mcs -Q_1 
\,.
\]
Therefore the time evolution of the squared chord-arc ratio is given by
\[
 \dot \phi = \frac{2\C \dot \C}{S^2}-2\frac{C^2\dot \mcs}{S^3}=2\phi \frac{\dot \C}{\C}-2\phi\frac{\dot \mcs}{\mcs}=-2\phi \frac{Q_0}{\C^2}+2\phi \frac{Q_1}{\mcs}
\,.
\]
Using the estimates (that follow via Poincar\'e and \eqref{eq:TGbound})
\begin{align*}
	|Q_0|&\leq \frac{\L^2}{2} \C \mcs \\
|Q_1|&\leq\frac{\L^2}{2} \mcs 
\end{align*} 
and recalling the length decay estimate \eqref{eq:Lexp} we see that
\begin{align*}
\frac{d}{dt}\phi &= -2\phi \frac{Q_0}{\C^2}+2\phi \frac{Q_1}{\mcs}
\\
&\geq -\L^2\sqrt\phi(1+\sqrt\phi)
\,.
\end{align*}
Therefore
\[
\sqrt\phi' \ge -\frac12\L^2\sqrt\phi - \frac12\L^2
\,.
\]
Lemma \ref{LMexpdeclen}, and choosing the appropriate $\varepsilon$ in the proof of Lemma \ref{lem:gradientinequality}, implies that
\[
\L(t) \le \L(0)e^{-\beta(X_0) t}
\]
where $\beta(X_0) = 1/\sqrt{2+\vn{X_0}_\infty^2}$.
	
Now let us impose the following hypothesis on $X_0$:
\begin{equation}
\label{EQhypo}
\inf_{s\in[0,\L_0]} \sqrt{\phi_0}(s) 
> \frac{L_0^2}{4\beta(X_0)} e^{\frac{L_0^2}{4\beta(X_0)}}
\,.
\end{equation}

We calculate
\begin{align*}
\frac{d}{dt}\bigg(
	e^{\frac12\int_0^t \L^2(\tau)\,d\tau}\sqrt\phi
\bigg)
	&\ge
		e^{\frac12\int_0^t \L^2(\tau)\,d\tau}\Big(-\frac12\L^2\Big)
\\
	&\ge
		-\frac12\L_0^2e^{-2\beta(X_0)t + \frac12\int_0^t \L_0^2e^{-2\beta(X_0)\tau}\,d\tau}
\\
	&\ge
		-\frac12\L_0^2e^{-2\beta(X_0)t + \frac{\L_0^2}{4\beta(X_0)}}
	=
		-\frac12\L_0^2e^{\frac{\L_0^2}{4\beta(X_0)}}e^{-2\beta(X_0)t}
\,.
\end{align*}
Integration gives
\[
	e^{\frac12\int_0^t \L^2(\tau)\,d\tau}\sqrt\phi
	\ge \sqrt{\phi_0} - \frac{\L_0^2}{4\beta(X_0)} e^{\frac{\L_0^2}{4\beta(X_0)}}
\,.
\]
By hypothesis \eqref{EQhypo} the RHS is positive, and so the function $\sqrt{\phi}$ can never vanish.
Since the chord-arc length ratio is scale-invariant, the same is true for the asymptotic profile $Y$.
Moreover, the hypothesis \eqref{EQhypo} may be satisfied simply by scaling any
embedded initial data (again, $\phi$ is scale-invariant, but the RHS of
\eqref{EQhypo} is not).
Thus we have the following result:

\begin{proposition}
\label{PNchordarc}
Let $X$ be an $H^1(ds)$ curve shortening flow.
Suppose $X_0\in \imm^1$ satisfies
\begin{equation*}
\inf_{s\in[0,\L_0]} \frac{\C(s)}{\mcs(s)} 
> \frac{L_0^2\sqrt{2+\vn{X_0}_\infty^2}}{4} e^{\frac{L_0^2\sqrt{2+\vn{X_0}_\infty^2}}{4}}
\end{equation*}
where, writing $s_1=s(u_1), s_2=s(u_2)$,
\begin{align*} 
\C(t) := |X(u_1,t)-X(u_2,t)|\ \  \text{ and}\ \ \ \mcs := |s_1-s_2|	
\,.
\end{align*}
Then $X$ (as well as its asymptotic profile and limit $Y_\infty$) is a family of embeddings.
\end{proposition}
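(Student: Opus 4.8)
The plan is to track the \emph{squared chord-arc ratio} $\phi := \C^2/\mcs^2$ and to show that, under the stated hypothesis, $\phi$ stays bounded below by a positive constant for all time; since embeddedness at a pair $(u_1,u_2)$ of distinct parameters is exactly the statement $\C>0$, a uniform positive lower bound on $\phi$ immediately yields that each $X(t)$ is an embedding. First I would rewrite the flow as $X_t = -X + \bar X - \int (X-\bar X)G\,d\tilde s$, using $\int_0^\L G\,ds = -1$ to extract the mean $\bar X$; this is the form best suited to estimating the differences $X(u_1)-X(u_2)$, since additive constants drop out. Differentiating $\C^2 = |X(u_1)-X(u_2)|^2$ and $\mcs=|s_1-s_2|$ in $t$ (the latter via $\tfrac{d}{dt}ds = \ip{X_{ts},X_s}\,ds$) produces two transport-type correction terms, which I would name $Q_0$ and $Q_1$, and gives the logarithmic-derivative identity
\[
\dot\phi = -2\phi\,\frac{Q_0}{\C^2} + 2\phi\,\frac{Q_1}{\mcs}\,.
\]

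The crux of the argument, and what I expect to be the main obstacle, is the pair of estimates $|Q_0| \le \tfrac{\L^2}{2}\C\mcs$ and $|Q_1|\le \tfrac{\L^2}{2}\mcs$. These should follow from the Green's-function derivative bound $|G_s|\le \tfrac12$ (as in \eqref{eq:TGbound}) together with a Poincar\'e-type control that converts the integrals against $G$ into factors of $\mcs$ and $\C$ carrying exactly the right homogeneity. The delicate point is arranging the powers so that the resulting differential inequality \emph{closes} in $\phi$ alone, decoupled from the remaining curve geometry. Inserting these bounds gives
\[
\dot\phi \ge -\L^2\sqrt\phi\,(1+\sqrt\phi)\,,
\qquad\text{hence}\qquad
(\sqrt\phi)' \ge -\tfrac12\L^2\sqrt\phi - \tfrac12\L^2\,.
\]

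With the differential inequality in hand, the remainder is a Gronwall/integrating-factor computation driven by the exponential decay of length. I would invoke Lemma \ref{uniformLinf} and, choosing the optimal $\varepsilon$ in the proof of Lemma \ref{lem:gradientinequality}, the refined decay $\L(t)\le \L_0 e^{-\beta(X_0)t}$ with $\beta(X_0)=1/\sqrt{2+\vn{X_0}_\infty^2}$; this makes $\int_0^\infty \L^2\,d\tau$ finite and bounds the total accumulated loss. Multiplying by the integrating factor $e^{\frac12\int_0^t\L^2\,d\tau}$ and integrating yields
\[
e^{\frac12\int_0^t\L^2(\tau)\,d\tau}\sqrt\phi
\ge \sqrt{\phi_0} - \frac{\L_0^2}{4\beta(X_0)}\,e^{\frac{\L_0^2}{4\beta(X_0)}}\,.
\]
Since $\beta(X_0)^{-1} = \sqrt{2+\vn{X_0}_\infty^2}$ and $\sqrt{\phi_0}=\C_0/\mcs_0$, the right-hand side is precisely the hypothesis \eqref{EQhypo} (equivalently the condition in the statement), hence strictly positive. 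Thus $\sqrt\phi$ stays above a fixed positive constant for all $t$ and $X(t)$ remains embedded.

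Finally, because $\phi$ is scale-invariant the identical lower bound holds for the exponential rescaling $Y$, and passing to the $C^0$ limit furnished by Theorem \ref{TMprofilefullconv} transfers embeddedness to $Y_\infty$ as well. I would remark here that, as the right-hand side of \eqref{EQhypo} is \emph{not} scale-invariant while $\phi$ is, any embedded initial datum can be brought into the admissible regime simply by shrinking it, so the hypothesis is no genuine restriction beyond embeddedness.
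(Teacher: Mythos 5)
Your proposal follows the paper's own argument essentially step for step: the same squared chord-arc ratio $\phi$, the same decomposition of the flow through $\bar X$, the same correction terms $Q_0,Q_1$ with the bounds $|Q_0|\le \tfrac{\L^2}{2}\C\mcs$ and $|Q_1|\le\tfrac{\L^2}{2}\mcs$, the same differential inequality for $\sqrt\phi$, and the same integrating-factor/length-decay computation leading to the hypothesis \eqref{EQhypo}. The argument is correct and matches the paper's proof, including the closing scale-invariance remark for the asymptotic profile.
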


Proposition \ref{PNchordarc} together with Theorem \ref{TMglobCk} completes the proof of Theorem \ref{ckcharch} from the introduction. Moreover Theorem \ref{TMcurvatureprofile}, Proposition \ref{PNisodef} and Proposition \ref{PNchordarc} complete the proof of Theorem \ref{besttheorem}.


\bibliographystyle{plain}
\bibliography{gradient_flows}

\end{document}